\DeclareMathAlphabet{\mathpzc}{OT1}{pzc}{m}{it}
\numberwithin{equation}{section}
\begin{document}

\expandafter\let\expandafter\oldproof\csname\string\proof\endcsname
\let\oldendproof\endproof
\renewenvironment{proof}[1][\proofname]{%
	\oldproof[\scshape\hspace{1em}#1]%
}{\oldendproof}

\newtheoremstyle{mystyle_thm}% ⟨name ⟩ 
{6pt}% ⟨Space above ⟩
{6pt}% ⟨Space below ⟩
{\itshape}% ⟨Body font ⟩
{1em}% ⟨Indent amount ⟩2
{\scshape}% ⟨Theorem head font⟩
{.}% ⟨Punctuation after theorem head ⟩
{.5em}% ⟨Space after theorem head ⟩3
{}%

\newtheoremstyle{mystyle_def}% ⟨name ⟩ 
{6pt}% ⟨Space above ⟩
{6pt}% ⟨Space below ⟩
{}% ⟨Body font ⟩
{1em}% ⟨Indent amount ⟩2
{\scshape}% ⟨Theorem head font⟩
{.}% ⟨Punctuation after theorem head ⟩
{.5em}% ⟨Space after theorem head ⟩3
{}%

\theoremstyle{mystyle_thm}

\newtheorem{theorem}{Theorem}[section]
\newtheorem{lemma}[theorem]{Lemma}
\newtheorem{proposition}[theorem]{Proposition}
\newtheorem{corollary}[theorem]{Corollary}
\newtheorem{definition}[theorem]{Definition}
\newtheorem{Ass}[theorem]{Assumption}
\newtheorem{condition}[theorem]{Condition}

\theoremstyle{mystyle_def}

\newtheorem{example}[theorem]{Example}
\newtheorem{remark}[theorem]{Remark}
\newtheorem{SA}[theorem]{Standing Assumption}
\newtheorem{discussion}[theorem]{Discussion}
\newtheorem{remarks}[theorem]{Remark}
\newtheorem*{notation}{Remark on Notation}
\newtheorem{application}[theorem]{Application}

%Stochastic Intervals
\newcommand{\of}{[\hspace{-0.06cm}[}
\newcommand{\gs}{]\hspace{-0.06cm}]}

%Lebesgue Measure
\newcommand{\llambda}{{\mathchoice
		{\lambda\mkern-4.5mu{\raisebox{.4ex}{\scriptsize$\setminus$}}}
		{\lambda\mkern-4.83mu{\raisebox{.4ex}{\scriptsize$\setminus$}}}
		{\lambda\mkern-4.5mu{\raisebox{.2ex}{\footnotesize$\scriptscriptstyle\setminus$}}}
		{\lambda\mkern-5.0mu{\raisebox{.2ex}{\tiny$\scriptscriptstyle\setminus$}}}}}

%Indikator
\newcommand{\1}{\mathds{1}}

%Filtrations
\newcommand{\F}{\mathbf{F}}
\newcommand{\G}{\mathbf{G}}

\newcommand{\B}{\mathbf{B}}

%Martingale Measures
\newcommand{\M}{\mathcal{M}}

%Scalar Product
\newcommand{\la}{\langle}
\newcommand{\ra}{\rangle}

%Special Type of Quadratic Variation
\newcommand{\lle}{\langle\hspace{-0.085cm}\langle}
\newcommand{\rre}{\rangle\hspace{-0.085cm}\rangle}
\newcommand{\blle}{\Big\langle\hspace{-0.155cm}\Big\langle}
\newcommand{\brre}{\Big\rangle\hspace{-0.155cm}\Big\rangle}

%Coordinate Process
\newcommand{\X}{\mathsf{X}}

%Short Cuts
\newcommand{\tr}{\operatorname{tr}}
\newcommand{\N}{{\mathbb{N}}}
\newcommand{\cadlag}{c\`adl\`ag }
\newcommand{\on}{\operatorname}
\newcommand{\oP}{\overline{P}}
\newcommand{\oO}{\mathcal{O}}
\newcommand{\D}{\mathsf{D}} %{D(\mathbb{R}_+; \mathbb{R})}
\newcommand{\bx}{\mathsf{x}}
\newcommand{\bb}{\hat{b}}
\newcommand{\bs}{\hat{\sigma}}
\newcommand{\bv}{\hat{v}}
\renewcommand{\v}{\mathfrak{m}}
\newcommand{\ob}{\bar{b}}
\newcommand{\oa}{\bar{a}}
\newcommand{\os}{\widehat{\sigma}}
\renewcommand{\j}{\varkappa}
\newcommand{\scl}{\ell}
\newcommand{\Y}{\mathscr{Y}}
\newcommand{\Z}{\mathscr{Z}}
\newcommand{\T}{\mathcal{T}}
\newcommand{\con}{\mathsf{c}}
\newcommand{\nk}{\hspace{-0.25cm}{{\phantom A}_k^n}}
\newcommand{\nl}{\hspace{-0.25cm}{{\phantom A}_1^n}}
\newcommand{\nm}{\hspace{-0.25cm}{{\phantom A}_2^n}}
\newcommand{\n}{\hspace{-0.35cm}{\phantom {Y_s}}^n}
\newcommand{\nme}{\hspace{-0.35cm}{\phantom {Y_s}}^{n - 1}}
\renewcommand{\o}{\hspace{-0.35cm}\phantom {Y_s}^0}
\newcommand{\e}{\hspace{-0.4cm}\phantom {U_s}^1}
\newcommand{\z}{\hspace{-0.4cm}\phantom {U_s}^2}
\newcommand{\iii}{|\hspace{-0.05cm}|\hspace{-0.05cm}|}
\newcommand{\co}{\overline{\on{co}}}
\newcommand{\ovb}{\overline{b}}
\newcommand{\ova}{\overline{a}}
\newcommand{\s}{\mathfrak{s}}
\newcommand{\opsi}{\overline{\Psi}}
\newcommand{\ol}{\mathcal{L}}
\newcommand{\cW}{\mathscr{W}}
\newcommand{\cU}{\mathcal{U}}
\newcommand{\oD}{\overline{D}}
\newcommand{\ua}{\underline{a}}
\newcommand{\ou}{\overline{b}}
\newcommand{\uu}{\underline{b}}

%Typographical
\renewcommand{\epsilon}{\varepsilon}
%\renewcommand{\rho}{\varrho}

%Semimartingale laws
\newcommand{\fPs}{\fP_{\textup{sem}}}
\newcommand{\fPas}{\mathfrak{S}_{\textup{ac}}}
\newcommand{\rrarrow}{\twoheadrightarrow}
\newcommand{\cA}{\mathcal{A}}
\newcommand{\ocA}{\mathcal{U}}
\newcommand{\cR}{\mathcal{K}}
\newcommand{\cK}{\mathcal{K}}
\newcommand{\cQ}{\mathcal{Q}}
\newcommand{\cF}{\mathcal{F}}
\newcommand{\cE}{\mathcal{E}}
\newcommand{\cC}{\mathcal{C}}
\newcommand{\cD}{\mathcal{D}}
\newcommand{\bC}{\mathbb{C}}
\newcommand{\cH}{\mathcal{H}}
\newcommand{\bth}{\overset{\leftarrow}\theta}
\renewcommand{\th}{\theta}
\newcommand{\cG}{\mathcal{G}}
\newcommand{\fPasn}{\mathfrak{S}^{\textup{ac}, n}_{\textup{sem}}}
\newcommand{\CLM}{\mathfrak{M}^\textup{ac}_\textup{loc}}
\newcommand{\Sd}{\mathcal{S}^\textup{sp}_{\textup{d}}}
\newcommand{\Sc}{\mathcal{S}}
\newcommand{\Sac}{\mathcal{S}_\textup{ac}}
\newcommand{\A}{\mathsf{A}}
\newcommand{\Td}{\mathsf{T}^\textup{d}}
\renewcommand{\t}{\mathfrak{t}}
%\DeclareMathOperator*{\gr}{gr}

%Basic Stuff
\newcommand{\bR}{\mathbb{R}}
\newcommand{\nnabla}{\nabla}
\newcommand{\f}{\mathfrak{f}}
\newcommand{\g}{\mathfrak{g}}
\newcommand{\oconv}{\overline{\on{co}}\hspace{0.075cm}}
\renewcommand{\a}{\mathfrak{a}}
\renewcommand{\b}{\mathfrak{b}}
\renewcommand{\d}{\mathsf{d}}
\newcommand{\bS}{\mathbb{S}^d_+}
\newcommand{\p}{\mathsf{p}}
\newcommand{\dr}{r} %{\mathsf{r}}
\newcommand{\m}{\mathbb{M}}
\newcommand{\Q}{Q}
\newcommand{\usc}{\textit{USC}}
\newcommand{\lsc}{\textit{LSC}}
\newcommand{\q}{\mathfrak{q}}
\renewcommand{\X}{\mathscr{X}}
\newcommand{\W}{\mathscr{W}}
\newcommand{\fP}{\mathcal{P}}
\newcommand{\w}{\mathsf{w}}
\newcommand{\oM}{\mathsf{M}}
\newcommand{\oZ}{\mathsf{Z}}
\newcommand{\oK}{\mathsf{K}}
\renewcommand{\Re}{\operatorname{Re}}
\newcommand{\cCk}{\mathsf{c}_k}
\newcommand{\C}{\mathsf{C}}
\newcommand{\cP}{\mathcal{P}}
\newcommand{\oPi}{\overline{\Pi}}
\newcommand{\cI}{\mathcal{I}}

\renewcommand{\emptyset}{\varnothing}

\allowdisplaybreaks

\makeatletter
\@namedef{subjclassname@2020}{%
	\textup{2020} Mathematics Subject Classification}
\makeatother

 \title[Stochastic Optimal Control Problems with Measurable Coefficients and \(L_d\)-drift]{Stochastic Optimal Control Problems with \\ Measurable Coefficients and \(L_d\)-drift} %\\
\author[D. Criens]{David Criens}
\address{University of Freiburg, Ernst-Zermelo-Str. 1, 79104 Freiburg, Germany}
\email{david.criens@stochastik.uni-freiburg.de}

\keywords{%\vspace{1ex} 
	controlled stochastic differential equations; irregular coefficients; \(L_d\)-drift; Hamilton--Jacobi--Bellman equation; viscosity solution; relaxed controls; Markov selection principle}

\subjclass[2020]{35Q93; 49L12; 49L25; 60H30; 93E03}

\thanks{}
%\date{\today}

\begin{abstract}
	We consider controlled stochastic differential equations (SDEs) with measurable coefficients, a uniformly elliptic diffusion coefficient and an \(L_d\)-drift. No space-regularity will be assumed for the coefficients. 
	In this framework we investigate the relation of value functions, partial differential equations (PDEs) and operator semigroups.
	First, for a cost with infinite time horizon on a bounded domain, we identify the value function as \(L_{d_0}\)-viscosity solution to a Hamilton--Jacobi--Bellman equation and we establish quantitative regularity estimates. The constant \(d_0 \in (d/2, d)\) only depends on the space dimension \(d\), the ellipticity constants of the diffusion coefficient and the \(L_d\)-bound of the drift. To illustrate applications of these results, we provide a uniqueness theorem under an additional assumption on the diffusion coefficient, showing a stochastic representation, and we discuss stability of value functions. Second, we consider a cost with a finite time horizon, terminal and running terms. We show that the value function indexed over the terminal cost is a nonlinear semigroup on \(C_b (\bR^d)\) and we establish a regularization by noise effect, which shows that the semigroup regularizes lower semicontinuity to local H\"older continuity. Lastly, we relate the semigroup to a parabolic PDE, showing that it is an \(L_{d + 1}\)-viscosity solution, and we establish local in time and global in space quantitative regularity estimates. Our proofs for the regularity of the value functions, the \(C_b\)-Feller property of the semigroup and its regularization by noise effects are based on a strong Markov selection principle and analytic estimates for linear diffusions that were recently established by N.~V.~Krylov in a series of papers. We highlight that our method covers frameworks without uniqueness of the controlled SDEs, as well as the associated PDEs. 
\end{abstract}

\maketitle

\section{Introduction} 

In the theory of stochastic control, it is of fundamental interest to establish a precise connection between the value function, the associated Hamilton--Jacobi--Bellman (HJB) partial differential equation (PDE), and the nonlinear semigroup generated by the control problem. Such connections are well understood in the context of controlled stochastic differential equations (SDEs) with sufficiently regular coefficients of suitable growth, see, e.g., \cite{FS_06, krylov_80, Lions85_2, NL_82, N_15}. The main objective of this paper is to establish these relationships in a framework allowing for an unbounded drift and without assuming regularity in the space variable of the coefficients, opening the door for applications of the general theory of viscosity solutions to a broad class of stochastic control problems. 
To make our setting precise, we consider controlled \(\bR^d\)-valued SDEs of the type
\begin{align*}
	d X_t = b (\lambda_t, X_t) \, dt + \sqrt{a (\lambda_t, X_t)} \, d W_t,
\end{align*} 
with controls \((\lambda_t)_{t \geq 0}\) taking values in a compact Polish space \(\Lambda\), formalized in the relaxed control framework, under the following assumptions on the coefficients:
\begin{enumerate}
	\item[(a)] \(b,  a\) are measurable and \(\{ \lambda \mapsto (b (\lambda, x), a (\lambda, x)) \colon x \in K \}\) is equicontinuous for every compact set \(K \subset \bR^d\).
	\item[(b)] There exists a \(\delta \in (0, 1)\) such that \(\delta \|\xi\|^2 \leq \langle \xi, a (\lambda, x) \xi \rangle \leq \|\xi\|^2 / \delta\) for all \(\lambda \in \Lambda\) and \(x, \xi \in \bR^d\).
	\item[(c)] There exists a \(\b \in L_d (\bR^d)\) such that \(\| b (\lambda, x) \| \leq \b (x)\) for all \(\lambda \in \Lambda\) and \(x \in \bR^d\).
\end{enumerate}
We impose no regularity assumption on \(x \mapsto (b (\lambda, x), a (\lambda, x))\), allow for arbitrary dimension \(d \geq 2\), and consider a drift of \(L_d\)-type, which is a novelty in this context.
Within this control framework, we study value functions related to costs with infinite and finite time horizons. 
Let us explain our contributions in more detail. 

We start by considering an infinite time horizon with a bounded domain \(D\), satisfying the exterior ball condition, and the value function 
\[
v (x) := \inf_\lambda E \Big[ f (X_{\tau_D}) + \int_0^{\tau_D} g (\lambda_s, X_s) \, ds \mid X_0 = x \Big], \quad x \in \oD = D \cup \partial D, 
\] 
with \(\tau_D := \inf \{t \geq 0 \colon X_t \not \in D\}\) and bounded lower semicontinuous input functions \(f, g\). 
In classical frameworks with sufficiently regular coefficients, it is well-known (see, e.g., \cite{FS_06, Lions85_2}) that \(v\) is a classical viscosity solution to the Hamilton--Jacobi--Bellman Dirichlet problem
\begin{align} \label{eq: intro PDE}
- \inf_\lambda \big\{ \tfrac{1}{2} \on{tr} \big[ a (\lambda, x) \nabla^2 u (x) \big] + \langle b (\lambda, x), \nabla u (x) \rangle + g (\lambda, x) \big\} = 0, \quad u = f \text{ on } \partial D. 
\end{align}
We are interested in a comparable connection for our framework, using the concept of \(L_p\)-viscosity solutions from \cite{CCKS_96, CKS_00}, which is tailored to settings with measurable coefficients. In contrast to classical viscosity solutions, \(L_p\)-viscosity solutions are defined using the broader class of Sobolev test functions. 
Our first main result shows that there exists a constant \(d_0 \in (d/2, d)\), depending only on \(d, \delta\) and \(\| \b\|_{L_d (\bR^d)}\), such that for continuous~\(f\) and all \(p \geq d_0\), the value function \(v\) is an \(L_p\)-viscosity solution to \eqref{eq: intro PDE}.
We further show that, even when \(f\) is only lower semicontinuous, the value function \(v\) is locally Hölder continuous in the open set~\(D\). This phenomenon can be interpreted as a form of regularization by noise. Using recent results by S.~Koike and A.~{\'S}wi{\k{e}}ch~\cite{KS_22} and N.~V.~Krylov \cite{krylov_CPDE_20}, under the uniform exterior ball condition on~\(D\) and an additional assumption on \(a\), we also provide a uniqueness theorem for \(L_p\)-viscosity solutions that identifies the value function as the unique \(L_p\)-viscosity solution. We interpret this result as a stochastic representation theorem. Finally, again under the uniform exterior ball condition, we establish an explicit modulus of continuity for the value function~\(v\), expressed in terms of the input data of our framework and the modulus of continuity of \(f\). In particular, if \(f\) is H\"older continuous, then \(v\) is H\"older continuous on~\(\oD\), with an exponent depending only on \(d, \delta, \|\b\|_{L_d (\bR^d)}\) and the H\"older exponent of~\(f\). 
As an application of these quantitative regularity estimates, we also establish a stability result for stochastic optimal control problems.

As next step, we study value functions over a finite time horizon.
As demonstrated in the seminal work of M.~Nisio \cite{N_75,N_76a,N_76b}, control problems (with sufficiently regular coefficients) exhibit a fundamental connection to nonlinear semigroups. We are interested in the semigroup structure for our control framework with measurable coefficients and \(L_d\)-drift. Let \((S_t)_{t \geq 0}\) be the nonlinear operators defined by
\[
S_t (f) (x) := \inf_\lambda E \Big[ f (X_t) + \int_0^t g (\lambda_s, X_s) \, ds \mid X_0 = x \Big], \quad f \text{ bounded and measurable}, 
\] 
where \(g\) is a given bounded, lower semicontinuous function.
We show that \((S_t)_{t \geq 0}\) is a nonlinear semigroup on the space \(C_b (\bR^d)\), i.e., it satisfies the \(C_b\)-Feller property \(S_t (C_b (\bR^d)) \subset C_b (\bR^d)\) and the semigroup property \(S_{t + s} = S_t \circ S_s\) on \(C_b (\bR^d)\). We also establish a variant of the strong Feller property: for each \(t > 0\), the operator \(S_t\) maps bounded lower semicontinuous functions to locally Hölder continuous functions, exhibiting a regularization by noise effect. 
Further, making a step into the direction of an infinitesimal description, for any finite time horizon \(T > 0\), we show that  \(u (t, x) := S_{T - t} (f) (x)\) is an \(L_{d + 1}\)-viscosity solution to the backward PDE
\begin{align} \label{eq: parabolic PDE intro}
	\begin{cases}- \partial_t u -  \inf_\lambda \big\{ \tfrac{1}{2} \on{tr} \big[ a (\lambda, \cdot \, ) \nabla^2 u \big] + \langle b (\lambda, \cdot \,), \nabla u \rangle + g (\lambda, \cdot \,) \big\} = 0, & \text{ on } [0, T) \times \bR^d, \\ u (T, \cdot \,) = f, & \text{ on } \bR^d, \end{cases}
\end{align} 
which can be seen as generator equation for the semigroup \((S_t)_{t \geq 0}\); cf. \cite{CN22b,hol16,NL_82}. Similar as for the infinite time horizon case, we prove that uniform continuity is propagated from the terminal function \(f\) to the value function \(u \colon [0, T] \times \bR^d \to \bR\) and we describe its modulus of continuity in terms of the variables in our framework and the modulus of continuity of \(f\). In particular, if \(f\) is H\"older continuous, then \(u\) is H\"older continuous on \([0, T] \times \bR^d\) with a H\"older exponent that only depends on \(d, \delta, \|\b\|_{L_d(\bR^d)}\) and the H\"older exponent of \(f\).

Summarizing, our main results establish that value functions are \(L_p\)-viscosity solutions of the associated HJB equations with unbounded first order terms, without imposing regularity assumptions on the coefficients. In doing so, we resolve the existence problem for this class of equations, see~\cite{CKLS99} for a general existence result covering Isaacs' equations with bounded, measurable coefficients. We also provide quantitative regularity estimates, a uniqueness theorem, and investigate stability of value functions. General \(L_p\)-viscosity theory for equations with unbounded coefficients is currently undergoing active development and important results, such as weak Harnack and maximum principles, have recently been established, see, e.g., \cite{KS_09, KS_12, KS_22, KST_19, krylov_CPDE_20, N_09, N_19}. Our main results pave the way for applying this general theory to stochastic control problems.
 
Let us now comment on the methods of proof.
The core of the analysis concerns the regularity of the value function and the Feller properties of the associated semigroup. To address this, we linearize the control problem via a strong Markov selection and study regularity within this linearized setting, relying on analytic estimates recently developed by N.~V.~Krylov in a series of impressive papers \cite{krylov_PTRF_21, krylov_AOP_21, krylov_21, krylov_PA_23}.
The idea of obtaining regularity through a strong Markov selection was proposed in the recent work \cite{CN22b} within a one-dimensional framework of model uncertainty, where it was combined with results from \cite{SV} on one-dimensional martingale problems that, in particular, always satisfy uniqueness. In the present work, we develop this approach for our multidimensional relaxed control framework and strengthen it with analytic methods that remain applicable even in the absence of uniqueness. Moreover, we consider cost functions that depend on both the path and the control, an aspect that could not be addressed within the framework of \cite{CN22b}. As a byproduct of our analysis, we establish a strong Markov selection principle, which yields the existence of strong Markov optimal controls. We believe this result is new for our framework and of independent interest. 
Its proof builds upon pioneering work of N.~V.~Krylov \cite{krylov_selection}, see also \cite{CN22b, nicole1987compactification,Haus86,HausLep90} for approaches in control settings.
The main novel challenges we encounter concern compactness and continuity properties for the relaxed control framework with \(L_d\)-drift, the treatment of the stopping time \(\tau_D\), as well as the existence of a time-homogeneous selection.
Finally, to establish the connection between the semigroup \((S_t)_{t \geq 0}\) and the PDE \eqref{eq: parabolic PDE intro}, we provide an Itô formula for space-time Sobolev functions, along with a weak existence theorem for SDEs with time-dependent coefficients and \(L_d\)-drift.
 
Let us take the opportunity to briefly discuss two other strategies to relate value functions and PDEs. The first relies on establishing the existence of a Sobolev solution, which is then identified as the value function via a verification argument. The second is based on the theory of discontinuous viscosity solutions and a comparison principle: one studies the upper and lower envelopes of the value function and shows that they are (classical) viscosity sub- and supersolutions. A comparison principle for such discontinuous solutions then yields that the value function itself is a viscosity solution, with continuity as a consequence. A key distinction between these strategies and our method lies in their reliance on uniqueness properties. In the verification approach, it is known from~\cite{CCKS_96,KS_22} that the mere existence of a Sobolev solution already guarantees uniqueness of \(L_p\)-viscosity solutions, whereas in the comparison approach, uniqueness follows as a direct consequence of the comparison principle. In contrast, our method does not entail uniqueness and remains applicable even in situations where uniqueness may fail, as illustrated by N.~Nadirashvili’s \cite{zbMATH01140123} famous example (here, we recall that ``good solutions'' as considered in \cite{zbMATH01140123} are always \(L_d\)-viscosity solutions, see \cite{CCKS_96, JKS_02, J_96}). 
At this point, let us mention that our analysis provides some new insights for the linear situation without uniqueness. As we explain in Discussion~\ref{diss: 1 MR}~(b) below, optimization over solutions to SDEs provides the extremal solutions of linear Dirichlet problems that lack uniqueness. 
It is also worth noting that many uniqueness results in the literature rely on certain regularity assumptions for the diffusion coefficient or on bounds for its oscillation, see, e.g., \cite{CCKS_96, CKS_00, Jen_Sw_05, krylov_18, krylov_CPDE_20, W_09}. We impose no such conditions, and our results therefore remain applicable even in settings with highly irregular coefficients.

Lastly, let us discuss related literature. To the best of our knowledge, controlled diffusions with measurable coefficients and \(L_d\)-drift have not been studied before. We are only aware of three other papers that investigate controlled SDEs with measurable coefficients, namely \cite{F_25, nicole1987compactification, MPT_23}. Under comparable conditions than used in this paper, but with bounded drift coefficient \(b\), the seminal paper~\cite{nicole1987compactification} establishes a strong Markov selection principle and lower semicontinuity of the value function \((t, x) \mapsto S_t (f) (x)\).
The method for the lower semicontinuity is based on Berge's maximum theorem and upper hemicontinuity of a set-valued map associated to relaxed control rules. Our strategy reveals more regularity of \((t, x) \mapsto S_t (f) (x)\). 
The recent paper~\cite{MPT_23} studies the stochastic maximum principle in a setting with additive noise and bounded measurable drift. 
Finally, the most recent paper \cite{F_25} investigates \(L_p\)-viscosity and Sobolev solutions for the HJB equation related to the value function 
 \begin{align}\label{eq: infinite time horitin value}
	x \mapsto \inf_\lambda E \Big[ \int_0^\infty e^{- \rho s} g (\lambda_s, X_s) \, ds \mid X_0 = x \Big],
\end{align} 
using the verification approach outlined above. 
In Discussion~\ref{diss: 1 MR}~(d) below, we explain that our method also applies to this value function, providing new conditions for the relation of \eqref{eq: infinite time horitin value} to an HJB equation.
The semigroup connection for controlled SDEs was first revealed by M.~Nisio \cite{N_75,N_76a,N_76b}. More recently, this connection was further investigated in \cite{BDLM_25, C_24_JMAA, CN22b, CN_25_EJP, GNR_24, hol16}. 
A version of the strong Feller property of nonlinear semigroups has been introduced in \cite{CN22b}. 
Our results seem to be the first for controlled SDEs with merely measurable coefficients and unbounded first order terms.

The paper is structured as follows. In Section~\ref{sec: control} we introduce our control framework. The main results for the elliptic setting are given in Section~\ref{sec: elliptic} and the semigroup connection is studied in Section~\ref{sec: parabolic}. The remaining Sections \ref{sec: pf SMS}--\ref{sec: pf parabolic viscosity} are devoted to the proofs of our main results. We highlight the strong Markov selection principle that is given in Section~\ref{sec: SMSP}.

\section{Value functions, HJB equations and nonlinear semigroups} \label{sec: main1}

\subsection{The control framework} \label{sec: control}

Let \(\Lambda\) be a compact Polish space and take a space dimension \(d \geq 2\).
The set of all symmetric positive semidefinite \(d \times d\) matrices with real entries is denoted by \(\bS\). The coefficients of the control problem under investigation are the two functions 
\begin{align*}
& a \colon\Lambda \times \bR^d \to \bS, \\
& b \colon \Lambda \times \bR^d \to \bR^d.
\end{align*}

Throughout this paper, we impose the following assumptions on the coefficients. 

\begin{SA} \label{SA: main1}
	\begin{enumerate}
		\item[(a)] \(a, b\) are Borel functions and, for every compact set \(K \subset \bR^d\), the family \[\{ \lambda \mapsto (a (\lambda, x), b (\lambda, x)) \colon x \in K\}\] is equicontinuous.
		\item[(b)] \(a\) is bounded and uniformly elliptic in the sense that there exists a \(\delta \in (0, 1)\) such that
		\[
		a (\lambda, x) \in \mathbb{S}_\delta := \Big\{ A \in \mathbb{S}^d_+ \colon \delta \, \|\xi\|^2 \leq \langle \xi, A \xi \rangle \leq \frac{\|\xi\|^2}{\delta}, \ \ \forall \, \xi \in \bR^d\Big\}
		\] 
		for all \((\lambda, x) \in \Lambda \times \bR^d\).
		\item[(c)] There exists a \(\b \in L_d (\bR^d)\) such that \(\|b (\lambda, x)\| \leq \b (x)\) for all \((\lambda, x) \in \Lambda \times \bR^d\). 
	\end{enumerate}
\end{SA}

We now introduce our control setup, using the so-called relaxed control framework (\cite{EKNJ88,nicole1987compactification, ElKa15, HausLep90}). For a comparison with the {\em weak} and {\em strong} control frameworks, we refer to the articles \cite{nicole1987compactification,ElKa15}.

Let \(\Omega\) be the space of continuous functions \(\omega \colon \bR_+ \to \bR^d\) and endow it with the local uniform topology. The coordinate process on \(\Omega\) is denoted by \(X\), i.e., \(X_t (\omega) = \omega (t)\) for \(\omega \in \Omega\) and \(t \in \bR_+\). Further, \(\cF\) and \((\cF_t)_{t \geq 0}\) are the canonical \(\sigma\)-field and filtration generated by \(X\), i.e., \(\cF := \sigma (X_t, t \geq 0)\) and \(\cF_t := \sigma (X_s, s \leq t)\).

Let \(\m\) to be set of all Radon measures on \(\bR_+ \times \Lambda\) whose projections to \(\bR_+\) coincide with the Lebesgue measure. 
We endow \(\m\) with the vague topology, which turns it into a compact Polish space (\cite[Theorem 2.2]{EKNJ88}). The identity map on \(\m\) is denoted by \(M\), i.e.,
\[
M (ds, d\lambda) (m) := m (ds, d \lambda), \quad m \in \m.
\]
Define the \(\sigma\)-field \[\M := \sigma (M_t (\phi); t \in \bR_+, \phi \in C_{c} (\bR_+ \times \Lambda)),\] where
\[
M_t (\phi) := \int_0^t \int \phi (s, \lambda) \, M(ds, d\lambda).
\]
At this point, we emphasize that \((M_t (\phi))_{t \geq 0}\) is a stochastic process on the space \(\m\).
In the following, we work with the product space \((\Omega \times \m, \cF \otimes \M)\). Adapting our above notation, we denote the coordinate process on this space by \((X, M)\). 

For \(\varphi \in C^2_b (\bR^d)\), define
\begin{align}\label{eq: operator L}
	L (\lambda, x, \nabla \varphi (x), \nabla^2 \varphi (x)) &:= \tfrac{1}{2} \on{tr} \big[ a (\lambda, x) \nabla^2 \varphi (x) \big] + \langle b (\lambda, x), \nabla \varphi (x) \rangle, \quad x \in \bR^d,
\end{align}
and 
\begin{align} \label{eq: test process}
C_t (\varphi) := \varphi (X_t) - \int_{0}^{t \wedge \tau_D} \int  L (\lambda, X_s, \nabla \varphi (X_s), \nabla^2 \varphi (X_s)) \, M (ds , d \lambda), \quad t \in \bR_+.
\end{align} 
A \emph{relaxed control rule} with initial value \((t, x) \in \bR_+ \times \bR^d\) is a probability measure \(P\) on the product space \((\Omega \times \m, \mathcal{F} \otimes \M)\) with \(P (X = x \text{ on } [0, t]) = 1\) such that the processes \((C_s (\varphi))_{s \geq t}\), with \(\varphi \in C^2_b (\bR^d)\), are \(P\)-martingales for the filtration
\[
\cG_s := \sigma (X_r, M_r (\phi); r \leq s, \phi \in C_c (\bR_+ \times \Lambda)), \quad s \in \bR_+.
\]
Finally, for \((t, x) \in \bR_+ \times \bR^d\), we define 
\begin{align*}
	\cK (t, x)&:= \big\{ \text{all relaxed control rules with initial value } (t, x) \big \}, \quad
	\cR (x) := \cK (0, x).
\end{align*}

\begin{remark} 
Thanks to \cite[Theorem~1.1]{krylov_AOP_21}, we have \(\cK (x) \neq \emptyset\), see Lemma~\ref{lem: upper hemi} below for more properties of the set-valued map \((t, x) \mapsto \cK (t, x)\). The \(L_d\)-condition on the drift is the sharpest \(L_p\)-assumption ensuring \(\cK(x) \neq \emptyset\). Let us explain this with an example. Similar to \cite[Example~1.1]{krylov_AOP_21}, we notice that the~SDE 
\begin{align} \label{eq: SDE no solution} 
d Y_t = - \frac{dY_t}{2\| Y_t \|^2} \1_{\{\|Y_t\| \leq 1\}} \, dt + d W_t, \quad Y_0 = 0, 
\end{align} 
has no solution. To see this, assume for contradiction that \(Y\) is a solution. By It\^o's formula, it follows that
\[
d \| Y_{t} \|^2 = d (1 - \1_{\{\|Y_t\| \leq 1\}}) \, dt + 2 \langle Y_t, d W_t \rangle.
\] 
Hence, for \(T := \inf \{t \geq 0 \colon \|Y_t\| \geq 1\} > 0\), the stopped process \(\|Y_{\cdot \wedge T}\|^2\) is a non-negative local martingale, hence a supermartingale. As supermartingales cannot resurrect from zero, we must have a.s. \(Y = 0\) on \([0, T]\). However, this means that \(W_{\cdot \wedge T}\) is of finite variation, which is absurd. Consequently, the SDE \eqref{eq: SDE no solution} cannot have a solution. 
Let us have a look at the integrability properties of the drift coefficient \(b (x) = - d x / (2\|x\|^2) \1_{\{\|x\| \leq 1\}}\).
The computation 
\[
\int_{\bR^d} \| b (x) \|^{d - \varepsilon} \, dx = C_d \int_0^1 r^{\varepsilon - d} \, r^{d - 1} \, dr = C_d \int_0^1 r^{- 1 + \varepsilon} \, dr
\] 
shows that \(b \in L_{d - \varepsilon} (\bR^d)\) for every \(\varepsilon \in (0, d)\), but \(b \not \in L_d(\bR^d)\).  As the SDE \eqref{eq: SDE no solution} is covered by our setting, the \(L_d\)-drift condition is sharp. 
\end{remark} 

Thanks to our assumptions on the coefficients \(a\) and \(b\), \cite[Corollary~2.7]{krylov_21} yields the existence of two constants \(N, \nu > 0\), depending only on \(d, \delta\) and \(\|\b\|_{L_d (\bR^d)}\), such that, for every bounded open domain \(D \subset \bR^d\), 
\begin{equation} \label{eq: bound stopping time}
	\begin{split}
		\sup_{x \in \bR^d} \sup_{P \in \cR (x)} E^P \Big[ e^{ \nu \tau_D / \on{diam} \, (D)^2 } \Big] \leq N, 
	\end{split}
\end{equation} 
where \(\tau_D := \inf \{t \geq 0 \colon X_t \not \in D\}\) denotes the first exit time from \(D\). This integrability is useful to define value functions in elliptic settings, which is the topic of the next section.

\subsection{HJB Dirichlet problems} \label{sec: elliptic}
Let \(D \subset \bR^d\) be an open bounded domain. As usual, the boundary of \(D\) is denoted by \(\partial D := \overline{D} \setminus D\), where \(\overline{D}\) denotes the closure in \(\bR^d\). Recall that \(D\) is said to satisfy the exterior ball condition if for every \(x \in \partial D\) there exists a radius \(\rho (x) > 0\) and a center \(z \not \in \oD\) such that \(\overline{B}_{\rho (x)} (z) \cap \oD= \{x\}\), where \(\overline{B}_{\rho (x)} (z) := \{ y \in \bR^d \colon \|y - z\| \leq \rho (x)\}\). Further, \(D\) is said to satisfy the uniform exterior ball condition if \(\rho(x)\) can be taken uniformly for all \(x \in \partial D\). 
In addition to the coefficients \(a\) and \(b\) from the previous section, we consider
\begin{align*}
& g \colon \Lambda \times \overline{D} \to \bR, \\
& f \colon \partial D\to \bR, \phantom {\overline{D}}
\end{align*}
and impose the following:

\begin{SA} \label{SA: main2}
	\begin{enumerate}
		\item[(a)]  \(D\) satisfies the exterior ball condition.
		\item[(b)] \(g, f\) are bounded lower semicontinuous functions.
	\end{enumerate}
\end{SA}
Throughout this section, the Standing Assumptions~\ref{SA: main1} and \ref{SA: main2} are in force.
For \(x \in D, \lambda \in \Lambda\) and \(\varphi \in C^2 (D)\), we define 
\begin{align*}
	H (x, \nabla \varphi (x), \nabla^2 \varphi (x)) &:=  \inf_{\lambda \in \Lambda} \big\{ L (\lambda, x, \nabla \varphi (x), \nabla^2 \varphi (x)) + g (\lambda, x)  \big\},
\end{align*}
where \(L\) is given by \eqref{eq: operator L}. Take a constant \(p > d/2\).
We are interested in  \(L_p\)-viscosity solutions for the Hamilton--Jacobi--Bellman equation
	\begin{align} \label{eq: PDE}
	- H (x, \nabla u (x), \nabla^2 u (x)) = 0, \quad x \in D.
\end{align} 

\begin{definition}
	A continuous function \(u \colon \oD \to \bR\) is called an {\em \(L_p\)-viscosity subsolution} (resp. {\em \(L_p\)-viscosity supersolution}) to \eqref{eq: PDE}
	if for every \(\varphi \in W^{2}_{p, \textup{loc}} (D)\) such that \(u - \varphi\) has a local maximum (resp., a local minimum) in \(x_0 \in D\), we have 
	\begin{align*}
	&\on{ess}\limsup\nolimits_{x \to x_0} (H (x, \nabla \varphi (x), \nabla^2 \varphi (x)) \geq 0, \\
	(\hspace{-0.05cm}&\on{ess}\liminf\nolimits_{x \to x_0} (H (x, \nabla \varphi (x), \nabla^2 \varphi (x)) \leq 0), 
	\end{align*} 
	The function \(u\) is an {\em \(L_p\)-viscosity solution} to \eqref{eq: PDE} if it is both an \(L_p\)-visocisty sub- and supersolution.
\end{definition}

\begin{remark} \label{rem: strict maximum}
	As usual, 
	for subsolutions we may restrict our attention to test functions \(\varphi\) with \(\varphi (x_0) = u (x_0)\) such that \(u - \varphi\) has a {\em strict} local maximum in \(x_0\); see \cite[Proposition~4.1]{KS_22}. A similar remark holds for supersolutions.
\end{remark} 

For \(d/2 < q < p\), any \(L_q\)-viscosity (sub, super) solution is also an \(L_p\)-viscosity (sub, super) solution, simply because more test functions are taken into account.
There is typically an equation dependent range of admissible \(p\), in our case depending on the parameters \(d, \delta\) and \(\|\b\|_{L_d (\bR^d)}\). The requirement \(p > d / 2\) ensures that the test functions from \(W^{2}_{p, \textup{loc}} (D)\) are continuous and that its derivatives can be defined in a pointwise (and a distributional) sense (see p. 371 in \cite{CCKS_96}).

The classical definition of viscosity solutions (also often called \(C\)-viscosity solutions) uses the smaller class \(C^2 (D)\) as test functions. It is therefore clear that any \(L_p\)-viscosity solution is also a \(C\)-viscosity solution. In case the coefficients are continuous, any \(C\)-viscosity solution is also an \(L_p\)-viscosity solution, see \cite[Proposition~2.9]{CCKS_96}. 

We highlight the regularity of the solution: \(L_p\)-sub- and \(L_p\)-supersolutions are both assumed to be continuous, while \(C\)-viscosity subsolutions (resp., supersolutions) are typically only assumed to be upper (resp., lower) semicontinuous.

\smallskip

Recalling \eqref{eq: bound stopping time}, we define
\begin{align} \label{eq: value function D}
	v (x) := \inf_{P \in \cR(x)} E^P & \Big[ f ( X_{\tau_D}) + \int_0^{\tau_D} \int g (\lambda, X_t) \, M (dt, d \lambda)\Big], \quad x \in \oD.
\end{align}
This function is called {\em value function}. We also notice that \(v\) is bounded by the boundedness of \(f, g\) and \eqref{eq: bound stopping time}. 
We now come to our first main results. The proofs for the following Theorems~\ref{theo: main continuity} and \ref{theo: main Holder continuity} are given in Section~\ref{sec: pf SMS} below.

\begin{theorem}\label{theo: main continuity} 
	Let \(f\) be continuous. There exists a constant \(d_0 = d_0 (d, \delta, \|\b\|_{L_d (\bR^d)}) \in (d /2, d)\) such that, for every \(p \geq d_0\), the value function \(v\) is an \(L_p\)-viscosity solution to the equation \eqref{eq: PDE}.
\end{theorem}

We can say more about the regularity of \(v\).

\begin{theorem} \label{theo: main Holder continuity}
\begin{enumerate} 
	\item[\textup{(a)}] There exists an \(\alpha = \alpha (d, \delta, \|\b\|_{L_d (\bR^d)}) \in (0, 1)\) such that \(v\) is locally \(\alpha\)-H\"older continuous in the open set~\(D\). 
	\item[\textup{(b)}] 
	If \(D\) satisfies the uniform exterior ball condition and \(f\) is continuous, then \(v\) is continuous on \(\oD\) with modulus of continuity \(w\) given by 
	\[
	w ( \rho ) = C \big( \rho^{\alpha / 2} + w_f (C \rho^{\alpha / 4}) \big), \quad \rho \geq 0, 
	\] 
	where \(w_f\) is a concave, increasing modulus of continuity for \(f\),\footnote{such a modulus of continuity does always exist, see \cite[Proposition~3.15]{DN_11}} and the constant \(C > 0\) depends only on \(d, \delta, \|\b\|_{L_d(\bR^d)}, \on{diam} (D), \|g\|_\infty, \sup_{\oD} | v |\), and the radius from the uniform exterior ball condition. 
	\item[\textup{(c)}]
	If \(D\) satisfies the uniform exterior ball condition and \(f\) is \(\beta\)-H\"older continuous, then \(v\) is \(\alpha \beta / 4\)-H\"older continuous on \(\oD\). More precisely, for all \(x, y \in \oD\), 
	\[
	| v (x) - v (y) | \leq C \| x - y \|^{\alpha \beta / 4},
	\] 
	where the constant \(C > 0\) depends only on \(d, \delta, \|\b\|_{L_d(\bR^d)}, \on{diam}(D), \|g\|_\infty, \sup_{\oD} |v|\), the H\"older constant of \(f\), and the radius from the uniform exterior ball condition. 
	\end{enumerate}
\end{theorem}

The next result is a uniqueness theorem, which follows from results in the recent works~\cite{KS_22,krylov_CPDE_20}.
More specifically, in a general nonlinear framework, N.~V.~Krylov \cite{krylov_CPDE_20} proved an existence result for Sobolev solutions, and S.~Koike and A.~{\'S}wi{\k{e}}ch \cite{KS_22} explained that (under suitable structure conditions) the existence of Sobolev solutions already implies uniqueness among \(L_p\)-viscosity solutions.\footnote{The examples in \cite[Remark~1.3]{krylov_CPDE_20} show that the structure conditions (4.1) and (4.2) from \cite{KS_22} are crucial for the duality between existence of Sobolev solutions and uniqueness of \(L_p\)-viscosity solutions.}  Together with Theorem~\ref{theo: main continuity}, we obtain a stochastic representation result for the Dirichlet problem~\eqref{eq: PDE}, identifying the value function as the unique \(L_p\)-viscosity solution. 

\begin{theorem} \label{theo: uniqueness} 
	Take \(p \in (d_0, d)\) and let \(\theta = \theta (d, p, \delta, \|\b\|_{L_d (\bR^d)}) > 0\) be the constant from \cite[Theorem~1.2]{krylov_CPDE_20}. Assume that \(D\) satisfies the uniform exterior ball condition and that there exists an \(R_0 > 0\) such that, for every \(z \in D, r \in (0, R_0]\), there exists a continuous map \(\Lambda \ni \lambda \mapsto \overline{a}_{z, r} (\lambda)\) such that 
	\begin{align} \label{eq: uniqueness condition} 
	\frac{1}{\mu_L (B_r (z))} \int_{B_r (z)} \sup_{\lambda \in \Lambda} \| a (\lambda, x) - \overline{a}_{z, r}(\lambda) \| \, dx \leq \theta, 
	\end{align} 
	where \(\mu_L\) denotes the Lebesgue measure.
	Then, whenever \(f \in C (\partial D)\), the value function \(v\) is the unique \(L_p\)-viscosity solution to 
	\begin{align*}
	\begin{cases} 
		- H (x, \nabla u (x), \nabla^2 u (x)) = 0, & \text{on } D, \\ u = f, & \text{on } \partial D, 
	\end{cases} 
	\end{align*} 
	 and \(v \in W^{2}_{p, \textup{loc}} (D) \cap C (\oD)\).
\end{theorem}

\begin{proof}
	The result follows from Theorem~\ref{theo: main continuity}, \cite[Theorem~1.2]{krylov_21}, see \cite[Corollary~4.1.4]{krylov_18} for the hypothesis, and \cite[Proposition~4.5]{KS_22}. 
\end{proof}

\begin{remark}
The condition \eqref{eq: uniqueness condition} is for example satisfied if \(\{ x \mapsto a (\lambda, x) \colon \lambda \in \Lambda\}\) is uniformly equicontinuous. In this case, \(R_0 > 0\) can be taken small enough such that 
\[
\| a (\lambda, x) - a (\lambda, y) \| \leq \theta, \quad \|x - y\| \leq R_0, \ \lambda \in \Lambda,
\]
and \(\overline{a}_{z, r} (\lambda) := a (\lambda, z)\). 

\end{remark} 

The quantitative regularity estimates from Theorem~\ref{theo: main Holder continuity} are useful in the context of stability, allowing us to apply the Arzel\`a--Ascoli theorem. 

\begin{theorem}
Assume that \(D\) satisfies the uniform exterior ball condition, take a sequence \((b^m, a^m, f^m, g^m)_{m = 1}^\infty\) of coefficients satisfying the same structural assumptions as \((b, a, f, g)\) above, and set 
\[
H^m (x, \nabla \varphi (x), \nabla^2 \varphi (x)) := \inf_{\lambda \in \Lambda} \Big\{ \tfrac{1}{2} \on{tr} \big[ a^m (\lambda, x) \nabla^2 \varphi (x) \big] + \langle b^m (\lambda, x), \nabla \varphi (x) \rangle + g^m (\lambda, x) \Big\}.
\] 
Take \(p \in (d_0, d)\) and assume the following:
\begin{enumerate}
	\item[\textup{(a)}] \(\sup_{m \geq 1} \| g^m \|_\infty < \infty\), \(\sup_{m \geq 1} \| f^m \|_\infty < \infty\) and that there exists a modulus of continuity \(w\) such that 
	\[
	| f^m (x) - f^m (y) | \leq w (\|x - y\|), \quad x, y \in \partial D, \ m \geq 1;
	\] 
	\item[\textup{(b)}] for all \(B_r (x_0) \subset \subset D\) and \(\varphi \in W^2_p (B_r (x_0))\), 
	\[
	\| g - g^m  \|_{L_p (B_r (x))} \to 0, 
	\] 
	where
	\begin{align*} 
	g (x) &:= H (x, \nabla \varphi (x), \nabla^2 \varphi (x)), \\
	g^m (x) &:= H^m (x, \nabla \varphi (x), \nabla^2 \varphi (x)); 
	\end{align*} 
	\item[\textup{(c)}] \(f^m (x) \to f (x)\) for all \(x \in \partial D\). 
\end{enumerate}
Let \((u^m)_{m = 1}^\infty\) be the value functions associated to \((b^m, a^m, f^m, g^m)_{m = 1}^\infty\). Then, \((u^m)_{m = 1}^\infty\) is relatively compact in \(C (\oD)\) and any accumulation point is an \(L_p\)-viscosity solution to \eqref{eq: PDE} that coincides with \(f\) on \(\partial D\). 
Finally, under the hypothesis of Theorem~\ref{theo: uniqueness}, \(u\) is the unique \(L_p\)-viscosity solution to \eqref{eq: PDE} and \(u = v\). 
\end{theorem} 
\begin{proof} 
By Theorem~\ref{theo: main Holder continuity} and the Arzel\`a--Ascoli theorem (\cite[Theorem~A.5.2]{Kallenberg}), \((u^m)_{m = 1}^\infty\) is relatively compact in \(C (\oD)\). Let \(u\) be an accumulation point. By (c), we have \(u = f\) on \(\partial D\). 
Hence, it is left to prove that \(u\) is an \(L_p\)-viscosity solution. This follows from \cite[Proposition~4.6]{KS_22}.
The uniqueness statement is due to Theorem~\ref{theo: uniqueness}.
\end{proof}

\begin{discussion} \label{diss: 1 MR}
	(a) To the best of our knowledge, for truly nonlinear settings, value functions of controlled diffusions with \(L_d\)-drift have not been studied before. Even in settings with bounded coefficients (that are included in our setting by the boundedness of the domain \(D\); take \(b (\lambda, x) = 0\) for \(x \not \in \oD\)), our results seem to be new. 
	For a linear setting, the \(L_{d_0}\)-viscosity property of a stochastic solution has been established in \cite[Theorem~6.10]{krylov_PTRF_21}.
	
\smallskip
(b) Our results also provide some understanding of extremal solutions for linear equations. To explain this, assume that \(\partial D \in C^{2, \alpha}\) and that \(f\) is continuous, take a Borel function \(\overline{a} \colon \bR^d \to \mathbb{S}_\delta\), and consider the linear Dirichlet problem
\begin{align} \label{eq: linear equation}
\begin{cases} \tfrac{1}{2}\on{tr} \big[ \overline{a} (x) \nabla^2 u (x) \big] = 0, & \text{on } D, \\ u = f, & \text{on } \partial D. \end{cases}
\end{align} 
We know from the counterexample in \cite{zbMATH01140123} that this equation may have more than one \(L_d\)-viscosity solution. 
It was proved in \cite{J_96} that any \(L_d\)-viscosity solution \(u\) to this equation can be approximated uniformly on \(\oD\) by Sobolev solutions \(u^n \in W^{2}_{d, \textup{loc}} (D) \cap C (\oD)\) to 
\[
\tfrac{1}{2}\on{tr} \big[ \overline{a}^n (x) \nabla^2 u^n (x) \big] = 0\ \text{ on } D, 
\] 
where \(\overline{a}^n \colon D \to \mathbb{S}_\delta\) is continuous and \(\overline{a}^n_{ij} \to \overline{a}_{ij}\) in \(L_1 (D)\). 
Let \(P^n_x\) be the unique law of a solution process to
\[
d Y^n_{t \wedge \tau_D (Y^n)} = \sqrt{\overline{a}^n (Y^n_t)} \1_{\{t \leq \tau_D (Y^n)\}} \, d W^n_t, \quad Y^n_0 = x \in \oD.
\] 
By an application of Krylov's It\^o formula (see \cite[Theorem~10.2.1]{krylov_80} or \cite[Theorem~1.3]{krylov_AOP_21}), it follows that
\[
u^n (x) = E^{P^n_x} \big[ u^n (X_{\tau_D}) \big], \quad x \in \oD. 
\]
Furthermore, similar to \cite[Theorem~1.1]{krylov_AOP_21}, we get that the sequence \((P^n_x)_{k = 1}^\infty\) is tight and any accumulation point \(P_x\) is the law of a solution to the SDE
\begin{align} \label{eq: SDE good solution}
d Y_{t \wedge \tau_D (Y)} = \sqrt{\overline{a} (Y_t)} \1_{\{t \leq \tau_D (Y)\}} \, d W_t, \quad Y_0 = x. 
\end{align} 
Consequently, using that \(u^n \to u\) uniformly on \(\oD\) and \(P^n_x \to P_x\) weakly, by considerations as in Lemma~\ref{lem: st cont} below, we conclude that
\[
u (x) = E^{P_x} \big[ f (X_{\tau_D}) \big], \quad x \in \oD. 
\]
Denoting the set of all laws of solutions to \eqref{eq: SDE good solution} by \(\mathcal{S} (x)\), this yields that
\[
u (x) \geq \inf_{P \in \mathcal{S} (x)} E^P \big[ f (X_{\tau_D}) \big] =: \underline{u} (x), \quad x \in \oD, 
\] 
and with Theorem~\ref{theo: main continuity} we conclude that \(\underline{u}\) is the minimal \(L_d\)-viscosity solution to \eqref{eq: linear equation}. 
As in the proof of Theorem~\ref{theo: main continuity}, one can also show that 
\[
\overline{u} (x) := \sup_{P \in \mathcal{S} (x)} E^P \big[ f (X_{\tau_D}) \big], \quad x \in \oD, 
\] 
is also an \(L_d\)-viscosity solution to \eqref{eq: linear equation} and, by the above consideration, \(\overline{u}\) is the maximal one. 
This complements the abstract result from \cite{Jen_Sw_05} that general elliptic equations with bounded coefficients always have minimal and maximal \(L_p\)-viscosity solutions, identifying them for the linear equation \eqref{eq: linear equation} as value functions that arise through optimization over laws of SDEs (equivalently, solutions to martingale problems). 
It is an interesting question whether this observation extends to the Dirichlet problem \eqref{eq: PDE}. We leave this open for future investigation. 
	
	\smallskip
	(c) 
	Let us elaborate our strategy for the proof of the continuity of the value function. First, we establish the existence of a strong Markov selection, i.e., a family \((P_x)_{x \in \oD}\) such that \(P_x \in \cK (x)\), \((P_x \circ X^{-1}_{\cdot \wedge \tau_D})_{x \in \oD}\) is a strong Markov family and 
	\begin{equation*} \begin{split}
			v (x) = E^{P_x}  \Big[ f ( &X_{\tau_D}) + \int_0^{\tau_D} \int g (\lambda, X_t) \, M (dt, d \lambda)\Big], \quad x \in \oD.
	\end{split} \end{equation*} 
	Utilizing the Markov property of the selection \((P_x \circ X^{-1}_{\cdot \wedge \tau_D})_{x \in \oD}\), we show that there exists a bounded Borel map \(g^* \colon \oD \to \bR\) such that 
	\[
	E^{P_x}  \Big[ f ( X_{\tau_D}) + \int_0^{\tau_D}g^* (X_t) \, dt \,\Big], \quad x \in \oD.
	\] 
	In summary, we have linearized the problem, reducing the continuity question for \(v\) to the function 
	\[
	x \mapsto E^{P_x}  \Big[ f ( X_{\tau_D}) + \int_0^{\tau_D} g^* (X_t) \, dt\,\Big]. 
	\]
	To establish its continuity, we relate the strong Markov selection to an SDE whose diffusion coefficient takes values in \(\mathbb{S}_\delta\) and whose drift has an \(L_d\)-bound. Building on this connection, we adapt a number of analytic results that were developed by N.~V.~Krylov \cite{krylov_PTRF_21, krylov_AOP_21, krylov_21}. 
	
	\smallskip
	(d) In the following, we shortly sketch how our methods can be applied to stochastic control problems of the type studied in the recent paper \cite{F_25}.
	Consider the value function
	\begin{align*}
		w (x) := \inf_{P \in \cK(x)} E^P \Big[ \int_0^\infty \int e^{- \rho s} g (\lambda, X_s) \, M (ds, d \lambda) \Big], \quad x \in \bR^d, 
	\end{align*}
	for a bounded lower semicontinuous Borel function \(g \colon \Lambda \times \bR^d \to \bR\) and a parameter \(\rho > 0\). 
	Using the same methods as in the proof for Theorem~\ref{theo: SMS} below, one shows that there exists a selection \((P_x)_{x \in \bR^d}\) such that \(P_x \in \cK (x)\), \((P_x \circ X^{-1})_{x \in \bR^d}\) is a strong Markov family and 
	\[
	w (x) = E^{P_x} \Big[ \int_0^\infty \int e^{- \rho s} g (\lambda, X_s) \, M (ds, d \lambda) \Big], \quad x \in \bR^d. 
	\] 
	As in the proof of Theorem~\ref{theo: krylov interior} below, \((P_x \circ X^{-1})_{x \in \bR^d}\) is associated to an SDE with \(\mathbb{S}_\delta\)-valued diffusion coefficient and \(L_d\)-drift. Further, using arguments as in the proof of Theorem~\ref{theo: main continuity}, there exists a bounded Borel function \(g^* \colon \bR^d \to \bR\) such that 
	\[
	E^{P_x} \Big[ \int_0^\infty \int e^{- \rho s} g (\lambda, X_s) \, M (ds, d \lambda) \Big] = E^{P_x} \Big[ \int_0^\infty e^{- \rho s} g^* (X_s) \, ds \Big], \quad x \in \bR^d. 
	\]
	Summing up, compared to its counterpart with a bounded domain \(D\), we get a global Markov selection and \(w\) turns out to be a resolvent.
	By \cite[Corollary~1.2]{krylov_21}, there exits a constant \(C = C(d, \delta, \|\b\|_{L_d (\bR^d)}) > 0\) such that, for every compact set \(K \subset \bR^d\), as \(R \to \infty\), 
	\begin{align*}
		\sup_{x \in K} E^{P_x} \Big[ \int_0^\infty e^{- \rho s} | g^* (X_s) | \1_{\{ \|X_s\| > R \}} \, ds\Big] \leq \frac{C \|g^*\|_\infty}{R} \int_0^\infty e^{- \rho s} \Big(\sqrt{s} + \sup_{x \in K} \| x\| \Big) \, ds \to 0. 
	\end{align*}  
	Hence, as each 
	\[
	x \mapsto E^{P_x} \Big[ \int_0^\infty e^{- \rho s} g^* (X_s) \1_{\{ \|X_s\| \leq R\}} \, ds \Big], \quad R > 0, 
	\] 
	is continuous by \cite[Theorem~4.1]{krylov_PTRF_21}, the same is true for \(w\).
	Furthermore, again as in the proof of Theorem~\ref{theo: main continuity}, one may show that \(w\) is an \(L_{d_0}\)-viscosity solution to the HJB equation
	\[
	\rho w(x) - H (x, \nabla w (x), \nabla^2 w (x)) = 0, \quad x \in \bR^d. 
	\] 
	The same results hold when \(b\) is bounded, being not necessarily dominated by \(\b \in L_d (\bR^d)\). The only change in the above argument is that one can directly refer to \cite[Theorem~V.8.1]{bass} for the continuity of the resolvent
	\[
	x \mapsto E^{P_x} \Big[ \int_0^\infty e^{- \rho s} g^* (X_s) \, ds \Big], 
	\] 
	which provides the continuity of the value function. 
	These observations complement some results from~\cite{F_25}, providing new conditions for the \(L_{d_0}\)-viscosity property of the value function~\(w\). 
\end{discussion}

\subsection{The semigroup connection} \label{sec: parabolic}
Beside the connection to viscosity theory, control problems are naturally related to nonlinear semigroups. This connections traces back to \cite{N_75,N_76a, N_76b} for equations with sufficiently regular coefficients. In the following, we investigate the relationship for our general setting with merely measurable coefficients.
Throughout this section, we take a function 
\[
g \colon \Lambda \times \bR^d \to \bR. 
\] 
In addition to our permanent Standing Assumption~\ref{SA: main1}, we impose the following:
\begin{SA}\label{SA: main 3}
	\(g\) is bounded and lower semicontinuous.
\end{SA}
For a bounded Borel function \(f \colon \bR^d \to \bR\), we set 
\[
S_t (f) (x) := \inf_{P \in \cR (x)} E^P \Big[ f (X_t) + \int_0^t \int g (\lambda, X_s) \, M (ds, d\lambda)\Big], \quad t \in \bR_+.
\] 
The next result clarifies the semigroup connection within our control framework. The proofs for the following Theorems~\ref{theo: semigroup}, \ref{theo: strong Feller} and \ref{theo: joint regu semigroup} are given in Section~\ref{sec: pf main semigroup} below.
\begin{theorem} \label{theo: semigroup}
	The family \((S_t)_{t \geq 0}\) is a nonlinear semigroup on the space \(C_b (\bR^d)\) of bounded continuous functions from \(\bR^d\) into \(\bR\), i.e., 
	\[
	S_t (C_b (\bR^d)) \subset C_b (\bR^d), \quad S_t (S_s (f)) = S_{t + s} (f), \ \ \forall \, f \in C_b (\bR^d), \, t, s \in \bR_+.
	\] 
\end{theorem}

The operators \((S_t)_{t > 0}\) also have a smoothing property, which can be viewed as a regularization by noise effect.

\begin{theorem} \label{theo: strong Feller}
The family \((S_t)_{t > 0}\) has the strong \(\textit{LSC}_b\)-Feller property, i.e., 
\begin{align} \label{eq: strong Feller}
S_t (\textit{LSC}_b (\bR^d)) \subset C_b (\bR^d), \quad t > 0, 
\end{align} 
where \(\textit{LSC}_b (\bR^d)\) denotes the set of all bounded lower semicontinuous functions from \(\bR^d\) into \(\bR\). Moreover, there exists a constant \(\alpha = \alpha (d, \delta, \|\b\|_{L_d(\bR^d)}) \in (0, 1)\) such that, for every \(f \in \textit{LSC}_b (\bR^d)\) and \(t > 0\), the map \(x \mapsto S_t (f) (x)\) is locally \(\alpha\)-H\"older continuous. 
\end{theorem} 

We can also say something about the joint regularity of \((t, x) \mapsto S_t (f) (x)\).

\begin{theorem} \label{theo: joint regu semigroup}
\begin{enumerate}
\item[\textup{(a)}] For every \(f \in C_b (\bR^d)\), the map \((t, x) \mapsto S_t (f) (x)\) is jointly continuous on \(\bR_+ \times \bR^d\). 
\item[\textup{(b)}] For every bounded uniformly continuous \(f \colon \bR^d \to \bR\) and \(T > 0\), the map \((t, x) \mapsto S_t (f) (x)\) is uniformly continuous on \([0, T] \times \bR^d\) with a modulus of continuity given by 
\[
w (\rho) = C w_f (C \rho^{1/2}) + C \rho^{\alpha / 2}, \quad \rho \geq 0, 
\] 
where \(w_f\) is a concave, increasing modulus of continuity for \(f\), \(\alpha = \alpha (d, \delta, \|\b\|_{L_d (\bR^d)}) \in (0, 1)\) is the exponent from Theorem~\ref{theo: strong Feller}, and \(C > 0\) only depends on \(d, \delta, \|\b\|_{L_d(\bR^d)}, \|f\|_\infty, \|g\|_\infty\) and the time horizon \(T\). 
\item[\textup{(c)}] For every bounded \(\beta\)-H\"older continuous function \(f \colon \bR^d \to \bR\) and \(T > 0\), the map \((t, x) \mapsto S_t (f) (x)\) is \((\alpha \wedge \beta) / 2\)-H\"older continuous on \([0, T] \times \bR^d\), i.e., for \((t, x), (s, y) \in [0, T] \times \bR^d\), 
\[
| S_t (f) (x) - S_s (f) (y) | \leq C \Big( |t - s| + \|x - y\| \Big)^{(\alpha \wedge \beta) / 2}, 
\] 
where \(C\) depends on \(d, \delta, \|\b\|_{L_d(\bR^d)}, \|f\|_\infty, \|g\|_\infty, T\), and the H\"older constant of \(f\).
\end{enumerate}
\end{theorem}

\begin{remark} 
The smoothing property \eqref{eq: strong Feller} traces back to \cite{CN22b}, where it was established within a one-dimensional framework for model uncertainty. Our results demonstrate that this smoothing phenomenon is independent of both the dimension and the regularity of the coefficients. Furthermore, we strengthen this property by proving that lower semicontinuous functions are regularized to local Hölder continuity. 
\end{remark}

In the final part of this section we investigate the infinitesimal description of the semigroup \((S_t)_{t \geq 0}\), relating it to the parabolic PDE
\begin{align} \label{eq: parabolic PDE}
- \partial_t u - H (x, \nabla u (t, x), \nabla^2 u (t, x)) = 0 \text{ on } [0, T) \times \bR^d, 
\end{align} 
where \(T > 0\) is a finite deterministic time horizon. As explained in \cite{hol16,NL_82}, this PDE can be viewed as the generator equation for the semigroup \((S_t)_{t \geq 0}\). As we only consider measurable coefficients and an \(L_d\)-drift, the results from these works do not apply to our setting and we leave a rigorous discussion of the connection open for future investigations.  

\smallskip 
Similar as for the elliptic PDE \eqref{eq: PDE}, we consider \(L_{d + 1}\)-viscosity solutions, whose definition is recalled in the following. 
\begin{definition}
	A continuous function \(u \colon [0, T] \times \bR^d \to \bR\) is called an {\em \(L_{d + 1}\)-viscosity subsolution} (resp. {\em \(L_{d + 1}\)-viscosity supersolution}) to \eqref{eq: parabolic PDE}
	if for every \(\varphi \in W^{1, 2}_{d + 1, \textup{loc}} ([0, T] \times \bR^d)\) such that \(u - \varphi\) has a local strict maximum (resp., a local strict minimum) in \((t_0, x_0) \in [0, T) \times \bR^d\), we have 
	\begin{align*}
		&\lim\nolimits_{r \searrow 0} \on{ess \, sup}\nolimits_{(t, x) \in [t_0, t_0 + r) \times B_r (x_0)} (\partial_t \varphi (t, x) + H (x, \nabla \varphi (t, x), \nabla^2 \varphi (t, x)) \geq 0, \\
		(\hspace{-0.05cm}&\lim\nolimits_{r \searrow 0}\on{ess \, inf}\nolimits_{(t, x) \in [t_0, t_0 + r) \times B_r (x_0)} (\partial_t \varphi (t, x) + H (x, \nabla \varphi (t, x), \nabla^2 \varphi (t, x)) \leq 0), 
	\end{align*} 
	The function \(u\) is an {\em \(L_{d + 1}\)-viscosity solution} to \eqref{eq: parabolic PDE} if it is both an \(L_{d + 1}\)-visocisty sub- and supersolution.
\end{definition}

The following result is proved in Section~\ref{sec: pf parabolic viscosity} below. 

\begin{theorem} \label{theo: parabolic PDE}
	For every \(f \in C_b (\bR^d)\), the function \(u \colon [0, T] \times \bR^d \to \bR\) defined by \(u (t, x) := S_{T - t} (f) (x)\)
	is an \(L_{d + 1}\)-viscosity solution to \eqref{eq: parabolic PDE}. 
\end{theorem} 

For related results under suitable regularity conditions on the coefficients, partially also in a non-local setting, see \cite{CN22b, CN_25_JEEQ, CN_25_EJP}.
Theorem~\ref{theo: parabolic PDE} seems to be the first result that applies to a multi-dimensional setting with merely measurable coefficients and \(L_d\)-drift. 
 
	The Theorems~\ref{theo: semigroup}, \ref{theo: joint regu semigroup} and \ref{theo: parabolic PDE} together show that \(v\) is an \(L_{d + 1}\)-viscosity solution with a variety of regularity properties. For general elliptic nonlinear equations with bounded coefficients, interior H\"older estimates can be found in \cite[Proposition~5.3]{CKS_00}. For results on equations with certain bounded terms, see also the recent paper \cite{KST_19}.  
	It is worth to mention that we have to establish a priori continuity of \(v\) before we can apply general results for \(L_p\)-viscosity solutions to the value function \(v\).

\section{Proof of  the Theorems~\ref{theo: main continuity} and \ref{theo: main Holder continuity}} \label{sec: pf SMS}
In this section we prove the Theorems~\ref{theo: main continuity} and \ref{theo: main Holder continuity}, formalizing the strategy outlined in Discussion~\ref{diss: 1 MR}~(c). Throughout this section, the Standing Assumptions~\ref{SA: main1} and \ref{SA: main2} are assumed to hold.

\subsection{A strong Markov selection principle} \label{sec: SMSP}
A key tool for the proof of the Theorems~\ref{theo: main continuity} and \ref{theo: main Holder continuity} is the following strong Markov selection principle, which we think is of independent interest. 

\begin{theorem} \label{theo: SMS}
	There exists a family \((P_y)_{y \in \overline{D}}\) of probability measures on \((\Omega \times \m, \cF \otimes \mathcal{M})\) such that \(y \mapsto P_y\) is measurable, \(P_y \in \cK (y)\), \((P_y \circ X^{-1}_{\cdot \wedge \tau_D})_{y \in \overline{D}}\) is strongly Markov and 
	\begin{equation*} \begin{split}
		v (x) = E^{P_x}  \Big[ f ( X_{\tau_D}) + \int_0^{\tau_D} \int g (\lambda, X_t) \, M (dt, d \lambda)\Big]
\end{split} \end{equation*} 
	for all \(x \in \oD\). 
\end{theorem}
The core methods to prove this result were developed by N.~V.~Krylov in \cite{krylov_selection}, see also \cite[Theorems 6.2.3, 12.2.3]{SV}, \cite[Proposition~6.6]{nicole1987compactification}, \cite[Proposition~3.2]{Haus86} and \cite[Propositions~5.9, 5.14]{HausLep90}. The closest result in the literature appears to be \cite[Proposition~5.14]{HausLep90}. There are, however, some crucial differences between the existing results and our setting. First, we are working with an \(L_d\)-drift that might not be bounded (while the previous results were established under certain boundedness assumptions). This requires a careful investigation of some compactness and continuity properties of the set of control rules, as well as the treatment of the stopping time \(\tau_D\). Second, we aim for a time-homogeneous selection (as in \cite{Haus86}, but this paper considers a different control framework with a control-independent cost function), while the results from \cite{nicole1987compactification,HausLep90} give a time-inhomogeneous selection (which is natural as they consider coefficients with time-dependence). Again, this needs a careful investigation. We provide the details in the remainder of this subsection, mainly focusing on crucial differences to the existing literature. 

\subsubsection{Preparations}
By \cite[Lemma 3.2]{LakSPA15}, there exists a \((\mathcal{G}_t)_{t \geq 0}\)-predictable probability kernel \(\mathsf{m} \colon \bR_+ \times  \m \times \mathcal{B}(\Lambda) \to \mathbb{R}_+\) such that 
\begin{align} \label{eq: kernel}
	\mathsf{m}_s (d \lambda) (m)  \, ds \equiv \mathsf{m} (s, m, d \lambda) \, ds = m (ds, d \lambda), \quad m \in \m.
\end{align} 
For \(t \in \bR_+\) and \(\overline{\omega}, \overline{\alpha} \in \Omega \times \m\) with \( X_t (\overline{\omega}) = X_t (\overline{\alpha})\), we define a ``path'' \(\overline{\omega} \otimes_t \overline{\alpha} \in \Omega \times \m\) such that 
\[
(X_s, \mathsf{m}_s (d \lambda)) (\overline{\omega} \otimes_t \overline{\alpha}) := \begin{cases} (X_s, \mathsf{m}_s (d \lambda)) (\overline{\omega}), & s < t, \\ (X_s, \mathsf{m}_s (d \lambda)) (\overline{\alpha}), &  s \geq t. \end{cases}
\]
Let \(\cP(\Omega \times \m)\) be the set of probability measures on \((\Omega \times \m, \cF \otimes \mathcal{M})\).
For \(P \in \cP(\Omega \times \m)\), a finite \((\cG_t)_{t \geq 0}\)-stopping time \(\tau\) and a transition kernel \(\overline{\omega} \mapsto Q_{\overline{\omega}}\) from \((\Omega \times \m, \cF \otimes \M)\) into \((\Omega \times \m, \cF \otimes \M)\) such that \(Q_{\overline{\omega}} (X_{\tau (\overline{\omega})} = (X_\tau) (\overline{\omega})) = 1\) for \(P\)-a.a. \(\overline{\omega} \in \Omega \times \m\), we set 
\begin{align} \label{eq: pasting measure}
	(P \otimes_\tau Q) (G) := \iint \1_G (\overline{\omega} \otimes_{\tau (\overline{\omega})} \overline{\alpha} ) \, Q_{\overline{\omega}} (d \overline{\alpha}) \, P (d \overline{\omega}), \quad G \in \cF \otimes \M.
\end{align} 
Recall that \(\cG_\tau\) is countably generated for any \((\cG_t)_{t \geq 0}\)-stopping time \(\tau\), cf. \cite[Notations~6.1]{nicole1987compactification}. Consequently, for \(P \in \cP(\Omega \times \m)\), there exists a regular conditional probability w.r.t. \(\cG_\tau\) that we denote by \(P (\, \cdot \mid \cG_\tau)\).
The first exit time of \(X\) from a set \(U \subset \bR^d\) after time \(s \in \bR_+\) is denoted by 
\[
\tau_U^s := \inf \{t \geq s \colon X_t \not \in U\}.
\]
For \(t \in \bR_+\) and \(\overline{\omega} \in \Omega \times \m\), we set 
\begin{align} \label{eq: Pi kernel}
	\Pi^t_{\overline{\omega}} \, (A) := \delta_{( (s\, \mapsto\, X_{t} (\overline{\omega})), \delta^0)} (A), \quad A \in \cF \otimes \mathcal{M},
\end{align} 
where \(\delta^0 (ds, d\lambda) = \delta_{\lambda^0} (d \lambda) \, ds\) for some fixed, but arbitrary, element \(\lambda^0 \in \Lambda\).
\begin{definition} \label{def: stability}
	We say that a set-valued map  \(\cU \colon \bR_+ \times \oD \twoheadrightarrow \cP(\Omega \times \m)\) is
	\begin{enumerate}
		\item[\textup{(i)}]
		\emph{stable under conditioning} if for all \((t, x) \in \bR_+ \times \oD\), \(P \in \cU (t, x)\), any \(P\)-a.s. finite \((\cF_s)_{s \geq 0}\)-stopping time \(\tau \colon \Omega \to [0, \infty]\) with \(P\)-a.s. \(t \leq \tau \leq \tau^t_D\), 
		there exists a \(P\)-null set \(N \in \cG_\tau\) such that 
		\[\Pi_{\overline{\omega}}^{\tau (\overline{\omega})} \otimes_{\tau (\overline{\omega})} P (\,\cdot\mid \cG_\tau) (\overline{\omega}) \in \cU(\tau (\overline{\omega}), (X_\tau) (\overline{\omega}))\] for all \(\overline{\omega} \not \in N\);
		\item[\textup{(ii)}]
		\emph{stable under pasting} if for all \((t, x) \in \bR_+ \times \oD\), \(P \in \cU(t, x)\), any \(P\)-a.s. finite \((\cF_s)_{s \geq 0}\)-stopping time \(\tau \colon \Omega \to [0, \infty]\) with \(P\)-a.s. \(t \leq \tau \leq \tau^t_D\) and every \(\cG_\tau\)-measurable map \(\overline{\omega} \mapsto Q_{\overline{\omega}} \in \cP(\Omega \times \m)\) such that \(P\)-a.s. \(Q \in \cU (\tau, X_\tau)\), then \(P \otimes_\tau Q \in \cU (t, x)\).
	\end{enumerate}
\end{definition}

Thinking of \(\cU\equiv\cK\), the concatenation \(\Pi^{\tau} \otimes_\tau\) in part (i) from Definition~\ref{def: stability} is necessary, because \(P (\, \cdot \mid \cG_\tau)\) typically fails to satisfy the initial condition from \(\cK(\tau, X_\tau)\), i.e., under a measure from \(\cK (\tau, X_\tau)\) the first coordinate \(X\) remains in \(X_{\tau}\) till time \(\tau\).

\begin{lemma} \label{lem: stability}
	The set-valued map \(\bR_+ \times \oD \ni (t, x) \mapsto \cK(t, x)\) is stable under conditioning and pasting. 
\end{lemma}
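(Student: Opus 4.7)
My plan is to adapt the classical Stroock--Varadhan stability arguments for martingale problems (cf.~\cite{SV}) to the present relaxed framework, keeping careful track of the stopping-time constraint \(t\leq\tau\leq\tau^t_D\). In both parts of Definition~\ref{def: stability}, two verifications must be carried out: the initial condition defining membership in \(\cK\) is preserved, and the processes \((C_s(\varphi))_{s\geq t}\), \(\varphi\in C^2_b(\bR^d;\bR)\), remain \((\cG_s)\)-martingales under the new measure.

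For stability under conditioning, I fix \(P\in\cK(t,x)\) and a stopping time \(\tau\) as in Definition~\ref{def: stability}\,(i). Since \(\cG_\tau\) is countably generated, a regular conditional probability \(P(\,\cdot\mid\cG_\tau)\) exists. The initial condition for
\[
Q_{\overline\omega}:=\Pi^{\tau(\overline\omega)}_{\overline\omega}\otimes_{\tau(\overline\omega)}P(\,\cdot\mid\cG_\tau)(\overline\omega)
\]
is automatic from the definition of \(\Pi^{\tau(\overline\omega)}_{\overline\omega}\) in \eqref{eq: Pi kernel}: the first coordinate is constantly equal to \(X_{\tau(\overline\omega)}(\overline\omega)\) on \([0,\tau(\overline\omega)]\), which is exactly what membership in \(\cK(\tau(\overline\omega),X_\tau(\overline\omega))\) requires. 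For the martingale property on \([\tau(\overline\omega),\infty)\), I would apply the standard conditioning argument for martingale problems (see, e.g., \cite[Theorem~1.2.10]{SV}), which produces a \(P\)-null set \(N\in\cG_\tau\) outside of which \((C_s(\varphi))_{s\geq\tau(\overline\omega)}\) is a \(P(\,\cdot\mid\cG_\tau)(\overline\omega)\)-martingale. Since concatenation with \(\Pi^{\tau(\overline\omega)}_{\overline\omega}\) modifies only paths strictly before \(\tau(\overline\omega)\) and leaves \(\cG_s\) for \(s\geq\tau(\overline\omega)\) unchanged, the martingale property transfers to \(Q_{\overline\omega}\).

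For stability under pasting, I take \(P\in\cK(t,x)\), \(\tau\) and the kernel \(\overline\omega\mapsto Q_{\overline\omega}\) as in Definition~\ref{def: stability}\,(ii). The initial condition for \(P\otimes_\tau Q\) is immediate, since \(\otimes_\tau\) leaves the path on \([0,\tau(\overline\omega))\) untouched and \(\tau\geq t\), so that \(P(X=x\text{ on }[0,t])=1\) forces the same identity under \(P\otimes_\tau Q\). For the martingale property, I would use the splitting
\[
C_s(\varphi)=C_{s\wedge\tau}(\varphi)+\bigl(C_s(\varphi)-C_{s\wedge\tau}(\varphi)\bigr),\qquad s\geq t.
\]
The first summand is an \((\cG_s)\)-martingale under \(P\otimes_\tau Q\) by optional stopping applied to the \(P\)-martingale \((C_s(\varphi))_{s\geq t}\), using that \(P\otimes_\tau Q\) and \(P\) coincide on \(\cG_\tau\). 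The second summand is handled by conditioning on \(\cG_\tau\) and invoking the martingale property of \((C_s(\varphi))_{s\geq\tau(\overline\omega)}\) under each \(Q_{\overline\omega}\in\cK(\tau(\overline\omega),X_\tau(\overline\omega))\); here the \(\cG_\tau\)-measurability of \(X_\tau\) and of the integral against \(M\) up to \(\tau\) guarantees that the two measures agree on the quantities that matter.

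The hardest part will be the bookkeeping around the relaxed control \(M\) and its \((\cG_t)_{t\geq0}\)-predictable kernel representation \(\mathsf{m}\) from \eqref{eq: kernel}. Since \(C_s(\varphi)\) involves integration against \(M\) over \([0,s]\), splitting at \(\tau\) requires that this integration is compatible with the concatenation operation \(\otimes_\tau\); this compatibility is ensured precisely by the predictability of \(\mathsf{m}\) together with the fact that \(\cG_\tau\) captures all pre-\(\tau\) information contained in both \(X\) and \(M\). The restriction \(\tau\leq\tau^t_D\) plays no role in the martingale verifications themselves (the operator \(L\) is defined on all of \(\bR^d\)), but it ensures that \(X_\tau\in\oD\), so that the targets \(\cK(\tau,X_\tau)\) appearing in the two stability statements are indeed among the sets for which the present definition is formulated.
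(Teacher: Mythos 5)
Your proposal is correct and takes essentially the same approach as the paper: the paper disposes of this lemma with a one-line citation to the classical Stroock--Varadhan argument (\cite[Lemma~12.2.1]{SV}) and its relaxed-control analogues in \cite{nicole1987compactification,HausLep90}, and your sketch is precisely an unpacking of what those references do, adapted to the relaxed framework with the $\Pi$-kernel handling the initial condition and the splitting at $\tau$ handling the martingale property. The only places where more care is silently needed are the bookkeeping that, for $A\in\cG_s$, the difference $C_{s'\wedge\tau}(\varphi)-C_{s\wedge\tau}(\varphi)$ vanishes on $\{\tau\le s\}$ and $A\cap\{\tau>s\}\in\cG_\tau$ (so the agreement of $P$ and $P\otimes_\tau Q$ on $\cG_\tau$ actually applies), and the choice of a version of the regular conditional probability with $P(X_{\tau(\overline\omega)}=X_{\tau(\overline\omega)}(\overline\omega)\mid\cG_\tau)(\overline\omega)=1$; both are standard and consistent with the route you describe.
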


\begin{proof}
	The claim follows from standard properties of martingales, see \cite[Lemma 12.2.1]{SV} for the classical situation without controls; see \cite[Theorems~6.2, 6.3]{nicole1987compactification}, \cite[Lemma~5.7, 5.8]{HausLep90} for control settings. We omit the details for brevity.
\end{proof}

For \(t \in \bR_+\), we define the shift operator \(\theta_t \colon \Omega \times \m \to \Omega \times\m\) by 
\begin{align} \label{eq: shift}
	(X, \mathsf{m}_r (d \lambda) \, dr) (\theta_t ( \overline{\omega})) := (X_{\,\cdot\, + t}, \mathsf{m}_{r + t} (d \lambda) \, dr) (\overline{\omega}).
\end{align}
Furthermore, we define the ``right inverse shift'' \(\gamma_t \colon \Omega \times \m \to \Omega \times \m\) by 
\begin{align} \label{eq: inverse shift}
	(X, \mathsf{m}_r (d \lambda) \, dr) (\gamma_t (\overline{\omega})) := (X_{ (\,\cdot\, - t) \, \vee \, 0}, \mathsf{m}_{(r - t) \, \vee \, 0} (d \lambda) \, dr) (\overline{\omega}).
\end{align} 
Notice that 
\[
\theta_t \circ \gamma_t = \on{id},
\]
which explains that \(\gamma_t\) can be seen as the right inverse of \(\theta_t\). 

\begin{lemma} \label{lem: shift}
	Take \((t, x) \in \bR_+ \times \bR^d\). 
	\begin{enumerate}
		\item[\textup{(i)}] If \(P \in \cK(t, x)\), then \(P \circ \theta_t^{-1} \in \cR(x)\).
		\item[\textup{(ii)}] \(P \in \cR (x)\) if and only if \(P \circ \gamma_t^{-1} \in \cK (t, x)\).
	\end{enumerate}
\end{lemma}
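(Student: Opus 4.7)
The proof hinges on unwinding the definitions of \(\theta_t\) and \(\gamma_t\) at the level of the coordinate process \((X, M)\) and transferring the defining martingale property. I plan to carry out both parts by direct computation.

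For (i), let \(P \in \cK(t, x)\) and set \(\widetilde{P} := P \circ \theta_t^{-1}\). The initial condition \(\widetilde{P}(X_0 = x) = 1\) is immediate from \(X_0 \circ \theta_t = X_t\) together with \(P(X = x \text{ on } [0, t]) = 1\). For the martingale property, fix \(\varphi \in C^2_b(\bR^d; \bR)\). The key identity I would establish is
\[
C_s(\varphi) \circ \theta_t \; = \; C_{s+t}(\varphi) - C_t(\varphi) + \varphi(x), \qquad s \geq 0,
\]
which follows from the transformation \(M(du, d\lambda) \circ \theta_t = M(d(u+t), d\lambda)\) and from \(X \equiv x\) on \([0,t]\) under \(P\), so that \(C_t(\varphi) = \varphi(x) - \int_0^t\!\int L \, M(du, d\lambda)\). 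Combined with the filtration compatibility \(\theta_t^{-1}(\cG_s) \subseteq \cG_{s+t}\), the \(P\)-martingale property of \((C_{s+t}(\varphi) - C_t(\varphi))_{s \geq 0}\) for \((\cG_{s+t})_{s \geq 0}\) transfers to the \(\widetilde{P}\)-martingale property of \((C_s(\varphi))_{s \geq 0}\) for \((\cG_s)_{s \geq 0}\).

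For (ii), the forward direction is analogous. Under \(\gamma_t\), the coordinate path is frozen at \(X_0\) on \([0, t]\) and follows the \(P\)-path shifted by \(t\) afterwards, while the control kernel becomes \(\mathsf{m}_{(s-t) \vee 0}\). A direct computation gives
\[
C_s(\varphi) \circ \gamma_t \; = \; C_{s-t}(\varphi) - c_t, \qquad s \geq t,
\]
where \(c_t := t \int L(x, \lambda)\, \mathsf{m}_0(d\lambda)\) is \(\cG_0\)-measurable. Together with the inclusion \(\gamma_t^{-1}(\cG_{s_1}) \subseteq \cG_{s_1 - t}\) for \(s_1 \geq t\), this transfers the martingale property from \(P\) to \(P \circ \gamma_t^{-1}\) in the desired range \(s \geq t\); the initial condition at \((t, x)\) comes from \(X_r \circ \gamma_t = X_0\) for \(r \leq t\). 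The converse direction of (ii) is free: applying (i) to \(P \circ \gamma_t^{-1} \in \cK(t, x)\) yields \(P \circ \gamma_t^{-1} \circ \theta_t^{-1} \in \cR(x)\), and this measure equals \(P\) by the identity \(\theta_t \circ \gamma_t = \on{id}\).

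The main technical obstacle is simply the bookkeeping: tracking precisely how \((X, M)\) and the filtration \((\cG_s)\) transform under \(\theta_t\) and \(\gamma_t\), and verifying that the correction terms \(\varphi(x)\) in (i) and \(c_t\) in (ii)—being \(\cG_0\)-measurable—can be subtracted without affecting the martingale property relative to the appropriately shifted filtration.
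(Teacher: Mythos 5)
Your computation is correct and amounts to the same argument the paper invokes by citing \cite[Lemma~2.9]{jacod80} together with the identity \(\theta_t \circ \gamma_t = \on{id}\); you simply carry out the shift/inverse-shift bookkeeping explicitly rather than deferring to that reference. As a minor streamlining, the correction term \(c_t\) in part (ii) does not depend on the time index \(s\), so it cancels in the increment \(C_{s'}(\varphi) - C_s(\varphi)\) after composing with \(\gamma_t\), and one therefore need not appeal to \(\cG_0\)-measurability of \(c_t\) (which is in any case slightly delicate, since \(\mathsf{m}_0\) is only a fixed version of a predictable kernel).
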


\begin{proof}
	The claims follow from \cite[Lemma~2.9]{jacod80}, the identity \(\theta \circ \gamma = \on{id}\) and the definition of the sets \(\cK(t, x)\) and \(\cR(x)\).
\end{proof}

\begin{lemma} \label{lem: st cont}
\begin{enumerate}
	\item[\textup{(i)}] For every \(x \in \partial D\) and \(P \in \cK (x)\), \(P (\tau_{\oD} = 0) = 1\). 
	\item[\textup{(ii)}] For every \((t, x) \in \bR_+ \times \oD\) and \(P \in \cK (t, x)\), \(P (\tau^t_D = \tau^t_{\oD} < \infty) = 1\).
	\item[\textup{(iii)}] The map \((s, \omega') \mapsto \tau^s_D (\omega')\), from \(\bR_+ \times \Omega\) into \([0, \infty]\), is continuous at every point \((t, \omega)\) such that \(\omega \in \{ \tau^t_D = \tau^t_{\oD}\}\).
\end{enumerate}
\end{lemma} 
\begin{proof}
	(i) We adapt the proof of \cite[Lemma~2.4]{LM80} (cf. also \cite[Theorem~A.12]{HausLep90}) to our setting with \(L_d\)-drift. 
	Take \(x \in \partial D\) and \(P \in \cR (x)\). Associated to \(x\), let \(r > 0\) and \(z \not\in \oD\) be as in the exterior ball condition, i.e., \(\{y \in \bR^d \colon \|y - z\| \leq r \} \cap \oD=  \{x\}\). Set 
			\[
			w (y) := e^{- \ell r^2} - e^{- \ell \|y - z\|^2}, \quad y \in \overline{D}, \, \ell > 0.
			\]
		We have 
			\[
			\inf_{\substack{ y \in \oD\\\lambda \in \Lambda}} \langle y - z, a (\lambda, y) (y - z) \rangle \geq \delta \inf_{y \in \oD} \|y - z\|^2 =: \delta ' > 0.
			\] 
			Now, for all \(y \in \oD\) and \(\lambda \in \Lambda\), we get that
			\begin{equation*} \begin{split} 
									\tfrac{1}{2}\on{tr}\big[ &a (\lambda, y) \nabla^2 w (y) \big] +	\langle b ( \lambda, y), \nabla w (y) \rangle 
									\\&=  \ell e^{- \ell \|y - z\|^2}  \Big (\on{tr} \big[ a (\lambda, y) \big] + 2 \langle b(\lambda, y), y - z \rangle - 2 \ell \langle y - z, a(\lambda, y) (y - z) \rangle \Big)
									\\&\leq \ell e^{- \ell \delta ' / \delta } \big( \text{const}\, (1 + \b (y)) - 2 \ell \delta ' \big),
								\end{split}
					\end{equation*}
					where the constant is independent of \(\ell\).
			Now, using It\^o's formula, \(w (x) = 0\) and \cite[Theorem~2.8]{krylov_AOP_21}, we obtain that 
			\begin{align*}
						0 &\leq E^{P_x} \big[ w (X_{\tau_{\oD} \wedge 1}) \big] 
						\\&= E^{P_x} \Big[ \int_0^{\tau_{\oD} \wedge 1} \hspace{-0.1cm} \int \big( \tfrac{1}{2}\on{tr}\big[ a (\lambda, X_s) \nabla^2 w (X_s) \big] +	\langle b (\lambda, X_s), \nabla w (X_s) \rangle \big) \, M (ds, d\lambda) \Big]  
						\\&\leq \ell e^{- \ell \delta ' / \delta} \Big( \on{const}\, E^{P_x} \Big[ \int^{\tau_{\oD}}_0 ( 1 + \b (X_s) ) \, ds\Big] - 2 \ell \delta' E^{P_x} [ \tau_{\oD} \wedge 1 ] \Big) 
						\\&\leq \ell e^{- \ell \delta' / \delta} \Big( \on{const} - \, 2 \ell \delta' E^{P_x} [ \tau_{\oD} \wedge 1 ] \Big).
					\end{align*}
					Taking \(\ell\) large enough shows that \(E^{P_x} [ \tau_{\oD} \wedge 1 ] = 0\) and hence, \(P_x\)-a.s. \(\tau_{\oD} = 0\).

	\smallskip
	\noindent
	(ii) Take \(P \in \cK (t, x)\). By Lemma~\ref{lem: shift}, \(P \circ \theta_t^{-1} \in \cK (x)\) and \eqref{eq: bound stopping time} shows that
	\[
	P (\tau_D^t < \infty) = P \circ \theta_t^{-1} (\tau_D < \infty) = 1.
	\]
	Furthermore, by the stability under conditioning property of \(\cK\) and Lemma~\ref{lem: shift}, 
	\[
	P (\, \theta^{-1}_{\tau_D^t} (\, \cdot \,) \mid \cG_{\tau_D^t} ) \in \cK (X_{\tau_D^t}).
	\]
	Using part (i), we conclude that 
	\[
	P ( \tau^t_D = \tau^t_{\oD} ) = E^{P} \big[ P ( \theta_{\tau^t_D}^{-1} ( \{\tau_{\oD} = 0\}) \mid \cG_{\tau^t_D}) \big] = 1.
	\]
	
	\noindent
	(iii) Take a sequence \((t^n, \omega^n) \to (t, \omega)\) with \(\omega \in \{\tau^t_D = \tau^t_{\oD}\}\) and notice that
	\[
	\tau^{t^n}_D (\omega^n) = \tau_D (\theta_{t^n} ( \omega^n)) + t^n, \quad \tau_D (\theta_t (\omega)) = \tau_{\oD} (\theta_t (\omega)).
	\] 
	We have \(\theta_{t^n} ( \omega^n ) \to \theta_t ( \omega)\) by the Arzel\`a--Ascoli theorem.
	As \(\tau_D\) is lower semicontinuous and \(\tau_{\oD}\) is upper semicontinuous (\cite[Exercise~2.2.1]{pinsky}), we obtain that 
	\begin{align*}
		\tau_D (\theta_t (\omega)) &\leq \liminf_{n \to \infty} \tau_D (\theta_{t^n} (\omega^n)) 
		\leq \limsup_{n \to \infty} \tau_D (\theta_{t^n} (\omega^n))
		\\&\leq \limsup_{n \to \infty} \tau_{\oD} (\theta_{t^n} (\omega^n))
		\leq \tau_{\oD} (\theta_t( \omega)) = \tau_D (\theta_t ( \omega)). \phantom {\lim_{n \to \infty}}
	\end{align*}
	In summary, the claim follows. 
\end{proof}

Following standard terminology (e.g., \cite[Lemma~17.4]{charalambos2013infinite}), a set-valued map \(\kappa \colon G \twoheadrightarrow H\) between topological spaces is called upper hemicontinuous if the lower inverse \(\kappa^\ell (C) = \{ g \in G \colon \kappa (g) \cap C \not = \emptyset\}\) of any closed set \(C \subset H\) is closed.

\begin{lemma} \label{lem: upper hemi}
	The set-valued map \(\bR_+ \times \bR^d \ni (t, x) \mapsto \cK(t, x)\) is upper hemicontinuous with non-empty compact values. 
\end{lemma}

\begin{proof}
	That \(\cK (x) \neq \emptyset\) follows from \cite[Theorem~2.1]{krylov_AOP_21}. Hence, we also have \(\cK (t, x) \neq \emptyset\) by Lemma~\ref{lem: shift}~(ii). 
	In view of \cite[Theorem~17.20]{charalambos2013infinite}, \((t, x) \mapsto \cK (t, x)\) is upper hemicontinuous with compact values if and only if, for every sequence \((t_n, x_n)_{n = 0}^\infty \subset \bR_+ \times \bR^d\) with \((t_n, x_n) \to (t_0, x_0)\) and \(P_n \in \cK (t_n, x_n)\) for \(n \in \mathbb{N}\), the sequence \((P_n)_{n = 1}^\infty\) has a limit point in \(\cK (t_0, x_0)\). 
	By \cite[Corollary~1.2]{krylov_21} and Kolmogorov's tightness criterion (\cite[Theorem~23.7]{Kallenberg}), the sequence \((P_n \circ X^{-1}_{\cdot + t_n})_{n = 1}^\infty\) is relative compact. Hence, by the continuity of \((t, \omega) \mapsto \omega ( (\, \cdot - t) \vee 0)\), the same is true for \((P_n \circ X^{-1})_{n = 1}^\infty\), where we use that \(P_n ( X = x_n \text{ on } [0, t_n] ) = 1\). By the compactness of \(\m\), we conclude that \((P_n)_{n = 1}^\infty\) is relatively compact. Hence, possibly passing to a subsequence, we may assume that \(P_n \to P_0 \in \cP (\Omega \times \m)\).  It remains to show that \(P_0 \in \cK (t_0, x_0)\). Using that the function \(\omega \mapsto \|\omega (t) - x_0\| \wedge 1\) is bounded and continuous, we get for every \(t < t_0\) that \(P_0\)-a.s. \(X_t = x\) and, by the continuous paths, also \(P_0\)-a.s. \(X_{t_0} = x_0\). 
	We now use a Krylov-type smoothing method to show that, for every \(\varphi \in C^2_b (\bR^d)\), the process \((C_t (\varphi))_{t \geq t_0}\) from \eqref{eq: test process} is a \(P_0\)-martingale. By the boundedness of the coefficient \(a\), each \(C (\varphi)\) is a true \(P_0\)-martingale once it is a local \(P_0\)-martingale.  
	Take \(R > 0\) such that \(\sup_{n \geq 0} \|x_n\| < R\) and set 
	\[
	\tau_R := \inf \{t \geq 0 \colon \|X_t\| \geq R\}.
	\] 
	In the following, we show that, for every \(t_0 < s < t\), \(\varphi \in C^2_b (\bR^d)\) and any \(\cG_s\)-measurable bounded continuous function \(\psi \colon \Omega \times \m \to \bR\), 
	\begin{align} \label{eq: approx to show}
		E^{P_0} \big[ \psi \, ( C_{t \wedge \tau_R} (\varphi) - C_{s \wedge \tau_R} (\varphi) ) \big] = 0. 
	\end{align} 
	This implies the desired martingale property.
	Without loss of generality, we may assume that \(\sup_{n \geq 1} t_n < s\). Hence, by definition of \(\cK\), and because \(P_n \in \cK (t_n, x_n)\), we have 
	\begin{align} \label{eq: approx final 1}
		E^{P_n} \big[ \psi \, ( C_{t \wedge \tau_R} (\varphi) - C_{s \wedge \tau_R} (\varphi) ) \big]  = 0, \quad n \in \mathbb{N}. 
	\end{align} 
	We now transfer this property from the sequence \((P_n)_{n = 1}^\infty\) to its limit \(P_0\). 
	Let \(\rho \colon \bR^d \to \bR_+\) be a standard mollifier (\cite[Section~7.2]{GT_01}) and define, for \((\lambda, x) \in \Lambda \times \bR^d\),
	\[
	b_h (\lambda, x) := \int_{B_{2R}} \rho( y ) b (\lambda, x - hy) \, dy, \quad B_R := \{y \in \bR^d \colon \|y\| < R\}.
	\] 
	Define \(a_h\) in the same way.
	We claim that, for every \(i, j = 1, \dots, d\), 
	\begin{equation} \label{eq: approx} 
		\begin{split} 
			\lim_{h \searrow 0} \, \int_{B_R} \sup_{\lambda \in \Lambda} \Big( | b_h^i (\lambda, x) - b^i (\lambda, x) |^d + | a_h^{ij} (\lambda, x) - a^{ij} (\lambda&, x) |^d \Big) \, dx = 0.
		\end{split}
	\end{equation} 
	
	We shortly discuss the first term, the argument for the second being the same. Fix an \(\varepsilon > 0\). 
	By the compactness of \(\Lambda\) and the equicontinuity assumption on \(b^i\) (that is assumed in Standing Assumption~\ref{SA: main1}~(a)), there exists a finite number \(N \in \mathbb{N}\) and points \(\lambda_1, \dots, \lambda_N \in \Lambda\) such that 
	\begin{align*}
		\sup_{\lambda \in \Lambda} | b^i_h (\lambda, x) - b^i (\lambda, x) | &\leq \max_{k \leq N} | b^i_h (\lambda_k, x) - b^i (\lambda_k, x) | + 2 \varepsilon, \quad x \in B_R.
	\end{align*} 
	Using this fact, we obtain that
	\begin{align*}
		\int_{B_R} \sup_{\lambda \in \Lambda} |& b_h^i (\lambda, x) - b^i (\lambda, x) |^d \, dx \\&\leq 2^{d-1} \sum_{k = 1}^N \int_{B_R} | b_h^i (\lambda_k, x) - b^i (\lambda_k, x)|^d \, dx + 2^{2 d-1} \varepsilon^d \mu_L (B_R), 
	\end{align*}
	where \(\mu_L\) denotes the Lebesgue measure. 
	Since the first term converges to zero as \(h \to 0\) (\cite[Lemma~7.2]{GT_01}), we conclude that \eqref{eq: approx} holds. 
	
	Define \(L_h\) similar to \(L\) from \eqref{eq: operator L} with \(a_h\) and \(b_h\) instead of \(a\) and \(b\), respectively. 
	Combining \eqref{eq: approx}  with \cite[Theorem~1.1]{krylov_21} yields that, with \(h \searrow 0\), 
	\begin{equation}\label{eq: approx final 2} \begin{split}
			&\sup_{n \geq 0} E^{P_n} \Big[ \Big| \int_{s \wedge \tau_R}^{t \wedge \tau_R} (L_h - L) (\lambda, X_r, \nabla \varphi (X_r), \nabla^2 \varphi (X_r)) \, M (dr, d \lambda) \Big| \Big]  
			\\& \ \leq \sup_{n \geq 0} C \sum_{i, j = 1}^d \, E^{P_n \circ X_{\cdot + t_n}^{-1}} \Big[ \int_0^{\tau_R} \sup_{\lambda \in \Lambda} \Big( | b^i_h (\lambda, X_r) - b^i (\lambda, X_r)| + | a^{ij}_h (\lambda, X_r) - a^{ij} (\lambda, X_r) | \Big) \, dr \Big] 
			\\&\ \leq CR  \sum_{i, j = 1}^d  \Big( \int_{B_R} \sup_{\lambda \in \Lambda} \Big( | b_h^i (\lambda, x) - b^i (\lambda, x) |^d + | a_h^{ij} (\lambda, x) - a^{ij} (\lambda, x) |^d \Big) \, dx \Big)^{1/d} \to 0, 
		\end{split}
	\end{equation}
	where the constant \(C\) depends only on \(d, \delta, \|\b\|_{L_d(\bR^d)}, \|\nabla \varphi\|_\infty\) and \(\|\nabla^2 \varphi\|_\infty\). 
	Moreover, using \cite[Theorem~2.7]{krylov_AOP_21}, we obtain that 
	\begin{align*}
		E^{P_n} \big[ (&C^h_{t \wedge \tau_R} (\varphi) - C^h_{s \wedge \tau_R} (\varphi))^2 \big] 
		\\&\ \leq 2 \| \varphi \|_\infty^2  + 2 E^{P_n} \Big[ \Big( \int_{s \wedge \tau_R}^{t \wedge \tau_R} \int L_h (\lambda, X_r, \nabla \varphi (X_r), \nabla^2 \varphi (X_r)) \, M (dr, d \lambda) \Big)^2 \Big] 
		\\&\ \leq 2 \| \varphi\|_\infty^2 + C \sum_{i, j = 1}^{d} E^{P_n \circ X_{\cdot + t_n}^{-1}} \Big[ \Big( \int_0^{t} \sup_{\lambda \in \Lambda} \Big( | b^i_h (\lambda, X_r)| + | a^{ij}_h (\lambda, X_r) | \Big) \1_{\{ X_r \in B_R\}} \, dr \Big)^2 \Big] 
		\\&\ \leq 2 \| \varphi\|_\infty^2 + C \sum_{i, j = 1}^d  \Big( \int_{B_R} \sup_{\lambda \in \Lambda} \Big( | b_h^i (\lambda, x) |^d + | a_h^{ij} (\lambda, x) |^d \Big) \, dx \Big)^{2/d}, 
	\end{align*}
	where the constant \(C\) depends only on \(d, \delta, \|\b\|_{L_d (\bR^d)}, \| \nabla \varphi\|_\infty, \|\nabla^2 \varphi\|_\infty\) and \(t\).
	For small enough \(h > 0\), the random variable \(C^h_{t \wedge \tau_R} (\varphi) - C^h_{s \wedge \tau_R} (\varphi)\) is \(P_0\)-a.s. continuous by \cite[Theorem~8.10.61]{bogachev} and Lemma~\ref{lem: st cont}, using Skorokhod's coupling theorem (\cite[Theorem~5.31]{Kallenberg}) and the uniform integrability that is entailed by the above second moment bound, we obtain that 
	\begin{equation} \label{eq: approx final 3}
		\begin{split} 
			\lim_{n \to \infty} E^{P_n} \big[ \psi \, ( &C^h_{t \wedge \tau_R} (\varphi) - C^h_{s \wedge \tau_R} (\varphi)) \big] 
			= E^{P_0}  \big[ \psi \, ( C_{t \wedge \tau_R}^h (\varphi) - C_{s \wedge \tau_R}^h (\varphi) ) \big].
		\end{split}
	\end{equation}
	Putting \eqref{eq: approx final 1}, \eqref{eq: approx final 2} and \eqref{eq: approx final 3} together yields \eqref{eq: approx to show} and hence, the proof is complete. 
\end{proof}

Define 
\[
	\Gamma_t^f := f (X_{\tau_D^t}) + \int_0^{\tau_D^t} \int g (\lambda, X_s) \, M (ds, d \lambda), \quad t \in \bR_+,
\] 
where \(f, g\) are from Standing Assumption~\ref{SA: main2}~(b). 

\begin{lemma} \label{lem: lower expectation} 
	The map \((t, x, Q) \mapsto E^Q [ \Gamma^f_t ]\) is lower semicontinuous on the graph \(\on{gr}\, (\cK)\).
\end{lemma} 
\begin{proof}
	Take \((t^n, x^n, Q^n)_{n = 0}^\infty \in \on{gr} \, (\cK)\) with \((t^n, x^n, Q^n) \to (t^0, x^0, Q^0)\). 
	By Skorokhod's coupling theorem (\cite[Theorem~5.31]{Kallenberg}), there are \(\Omega \times \m\)-valued random variables \((X^n, M^n)_{n = 0}^\infty\) such that \((X^n, M^n)\) has law \(Q^n\) and a.s. \((X^n, M^n) \to (X^0, M^0)\). By Lemma~\ref{lem: st cont}, a.s. 
	\[
	\tau^{t^n}_D (X^n) \to \tau^{t^0}_D (X^0).
	\] 
	Now, as \(f, g\) is bounded and lower semicontinuous, we deduce from \cite[Theorem~8.10.61]{bogachev} that a.s. 
	\begin{align*}
	\liminf_{n \to \infty} \Big( f \big(X^n_{\tau^{t^n}_D (X^n)}\big ) &+ \int_0^{\tau^{t^n}_D (X^n)} \int g (\lambda, X^n_s) \, M^n (d s, d \lambda) \Big) 
	\\&\leq f\big (X^0_{\tau^{t^0}_D (X^0)} \big)+ \int_0^{\tau^{t^0}_D (X^0)} \int g (\lambda, X^0_s) \, M^0 (ds, d \lambda). 
	\end{align*} 
	By \eqref{eq: bound stopping time}, we have
	\[
	E^{Q^n} \big[ \big|\tau^{t^n}_D \big|^2\big] = E^{Q^n \, \circ\, \theta_{t^n}^{-1}} \big[ \big| \tau_D + t^n\big|^2 \big] \leq \on{const} \, (d, \delta, \|\b\|_{L_d (\bR^d)}, \sup\nolimits_{n \geq 0} t^n), 
	\] 
	and consequently, 
	\[
	E \big[ \tau^{t^n}_D (X^n) \big] \to E \big[ \tau^{t^0}_D (X^0) \big]. 
	\] 
	Finally, Fatou's lemma applied to the non-negative random variable
	\[
	f \big(X^n_{\tau^{t^n}_D (X^n)}\big ) + \int_0^{\tau^{t^n}_D (X^n)} \int g (\lambda, X^n_s) \, M^n (d s, d \lambda) + \|f\|_\infty + \|g\|_\infty \, \tau^{t^n}_D (X^n)
	\] 
	yields the claimed lower semicontinuity. 
\end{proof}

We define
\[
\cK^* (t, x) := \Big\{ P \in \cK (t, x) \colon E^P \big[ \Gamma_t^f \big] = \inf_{Q \in \cK (t, x)} E^Q \big[ \Gamma_t^f \big] \Big\}, \quad (t, x) \in \bR_+ \times \oD.
\]
Following standard terminology (e.g., \cite[Definition~18.1]{charalambos2013infinite}), a set-valued map between topological spaces is called measurable if the lower inverse of any closed set is Borel. Trivially, upper hemicontinuous set-valued maps are measurable.  
If a set-valued map \(\kappa \colon G \twoheadrightarrow H\) has non-empty compact values, it can be viewed as a map \(\kappa^*\) from \(G\) into the compact subsets of \(H\), denoted \(\on{comp} (H)\), with the Hausdorff metric topology, and \(\kappa\) is measurable if and only if \(\kappa^*\) is Borel, see \cite[Theorem~18.10]{charalambos2013infinite}. 
We refer to \cite[Chapter~18]{charalambos2013infinite} and \cite{himmelberg} for profound discussions of measurability of set-valued maps.

\begin{lemma} \label{lem: K initial}
	\begin{enumerate}
		\item[\textup{(i)}] \(\cK^*\) is measurable with non-empty compact values.
		\item[\textup{(ii)}] \(\cK^*\) is stable under conditioning and pasting.
		\item[\textup{(iii)}] For all \((t, x) \in \bR_+ \times \oD\), \(P \in \cK^* (t, x)\) implies that \(P \circ \theta^{-1}_t \in \cR^* (x)\).
	\end{enumerate}
\end{lemma}
\begin{proof}
	(i) As \(\cK\) is upper hemicontinuous with compact values by Lemma~\ref{lem: upper hemi}, \(\on{gr} \, (\cK)\) is closed by \cite[Theorem~17.10]{charalambos2013infinite}. Consequently, it is a Polish space. 
	Recall that \((t, x, Q) \mapsto E^Q [ \Gamma^D_t ]\) is lower semicontinuous on \(\on{gr}\, (\cK)\) by Lemma~\ref{lem: lower expectation}.
	Now, \cite[Lemma~12.1.7]{SV} yields that the set-valued map 
	\[
	\on{comp} ( \on{gr} \, (\cK)) \ni K \mapsto \mathfrak{j} (K) := \Big\{ (t, x, P) \in K \colon E^P \big[ \Gamma^f_t \big] = \inf_{(s, y, Q) \in K} E^Q \big[ \Gamma^f_s \big] \Big\} 
	\] 
	is measurable with non-empty compact values. Using \cite[Theorems~18.5, 18.10]{charalambos2013infinite}, the set-valued map 
\(
		(t, x) \mapsto \mathfrak{h} (t, x) := \{ (t, x) \} \times \cK (t, x) \in \on{comp} ( \on{gr} \, (\cK))
\)
	is measurable. Consequently, also
	\(
	(t, x) \mapsto \mathfrak{j} (\mathfrak{h} (t, x)) = \{ (t, x) \} \times \cK^* (t, x)
	\)
	is measurable with non-empty compact values and, using once again \cite[Theorems~18.5, 18.10]{charalambos2013infinite}, the same is true for \(\cK^*\).

	\smallskip
	(ii) The proof is similar to \cite[Lemma~3.4]{Haus86} and \cite[Lemmata~5.7, 5.8]{HausLep90}, which adapt ideas from \cite[Lemma 12.2.2]{SV}. We omit the details for brevity.
	
	\smallskip
	(iii) 
	Recall that \(\theta \circ \gamma = \on{id}\) and notice \(P\)-a.s. \(\Gamma_0^{f}\, \circ\, \theta_t = \Gamma_t^{f}\) for all \(P \in \cK (t, x)\). Using Lemma~\ref{lem: shift}, we get that 
	\[
		\inf_{P \in \cK (t, x)} E^P \big[ \Gamma_t^{f} \big] = \inf_{P \in \cK (x)} E^P \big[ \Gamma_0^{f} \big], 
	\]
	which yields the claim. 
\end{proof}

For a function \(\ell \in C_b (\bR^d)\) and a constant \(\alpha > 0\), we define
\begin{align*}
	\Xi_t (\ell, \alpha) := \int_{t}^{\tau_D^t} e^{- \alpha s} \ell (X_{s}) ds, \quad t \in \bR_+.
\end{align*} 
The following lemma can be proved in the same manner as Lemma~\ref{lem: K initial}. We omit the details for brevity. 

\begin{lemma} \label{lem: K indu}
	Let \(\bR_+ \times \oD \ni (t, x) \mapsto \cU (t, x)\) be a measurable set-valued map with non-empty compact values that is stable under conditioning and pasting.
	Furthermore, assume the following:
	\begin{enumerate}
		\item[\textup{(a)}] \(P \in \cU (t, x)\) implies that \(P \circ \theta^{-1}_t \in \cU(0, x)\).
		\item[\textup{(b)}] \(P \in \cU (0, x)\) implies that \(P \circ \gamma^{-1}_t \in \cU (t, x)\). 
		\item[\textup{(c)}] \(P (\tau_D < \infty) = 1\) for all \(P \in \cU (0, x)\) and \(x \in \oD\).
		\item[\textup{(d)}] \(P (\tau_{\oD} = 0) = 1\) for all \(P \in \cU (0, x)\) and \(x \in \partial D\).
	\end{enumerate} 
	Define the set-valued map
	\[
	\bR_+ \times \oD \ni	(t, x) \mapsto \mathcal{T}^* (t, x) := \Big\{ P \in \cU (t, x) \colon E^P \big[ \Xi_t (\ell, \alpha) \big] = \inf_{Q \in \cU (t, x)} E^Q \big[ \Xi_t (\ell, \alpha) \big] \Big\}.
	\]
	Then, the following hold:
	\begin{enumerate}
		\item[\textup{(i)}] \(\mathcal{T}^*\) is measurable with non-empty compact values.
		\item[\textup{(ii)}] \(\mathcal{T}^*\) is stable under conditioning and pasting.
		\item[\textup{(iii)}] For all \((t, x) \in \bR_+ \times \oD\), \(P \in \mathcal{T}^* (t, x)\) implies that \(P \circ \theta^{-1}_t \in \mathcal{T}^* (0, x)\).
	\end{enumerate}
\end{lemma}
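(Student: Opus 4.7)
The plan is to mimic the three-step proof of Lemma \ref{lem: K initial} verbatim, with $\cK$ replaced by $\cU$, $\cK^*$ by $\mathcal{T}^*$, and the cost $\Gamma_t^D$ replaced by $\Xi_t(\ell, \alpha)$. The stability properties of $\cK$ used in that argument are now simply hypotheses (a)--(d), so the only real work is to produce the two handles on $\Xi$ that drove the old proof: an additive decomposition along stopping times and a shift rule.

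For the first handle, I would observe that for any $(\cF_s)_{s \geq 0}$-stopping time $\tau$ with $t \leq \tau \leq \tau_D^t$, one has $\tau_D^\tau = \tau_D^t$ and therefore
$$\Xi_t(\ell, \alpha) = \int_t^\tau e^{-\alpha s} \ell(X_s)\, ds + \Xi_\tau(\ell, \alpha),$$
with the first summand being $\cG_\tau$-measurable. This is the analog of the concatenation identity used in Step~2 of Lemma \ref{lem: K initial}. Inserted into the conditioning argument, it shows that on the $\cG_\tau$-measurable set $A$ where $P(\,\cdot\mid\cG_\tau)(\overline{\omega})$ would fail optimality, swapping it for a measurable selector from $\mathcal{T}^*(\tau(\overline{\omega}), X_\tau(\overline{\omega}))$ (which exists by (i)) would strictly improve $E^P[\Xi_t(\ell, \alpha)]$, contradicting $P \in \mathcal{T}^*(t, x)$; hence $P(A) = 0$. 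Stability under pasting then follows from the same decomposition together with the tower property, exactly as in Lemma \ref{lem: K initial}.

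For the shift-compatibility (iii), the second handle is the identity
$$\Xi_t(\ell, \alpha) = e^{-\alpha t}\, \Xi_0(\ell, \alpha) \circ \theta_t,$$
obtained by a change of variables together with $\tau_D^t = t + \tau_D \circ \theta_t$. Combined with hypotheses (a), (b) and Lemma \ref{lem: shift} (noting $\theta_t \circ \gamma_t = \on{id}$), the argument of Step~3 of Lemma \ref{lem: K initial} gives
$$\inf_{P \in \cU(t, x)} E^P\bigl[\Xi_t(\ell, \alpha)\bigr] = e^{-\alpha t} \inf_{Q \in \cU(0, x)} E^Q\bigl[\Xi_0(\ell, \alpha)\bigr],$$
from which (iii) is immediate.

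The only remaining point is part (i). Nonemptiness and compactness of $\mathcal{T}^*(t, x)$ reduce to the continuity of $(t, x, P) \mapsto E^P[\Xi_t(\ell, \alpha)]$ on $\on{gr}(\cU)$, and measurability then follows from the same $\mathfrak{h}, \mathfrak{j}$ composition used in Step~1 of Lemma \ref{lem: K initial}. The continuity itself is established exactly as in Lemma \ref{lem: continuity} via the continuous mapping theorem and the $P$-almost sure continuity of $\tau_D^t$. I expect this last point to be the main technical obstacle, since the proof of Lemma \ref{lem: continuity} relied on Condition \ref{cond: continuity D} and Lemma \ref{lem: shift}; in the present abstract setting these are replaced by hypotheses (d) and (a)--(b) respectively, but the sandwich $\tau_D \leq \tau_{\oD}$ together with upper semicontinuity of $\tau_{\oD}$ and lower semicontinuity of $\tau_D$ carries over without change.
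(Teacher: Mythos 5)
Your proposal is correct and follows the route the paper intends: the paper itself omits the proof, stating only that it ``can be proved in the same manner as Lemma~\ref{lem: K initial}.'' Your two handles — the additive decomposition \(\Xi_t(\ell,\alpha) = \int_t^\tau e^{-\alpha s}\ell(X_s)\,ds + \Xi_\tau(\ell,\alpha)\) (valid since \(\tau_D^\tau = \tau_D^t\) for \(t \le \tau \le \tau_D^t\)) and the shift identity \(\Xi_t(\ell,\alpha) = e^{-\alpha t}\,\Xi_0(\ell,\alpha)\circ\theta_t\) — are exactly the analogues of the concatenation and shift formulas used for \(\Gamma_t^D\), and the multiplicative factor \(e^{-\alpha t}\) in the latter is harmless because it is deterministic and independent of the measure, so it does not change which measures attain the infimum. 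For part (i), note that since \(\Xi_t(\ell,\alpha)\) can be written as \(g(x,t,\tau_D^t,\cdot)\) with \(g(x,t,r,\bar\omega) = \int_t^r e^{-\alpha s}\ell(X_s(\bar\omega))\,ds\) bounded and jointly continuous, the argument of Lemma~\ref{lem: continuity} applies with hypotheses (a)--(d) in place of Condition~\ref{cond: continuity D} and Lemma~\ref{lem: shift}, precisely as you say.
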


\subsubsection{Proof of Theorem~\ref{theo: SMS}}
	The basic idea has been established in \cite{krylov_selection}, cf. also the proofs of \cite[Theorems~6.2.3, 12.2.3]{SV}, \cite[Proposition 6.6]{nicole1987compactification}, \cite[Proposition 3.2]{Haus86} and \cite[Propositions~5.9, 5.14]{HausLep90}. As the presence of the stopping times \(\tau^t_D\) requires a careful treatment, we provide the details of the proof. 
	
Let \(\{\sigma_n \colon n \in \mathbb{N}\}\) be a dense subset of \((0, \infty)\) and let \(\{\phi_n \colon n \in \mathbb{N}\}\) be a dense (for the uniform norm) subset of \(C_c (\bR^d)\). Furthermore, let \((\alpha_N, \ell_N)_{N = 1}^\infty\) be an enumeration of \(\{(\sigma_m, \phi_n) \colon n, m \in \mathbb{N}\}\). 
For \((t, x) \in \bR_+ \times \oD\), define inductively  
\[
\cK^*_0 (t, x) := \cK^* (t, x),
\]
and, for \(N \in \mathbb{Z}_+\), 
\[
\cK^*_{N + 1} (t, x) := \Big\{ P \in \cK^*_N (t, x) \colon E^P \big[ \Xi_t (\ell_{N + 1}, \alpha_{N + 1}) \big] = \inf_{Q \in \cK^*_N (t, x)} E^Q \big[ \Xi_t (\ell_{N + 1}, \alpha_{N + 1}) \big] \Big\}.
\]
Moreover, we set \[\cK^*_\infty (t, x) := \bigcap_{N = 0}^\infty \cK_N (t, x).\]
Notice that \(\cK (t, x)\) is convex.
Further, by Lemmata~\ref{lem: K initial} and \ref{lem: K indu} and \cite[Lemma~18.4]{charalambos2013infinite}, the set-valued map \(\cK^*_\infty\) is measurable with non-empty, compact and convex values, and it is stable under conditioning. Here, the fact that \(\cK^* (t, x)\) is non-empty follows from Cantor's intersection theorem, which states that the intersection of a nested sequence of non-empty compact sets is non-empty. 

As a consequence of the measurability, \cite[Corollary~18.15]{charalambos2013infinite} (or \cite[Theorem~12.1.10]{SV})
yields the existence of a measurable map \(\oD\ni x \mapsto Q_x\) with \(Q_x \in \cR^*_\infty (x) := \cK^*_\infty(0, x)\). Since \(\cR^*_\infty (x) \subset \cR^* (x)\), it remains to prove that \(\{Q_x \circ X^{-1}_{\cdot \wedge \tau_D} \colon x \in \oD\}\) is a strong Markov family. 

\smallskip 
For \((t, x) \in \bR_+ \times \oD\), we set 
\begin{align*}
	\cI^*_\infty (t, x) := \big\{ Q \circ X^{-1}_{\cdot \wedge \tau^t_D} \colon Q \in \cK^*_\infty (t, x) \big\} \subset \cP (\Omega). 
\end{align*}
We now show that each \(\cI^*_\infty (t, x)\) is a singleton, i.e., \(\cI^*_\infty (t, x) = \{P_{(t, x)}\}\). 

Take \((t, x) \in \bR_+ \times \oD\) and \(P, Q \in \cK^*_\infty(t, x)\).
For all \(t \leq t_1 < \dots < t_n\) and \(g_1, \dots, g_n \in C_c (\bR^d)\), we will prove that 
\begin{align*}
	E^P \Big[ \prod_{i = 1}^n  g_i (X_{t_i }) \1_{\{t_n \leq \tau_D^t\}} \Big] = E^Q \Big[ \prod_{i = 1}^n g_i (X_{t_i}) \1_{\{t_n \leq \tau^t_D\}}\Big].
\end{align*}
By a monotone class argument, this proves that \(P \circ X^{-1} = Q \circ X^{-1}\) on \(\cF_{\tau^t_D - }\), cf. \cite[Definition~IV.54.2]{DM}. 
As \(X\) is a predictable process (because it has continuous paths), it follows from \cite[Proposition~I.2.4]{JS} that all random variables of the form 
\[
g (X_{s_1 \wedge \tau^t_D}, \dots, X_{s_m \wedge \tau^t_D}), \quad g \in C_b (\bR^{dm}; \bR), \, s_1, \dots, s_m \in \bR_+, \, m \in \mathbb{N}, 
\] 
are \(\cF_{\tau_D^t-}\)-measurable.\footnote{In view of \cite[Lemma~1.3.3]{SV}, we also get that \(\cF_{\tau^t_D-} = \cF_{\tau^t_D} = \sigma (X_{s \wedge \tau^t_D}, s \geq 0)\). This precise identity is not needed for our argument.}  Hence, again by a monotone class argument, \(P \circ X^{-1} = Q \circ X^{-1}\) on \(\cF_{\tau^t_D - }\) already implies \[P \circ X_{\cdot \wedge \tau_D^t}^{-1} = Q \circ X_{\cdot \wedge \tau_D^t}^{-1},\] which shows that \(\mathcal{I}^*_\infty (t, x)\) is a singleton.

\smallskip
We proceed by induction. 
For all \(N \geq 1\), by definition of \(\cK^*_\infty\), 
\begin{align*}
	E^{P} \big[ \Xi_t (\ell_{N}, \alpha_N) \big] 
	= \inf_{R\, \in\, \cK^*_{N - 1} (t, x)} E^R \big[ \Xi_t (\ell_{N}, \alpha_N) \big] 
	= E^{Q} \big[ \Xi_t (\ell_N, \alpha_N) \big].
\end{align*}
This means that, for all \(n, m \geq 1\), 
\begin{align*}
	\int_t^\infty e^{- \sigma_m s}E^P \big[ \phi_n (X_{s}) \1_{\{s \le \tau^t_D\}}\big] ds = \int_t^\infty e^{- \sigma_m s}E^Q \big[ \phi_n (X_{s}) \1_{\{s \leq \tau^t_D\}} \big] ds.
\end{align*}
By the left-continuity of \(s \mapsto E^P [\phi_n (X_{s}) \1_{\{s \leq \tau^t_D\}} ]\) and \(s \mapsto E^Q [ \phi_n (X_{s}) \1_{\{s \leq \tau^t_D\}} ]\), the uniqueness of the Laplace transform shows that 
\begin{align*}
	E^P \big[\phi_n (X_{s}) \1_{\{s \leq \tau^t_D\}}\big] = E^Q \big [\phi_n (X_{s}) \1_{\{s \leq \tau^t_D\}}\big], \quad n \geq 1, s \geq t.
\end{align*}
This implies the induction base. 

Next, we establish the induction step. 
Take \(t \leq t_1 < \dots < t_{n + 1}\), \(g_1, \dots, g_{n + 1} \in C_c (\bR^d)\) and assume that the claim holds for \(n\).
Let \(\mathcal{V}_n\) be the \(\sigma\)-field generated by 
\[
A \cap \{t_n \leq \tau^{t}_D\}, \ \ A \in \sigma (X_{t_1}, \dots, X_{t_n}). 
\] 
Since 
\begin{align*}
	E^{P} \Big[ \prod_{k = 1}^{n + 1} g_k & (X_{t_k}) \1_{\{t_{n +1} \leq \tau^t_D\}} \Big] 
	\\&= E^{P} \Big[ E^{P} \big[ g_{n + 1} (X_{t_{n + 1}}) \1_{\{t_{n + 1} \leq \tau^{t_n}_D\}} \, |\, \mathcal{V}_n \big] \prod_{k = 1}^n g_k (X_{t_k}) \1_{\{t_n \leq \tau^t_D\}} \Big], 
\end{align*} 
it suffices to show that \(P\)-a.s. on \(\{t_n \leq \tau^t_D\}\)
\[
E^{P} \big[ g_{n + 1} (X_{t_{n + 1}}) \1_{\{t_{n + 1} \leq \tau^{t_n}_D\}} \, |\, \mathcal{V}_n \big] = E^{Q} \big[ g_{n + 1} (X_{t_{n + 1}}) \1_{\{t_{n + 1} \leq \tau^{t_n}_D\}} \, |\, \mathcal{V}_n \big].
\]
To ease our notation, set \(\tau := t_n \wedge \tau^t_D\) and let \(\Pi_{\overline{\omega}} \equiv \Pi^{\tau (\overline{\omega})}_{\overline{\omega}}\) be as in \eqref{eq: Pi kernel}. 
As \(\cK^*_\infty\) is stable under conditioning, there exists a \(P\)-null set \(N_1 \in \cG_{\tau}\) such that \(\Pi_{\overline{\omega}} \otimes_{\tau (\overline{\omega})} P (\, \cdot\mid \cG_{\tau}) (\overline{\omega}) \in \cK^*_\infty (\tau (\overline{\omega}), (X_{\tau})(\overline{\omega}))\) for all \(\overline{\omega} \not \in N_1\). 
By the tower rule, there exists a \(P\)-null set \(N_2 \in \mathcal{V}_n\) such that, for all \(\overline{\omega} \in N_2^c \cap \{t_n \leq \tau^t_D\}\) and \(A \in \cF\), 
\[
P (\{ \widetilde{\omega} \in \Omega \times \m \colon X_{t_n} (\widetilde{\omega}) = X_{t_n} (\overline{\omega}), t_n \leq \tau^t_D (\widetilde{\omega})\} \mid \mathcal{V}_n ) (\overline{\omega}) = 1\]
and 
\begin{equation} \label{eq: condi}
	\begin{split}
		\int \, (\Pi_{\overline{\omega}'} \otimes_{\tau (\overline{\omega}')} P (\, \cdot \mid \cG_{\tau}) (\overline{\omega}')) & (A \times \m) P (d \overline{\omega}' \mid \mathcal{V}_n) (\overline{\omega}) 
		\\&= (\Pi_{\overline{\omega}} \otimes_{t_ n} P (\, \cdot\mid \mathcal{V}_n) (\overline{\omega})) (A \times \m).
	\end{split}
\end{equation}
Let \(N_3 := \{P (N_1 \mid \mathcal{V}_n) > 0\} \in \mathcal{V}_n\). Clearly, \(E^P [ P (N_1 \mid \mathcal{V}_n)] = P(N_1) = 0\), which implies that \(P (N_3) = 0\). Take \(\overline{\omega} \in N_2^c \cap N_3^c \cap \{t_n \leq \tau^t_D\}\).
Then, for \(P (\, \cdot \mid \mathcal{V}_n) (\overline{\omega})\)-a.a. \(\overline{\omega}' \in \Omega \times \m\),
\[
\Pi_{\overline{\omega}'} \otimes_{\tau (\overline{\omega}')} P (\, \cdot\mid \cG_{\tau}) (\overline{\omega}') \in \cK^*_\infty (\tau (\overline{\omega}'), (X_\tau) (\overline{\omega}')) = \cK^*_\infty (t_n, X_{t_n} (\overline{\omega})).
\]
As \(\cK^*_\infty\) has convex and compact values, still for \(\overline{\omega} \in N_2^c \cap N_3^c \cap \{t_n \leq \tau^t_D\}\), we get that   
\begin{align*}
	\int (\Pi_{\overline{\omega}'} \otimes_{\tau (\overline{\omega}')} P (\, \cdot \mid \cG_{\tau}) (\overline{\omega}')) &(\ \cdot \, \times \m) P (d \overline{\omega}' \mid \mathcal{V}_n) (\overline{\omega})  
	\\&\in \big\{ P^* \circ X^{-1} \colon P^* \in \cK^*_\infty (t_n,  X_{t_n} (\overline{\omega})) \big\}, 
\end{align*}
and, by virtue of \eqref{eq: condi},
\[
(\Pi_{\overline{\omega}} \otimes_{t_n} P (\, \cdot\mid  \mathcal{V}_n) (\overline{\omega})) (\, \cdot \, \times \m) \in \big\{ P^* \circ X^{-1} \colon P^* \in \cK^*_\infty (t_n,  X_{t_n} (\overline{\omega})) \big\}.
\]
Similarly, there exists a \(Q\)-null set \(N_4 \in \mathcal{V}_n\) such that
\[
(\Pi_{\overline{\omega}} \otimes_{t_n} Q (\, \cdot\mid \mathcal{V}_n) (\overline{\omega})) (\, \cdot \, \times \m) \in \big\{ P^* \circ X^{-1} \colon P^* \in \cK^*_\infty (t_n,  X_{t_n} (\overline{\omega})) \big\}
\]
for all \(\overline{\omega} \in N^c_4 \cap \{t_n \leq \tau^t_D\}\). 
Set \(N := N_2 \cup N_3 \cup N_4\). As \(P = Q\) on \(\mathcal{V}_n\) by the induction hypothesis, we get that \(P (N) = 0\). For all \(\overline{\omega} \in N^c \cap \{t_n \leq \tau^t_D\}\), the induction base implies that 
\begin{align*}
	E^P\big[ g_{n + 1} (X_{t_{n + 1}}) \1_{\{t_{n + 1} \leq \tau^{t_n}_D\}} \mid \mathcal{V}_n\big] (\overline{\omega}) &= E^{\Pi_{\overline{\omega}} \, \otimes_{t_n} P (\, \cdot\, | \mathcal{V}_n) (\overline{\omega})} \big[ g_{n + 1} (X_{t_{n + 1}}) \1_{\{t_{n + 1} \leq \tau^{t_n}_D\}} \big] 
	\\&= E^{\Pi_{\overline{\omega}} \, \otimes_{t_n} Q (\, \cdot\, | \mathcal{V}_n) (\overline{\omega})} \big[ g_{n + 1} (X_{t_{n + 1}}) \1_{\{t_{n + 1} \leq \tau^{t_n}_D\}}\big]
	\\&= E^Q\big[ g_{n + 1} (X_{t_{n + 1}}) \1_{\{t_{n + 1} \leq \tau^{t_n}_D\}} \mid \mathcal{V}_n\big] (\omega).
\end{align*}
The induction step is complete and hence, \(\mathcal{I}^*_\infty (t, x)\) is a singleton.

\smallskip
As above, we write \(\cI^*_\infty (t, x) = \{P_{(t, x)}\}\). Clearly, \(Q_x \circ X^{-1}_{\cdot \wedge \tau_D} = P_{(0,x)} =: P_x\). 
It remains to prove that \(\{P_x \colon x \in \oD\}\) is a strong Markov family. Take an initial value \(x \in \oD\), a finite stopping time \(\rho\), and a set \(G \in \cF\).
We get from Eq. 6) on p. 103 in \cite{ito} that
\begin{align*}
	X_{\cdot + \rho} \circ X_{\cdot \wedge \tau_D} = X_{\cdot \wedge \tau_D} \circ \theta_{\tau'}, \quad \tau' := \rho \wedge \tau_D.
\end{align*}
Let \(\Pi^{\tau'}\) be as in \eqref{eq: Pi kernel}.
Then, for every \(A \in \cF_{\rho}\), we have \(\{ X_{\cdot \wedge \tau_D} \in A\} \in \cG_{\tau'}\) and 
\begin{align*}
	E^{P_x} \big[  \1_{A} \1_{\{X_{\cdot + \rho} \, \in \, G\}} \big] 
	&= E^{Q_x} \big[ \1_{\{X_{\cdot \wedge \tau_D} \in A\}} \1_{\{X_{\cdot \wedge \tau_D} \, \circ\, \theta_{\tau'}\, \in\, G\}} \big]
	\\&= E^{Q_x} \big[ \1_{\{X_{\cdot \wedge \tau_D} \in A\}} E^{Q_x} \big[ \1_{\{X_{\cdot \wedge \tau_D}\, \circ \, \theta_{\tau'}\, \in\, G\}} \mid \cG_{\tau'} \big] \big]
	\\&= E^{Q_x} \big[ \1_{\{X_{\cdot \wedge \tau_D} \in A\}} (\Pi^{\tau'} \otimes_{\tau'} Q_x (\, \cdot \mid \cG_{\tau'}) \, \circ \theta_{\tau'}^{-1} \circ X^{-1}_{\cdot \wedge \tau_D} ) (G) \big].
\end{align*}
Now, \(Q_x\)-a.s. \(\Pi^{\tau'} \otimes_{\tau'} Q_x (\, \cdot \mid \cG_{\tau'} ) \in \cK^*_\infty (\tau', X_{\tau'})\), as \(\cK^*_\infty\) is stable under conditioning. Notice that \(R \in \cK^*_\infty (s, y)\) implies \(R \circ \theta_s^{-1} \in \cR^*_\infty (y)\), as all \(\cK^*_N, \, N = 1, 2, \dots,\) have this property by Lemmata~\ref{lem: K initial} and \ref{lem: K indu}. Hence, \(Q_x\)-a.s. \(\Pi^{\tau'} \otimes_{\tau'} Q_x (\, \cdot \mid\cG_{\tau'}) \, \circ \theta_{\tau'}^{-1} \in \cR^*_\infty (X_{\tau'})\) and in turn, 
\[ 
\Pi^{\tau'} \otimes_{\tau'}  Q_x (\, \cdot \mid \cG_{\tau'}) \, \circ \theta_{\tau'}^{-1} \, \circ X^{-1}_{\cdot \wedge \tau_D} \in \cI^*_\infty (0, X_{\tau'}) = \{P_{X_{\tau'}}\}.
\]
Summing up, we proved that 
\begin{align*}
	E^{P_x} \big[ \1_A \1_{\{ X_{\cdot + \rho} \, \in \, G \}} \big] = E^{Q_x} \big[ \1_{\{X_{\cdot \wedge \tau_D} \, \in \, A\}} \, P_{X_{\tau'}} (G) \big].
\end{align*}
Finally, using that 
\(
X_{\tau'} = X_{\rho} \circ X_{\cdot \wedge \tau_D},
\) 
by \cite[Problem~6, p. 88]{ito}, we conclude that 
\begin{align*} 
	E^{P_x} \big[ \1_A \1_{\{ X_{\cdot + \rho} \, \in \, G \}} \big] &= E^{Q_x} \big[ \1_{\{X_{\cdot \wedge \tau_D} \, \in \, A\}} \, P_{X_\rho \, \circ \, X_{\cdot \wedge \tau_D}} (G) \big]
	= E^{P_x} \big[ \1_A P_{X_\rho} (G) \big].
\end{align*} 
This proves the strong Markov property and thus finishes the proof.
\qed

\subsection{Regularity estimates}
Next, we provide interior and boundary regularity estimates that we use to prove Theorem~\ref{theo: main continuity}. 
In the following two theorems, let \((P_x)_{x \in \oD}\) be the strong Markov selection from Theorem~\ref{theo: SMS}. Furthermore, take a bounded Borel function \(g^* \colon \oD\to \bR\) and define 
\begin{align} \label{eq: u in proof}
u (x) := E^{P_x} \Big[ f (X_{\tau_D}) + \int_0^{\tau_D} g^* (X_s) \, ds \Big]
\end{align} 
for \(x \in \oD\). This function is well-defined by \eqref{eq: bound stopping time}.

\begin{theorem} \label{theo: krylov interior} 
Let \(R > 0\) and \(x_0 \in D\) such that \(B_{2R} (x_0) \subset \subset D\). There exist two constants \(C = C (d, \delta, \|\b\|_{L_d (\bR^d)}, \on{diam}\, (D)) > 0\) and \(\alpha = \alpha (d, \delta, \|\b\|_{L_d (\bR^d)}) \in (0, 1)\) such that 
\begin{align} \label{eq: holder interior}
| u (x) - u (y) | \leq C R^{- \alpha} \, \| x - y \|^\alpha \Big( \sup_{\oD} |u| + \|g^*\|_{\infty} \Big)
\end{align} 
for all \(x, y \in B_R (x_0)\). 
\end{theorem} 
\begin{proof}
	Our strategy is as follows: First, we show that under each \(P_x\) the coordinate map \(X\) has an SDE representation till time \(\tau_D\). Second, we explain how to adapt results from \cite{krylov_PTRF_21} to conclude the claimed interior regularity estimate. 
	
	\smallskip
	\noindent	
	{\em Step 1:}
	The following uses some ideas from~\cite{Haus86,HausLep90}. 
	It follows from \cite[Corollary~1.2]{krylov_21} that
	\begin{align} \label{eq: second moment bound} 
	E^{P_x} \Big[ \sup_{s \in [0, T]} \|X_s\|^2 \Big] < \infty, \quad T > 0, \ x \in \oD.
	\end{align} 
	For \(y \in \oD\), set 
	\begin{align*}
	b^* (y) := \begin{cases} \liminf_{n \to \infty} n E^{P_y} \big[ X_{\tau_D \wedge 1/n} - X_0 \big], & \text{if the \(\liminf\) is in \(\{z \colon \|z\| \leq \b (z) \}\)}, \\ 0, &\text{otherwise},\end{cases} 
	\end{align*}
	and, for some \(A_0 \in \mathbb{S}_\delta\), 
	\begin{align*}
	a^*_{ij} (y) &:= \begin{cases}
		\lim_{n \to \infty} n E^{P_y} \big[ (X^{(i)}_{\tau_D \wedge 1/n} - X^{(i)}_0) (X^{(j)}_{\tau_D \wedge 1/n} - X^{(j)}_0) \big],
	& \text{if the \(\liminf\) (seen}\\&\text{as matrix) is in \(\mathbb{S}_\delta\)}, \\ A_0, &\text{otherwise},
	\end{cases} 
	\end{align*}
	where the \(\sup\) is taken over all \((\lambda, x) \in \Lambda \times \oD\).
	Notice that the \(b^*\) and \(a^*\) are well-defined by~\eqref{eq: second moment bound}.
	We obtain from \cite[Theorem~1.1]{krylov_21} that 
	\begin{equation} \label{eq: bound for fubini} 
		\begin{split}
		E^{P_x} \Big[ \int_0^{\tau_D} \int \| b (\lambda, X_s) \| \, \mathsf{m}_s (d \lambda) \, ds \Big] &\leq E^{P_x} \Big[ \int_0^{\tau_D} \| \b (X_s) \| \, ds \Big] 
		\\&\leq \on{const} (d, \delta, \| \b \|_{L_d (\bR^d)}, \on{diam} (D)) < \infty. 
		\end{split} 
\end{equation} 
	For \(x \in \oD\), using the Markov property of \((P_z \circ X_{\cdot \wedge \tau_D}^{-1})_{z \in \oD}\), Fubini's theorem (which we can apply thanks to \eqref{eq: bound for fubini}), we obtain, \(P_x\)-a.s. for Lebesgue a.a. \(t < \tau_D\),
	\begin{equation*} \begin{split}
		n E^{P_{X_t}} \big[ X_{\tau_D \wedge 1/n} - X_0 \big] &= n E^{P_{x}} \big[ X_{\tau_D \wedge (t + 1/n)} - X_t \mid X^{-1} (\cF_t) \big] \phantom \int
		\\&= n E^{P_{x}} \Big[ \int_t^{\tau_D \wedge (t + 1/n)}  \int b (\lambda, X_s) \, \mathsf{m}_s (d \lambda) \, ds \mid X^{-1} (\cF_t) \Big] 
		\\&= n \int_t^{t + 1/n} E^{P_{x}} \Big[ \int b (\lambda, X_s) \, \mathsf{m}_s (d \lambda) \1_{\{s \leq \tau_D\}} \mid X^{-1} (\cF_t) \Big] \, ds
		\\&\xrightarrow{\quad} E^{P_x} \Big[ \int b (\lambda, X_t) \, \mathsf{m}_t (d \lambda) \mid X^{-1} (\cF_t) \Big], \quad n \to \infty,
	\end{split} \end{equation*} 
	which means that, \(P_x\)-a.s. for Lebesgue a.e. \(t < \tau_D\),
	\begin{align} \label{eq: b indetity imp}
		b^* (X_t) = E^{P_x} \Big[ \int b (\lambda, X_t) \, \mathsf{m}_t (d \lambda) \mid X^{-1} (\cF_t) \Big].
	\end{align} 
	Similarly, we obtain that, \(P_x\)-a.s. for Lebesgue a.e. \(t < \tau_D\), 
	\[
	a^* (X_t) = E^{P_x} \Big[ \int a (\lambda, X_t) \, \mathsf{m}_t (d \lambda) \mid X^{-1} (\cF_t) \Big]. 
	\] 
	By the definition of \(\cR (x)\), \(x \in \oD\), and \cite[Theorem~II.2.42]{JS}, the process \(X_{\cdot \wedge \tau_D}\) is a \(P_x\)-\((\cG_t)_{t \geq 0}\)-semimartingale whose semimartingale characteristics \((B, C)\) are given by 
	\[
	B_t = \int_0^{t \wedge \tau_D} \int b (\lambda, X_s) \, \mathsf{m}_s (d \lambda) \, ds, \quad C_t = \int_0^{t \wedge \tau_D} \int a (\lambda, X_s) \, \mathsf{m}_s (d \lambda) \, ds , \quad t \in \bR_+.
	\]
	Thanks to Stricker's lemma (\cite[Theorem~9.19]{jacod79}), the stopped process \(X_{\cdot \wedge \tau_D}\) is a \(P_x\)-\(X^{-1}((\cF_t)_{t \geq 0})\)-semimartingale and its semimartingale characteristics (for the new filtration \(X^{-1} ((\cF_t)_{t \geq 0})\)) are given by the dual predictable projection (\cite[Theorem~1.38]{jacod79}) of \((B, C)\) to the filtration \(X^{-1} ((\cF_t)_{t \geq 0})\), cf.  \cite[Proposition~9.24]{jacod79}.
	Thanks to \cite[(1.40)]{jacod79}, we have \(P_x\)-a.s.
	\begin{align*}
		\Big( \int_0^{\cdot \wedge \tau_D}\int b (\lambda, X_s) \, \mathsf{m}_s (d \lambda) \, ds \Big)^p = \int_0^{\cdot \wedge \tau_D}  \hspace{-0.2cm} {\phantom{\Big(}}^p\Big( \int b (\lambda, X_\cdot) \, \mathsf{m}_\cdot (d \lambda) \Big)_s \, ds, 
		\\ 
		\Big( \int_0^{\cdot \wedge \tau_D} \int a (\lambda, X_s) \, \mathsf{m}_s (d \lambda) \,ds \Big)^p = \int_0^{\cdot \wedge \tau_D} \hspace{-0.2cm} {\phantom{\Big(}}^p\Big( \int a (\lambda, X_\cdot) \, \mathsf{m}_\cdot (d \lambda) \Big)_s \, ds, 
	\end{align*} 
	where \((\, \cdot \, )^p\) denotes the dual predictable projection and \(^p (\, \cdot \,)\) denotes the predictable compensator (\cite[Theorem~1.23]{jacod79}), both for the filtration \(X^{-1} ((\cF_t)_{t \geq 0})\). We deduce from \eqref{eq: b indetity imp}, and the definition of the predictable compensator, that 
	\begin{align*}
		E^{P_x} \Big[ \int_0^{\tau_D} & \1 \Big\{  \hspace{-0.15cm} {\phantom{\Big(}}^p\Big( \int b (\lambda, X_\cdot) \,  \mathsf{m}_\cdot (d \lambda) \Big)_s \not = b^* (X_s) \Big\} \, ds\, \Big] 
		\\&= \int_0^\infty P_x \Big( \hspace{-0.15cm} {\phantom{\Big(}}^p\Big( \int b (\lambda, X_\cdot) \,  \mathsf{m}_\cdot (d \lambda) \Big)_s \not = b^* (X_s), s < \tau_D \Big) \, ds
		\\&= \int_0^\infty P_x \Big( E^{P_x} \Big[ \int b (\lambda, X_s) \, \mathsf{m}_s (d \lambda) \mid X^{-1} (\cF_s) \Big] \not = b^* (X_s), s < \tau_D \Big) \, ds
		= 0. \phantom \int 
	\end{align*}
	Here, we also use that \(X^{-1} (\cF_{s-}) = X^{-1} (\cF_s)\), which is due to the continuous paths of \(X\), cf. \cite[Problem~2.7.1]{KaraShre}.
	This implies that \(P_x\)-a.s. 
	\[
	\Big( \int_0^{\cdot \wedge \tau_D} \hspace{-0.15cm}\int b (\lambda, X_s) \, \mathsf{m}_s (d \lambda) \, ds \Big)^p = \int_0^{\cdot \wedge \tau_D} b^* (X_s) \, ds.
	\] 
	Similarly, we get \(P_x\)-a.s. 
	\[
		\Big( \int_0^{\cdot \wedge \tau_D} \hspace{-0.15cm}\int a (\lambda, X_s) \, \mathsf{m}_s (d \lambda) \, ds \Big)^p = \int_0^{\cdot \wedge \tau_D} a^* (X_s) \, ds.
	\] 
	Consequently, from \cite[Theorem~II.2.42]{JS} and \cite[Lemma~2.9]{jacod80}, we get that the stopped processes
	\[
	f (X_{\cdot \wedge \tau_D}) - \int_0^{\cdot \wedge \tau_D} \Big( \tfrac{1}{2} \on{tr} \, \big[ a^* (X_s) \nabla^2 f (X_s) \big] + \langle b^* (X_s), \nabla f (X_s) \rangle \Big) \, ds, \quad f \in C^2_b (\bR^d),  
	\] 
	are \(P_x \circ X^{-1}\)-\((\cF_t)_{t \geq 0}\)-martingales. 
	Now, by \cite[Proposition~5.4.6]{KaraShre}, possibly on an extension of the probability space \((\Omega, \cF, (\cF_t)_{t \geq 0}, P_x \circ X^{-1})\), there exists a Brownian motion \((W_t)_{t \geq 0}\) such that a.s. 
	\begin{align} \label{eq: SDE reprentation}
	d X_{t \wedge \tau_D} = b^* (X_t) \1_{\{t \leq \tau_D\}} \, dt + \sqrt{a^* (X_t)}\1_{\{t \leq \tau_D\}} \, d W_t, \quad X_0 = x. 
	\end{align} 
	
	\noindent
	{\em Step 2:}
	Given \eqref{eq: SDE reprentation}, we know that \(X_{\cdot \wedge \tau_D}\) is a strong Markov process with SDE dynamics whose drift and diffusion coefficients satisfy \(\|b^*\| \leq \b\) and \(a^* \in \mathbb{S}_\delta\). All this brings us close to the setting of \cite{krylov_PTRF_21}. 
	Indeed, we would like to use \cite[Theorem~6.8]{krylov_PTRF_21}, which provides our statement. However, we cannot use it without a discussion, because \cite{krylov_PTRF_21} works with a {\em global} strong Markov process (instead of a stopped one). A close inspection of the proof of \cite[Theorem~6.8]{krylov_PTRF_21} shows that the proof only relies on results from \cite{krylov_AOP_21}, which do not require the strong Markov property, or {\em local} arguments that work with the strong Markov property of the stopped process \(X_{\cdot \wedge \tau_D}\). Given these observations, we can use \cite[Theorem~6.8]{krylov_PTRF_21} in our setting and conclude the claim. For brevity we will not repeat the proof of \cite[Theorem~6.8]{krylov_PTRF_21} and leave the details to the reader.
\end{proof}

In case \(f \colon \partial D \to \bR\) is a continuous map on the compact set \(\partial D\), there exists an increasing concave modulus of continuity, i.e., an increasing concave function \(w \colon \bR_+ \to \bR_+\) with \(w (0) = 0\) and \(| f (x) - f (y) | \leq w (\|x - y\|)\) for all \(x, y \in \partial D\), see \cite[Proposition~3.15]{DN_11}. 
As \(D\) satisfies the exterior ball condition, for every \(x \in \partial D\), there exists a radius \(\rho (x) > 0\) and a center \(z \in D^c\) such that \(\{ y \in \bR^d \colon \| y - z \| \leq \rho (x)\} \cap \oD = \{x\}\). 

\begin{theorem} \label{theo: krylov boundary}
	Assume that \(f\) is continuous with increasing concave modulus of continuity \(w\).
	For \(x_0 \in \partial D\)
	there exist two constants \[
	C = C(d, \delta, \|\b\|_{L_d (\bR^d)}, \on{diam}\, (D), \|g^*\|_{\infty}, \rho (x_0)) > 0, \quad \beta = \beta (d, \delta, \|\b\|_{L_d (\bR^d)}) \in (0, 1),\] such that 
	\[
	|u (x) - u (x_0)| \leq C  \| x - x_0 \|^\beta + w \big( C  \|x - x_0\|^{\beta/2} \big)
	\] 
	for all \(x \in D\). 
\end{theorem}

\begin{proof} 
We adapt the idea behind \cite[Corollary~4.12]{krylov_AOP_21}.
Throughout the proof, take \(x \in D\). 
Further, \(C\) and \(\beta\) are constants as in the assertion of the theorem. We use the convention that the constants might change from line to line. 
Using that \(w\) is concave and increasing, we get from Jensen's inequality that 
\begin{align*}
| u (x) - u (x_0) | &= \Big| E^{P_x} \Big[ f (X_{\tau_D}) - f (x_0) + \int_0^{\tau_D} g^* (X_s) \, ds \Big] \Big|
\\&\leq E^{P_x} \big[ | f (X_{\tau_D}) - f (x_0)| \big] + E^{P_x} \Big[ \int_0^{\tau_D} |g^* (X_s) | \, ds \Big] 
\\&\leq E^{P_x} \big[ w ( \|X_{\tau_D} - x_0\| ) \big] + E^{P_x} \Big[ \int_0^{\tau_D} |g^* (X_s) | \, ds \Big] 
\\&\leq w \Big( \sqrt{ E^{P_x} \big[ \|X_{\tau_D} - x_0\|^2 \big] }\, \Big) + E^{P_x} \Big[ \int_0^{\tau_D} |g^* (X_s) | \, ds \Big].
\end{align*} 
It is easy to see that, for all \(r \in (0, \rho (x_0))\), \(\mu_L (B_r (z_0) \cap D^c) \geq \mu_L (B_{r/2} (z_0)) = 2^{-d}\mu_L (B_r (z_0))\), where \(\mu_L\) denotes the Lebesgue measure. By this fact, \cite[Theorem~4.11]{krylov_AOP_21} is applicable and we deduce that 
\[
E^{P_x} \Big[ \int_0^{\tau_D} |g^* (X_s) | \, ds \Big] \leq C \|x - x_0\|^\beta.
\] 
By It\^o's formula, we obtain that 
\begin{align*}
E^{P_x} \big[ &\| X_{\tau_D} - x_0 \|^2 \big] 
\\&= \|x - x_0\|^2 + E^{P_x} \Big[ \int_0^{\tau_D} \int \big(2 \langle X_s, b (\lambda, X_s)\rangle + \on{tr} \big[ a (\lambda, X_s) \big] \big) \, M (d \lambda, ds) \Big]
\\&\leq \|x - x_0\|^2 + C E^{P_x} \Big[ \int_0^{\tau_D} \big(\b (X_s) +  1 \big) \, ds \Big].
\end{align*} 
Using again \cite[Theorem~4.11]{krylov_AOP_21}, we conclude that 
\begin{align*}
E^{P_x} \big[ \| X_{\tau_D} - x_0 \|^2 \big] &\leq \|x - x_0\|^2 + C \|x - x_0\|^\beta 
\leq C \| x - x_0\|^\beta.
\end{align*} 
Putting the pieces together, Theorem~\ref{theo: krylov boundary} is proved. 
\end{proof} 

We are now in the position to prove Theorem~\ref{theo: main Holder continuity}. 

\begin{proof}[Proof of Theorem~\ref{theo: main Holder continuity}]
	Recall that 
	\[
	v (x) = E^{P_x} \Big[ f (X_{\tau_D}) + \int_0^{\tau_D} \int g (\lambda, X_t) \, M (dt, d \lambda) \, \Big],
	\] 
	where \((P_x)_{x \in \oD}\) is as in Theorem~\ref{theo: SMS}. The first step is to show that \(v\) has the same form as the function \(u\) from \eqref{eq: u in proof}. Then, we deduce the claimed continuity properties from the Theorems~\ref{theo: krylov interior} and \ref{theo: krylov boundary}.
	
	\smallskip
	
	\noindent
	{\em Step 1:} 
		The following uses some ideas from~\cite{HausLep90}. 
	Define 
	\begin{align*}
	\Gamma_t &:= v (X_t) + \int_0^t \int g (\lambda, X_s) \, M (ds, d \lambda), \quad t \leq \tau_D, 
	\\
	\Gamma^f &:= f (X_{\tau_D}) + \int_0^{\tau_D} \int g (\lambda, X_t) \, M (dt, d \lambda).
	\end{align*}
	We now prove that \(\Gamma_{\cdot \wedge \tau_D}\) is a \(P_x\)-\((\cG_t)_{t \geq 0}\)-martingale for every \(x \in \oD\). At this point, we use the shift operator \(\theta\) from \eqref{eq: shift}.
	The claim is trivial for \(x \in \partial D\), so take \(x \in D\) and a finite \((\cG_t)_{t \geq 0}\)-stopping time \(\xi \leq \tau_D\). 
	Using Lemma~\ref{lem: K initial}, we observe that \(P_x\)-a.s. \(P_x (\theta_{\xi}^{-1} (\, \cdot \,) \mid \cG_{\xi}) \in \cK^* (X_{\xi})\) and hence, \(P_x\)-a.s.
	\[
	v (X_\xi) = E^{P_x (\theta_{\xi}^{-1} ( \, \cdot \,) \, \mid\, \cG_\xi)} \big[ \Gamma^f \big] =  E^{P_x} \big[ \Gamma^{f} \circ \theta_\xi \mid \cG_\xi \big]. 
	\]
	Notice that \(\tau_D \circ \theta_\xi = \tau_D - \xi\) and 
	\begin{equation*} \begin{split}
			\Gamma^f\circ \theta_\xi 
			&= f (X_{\tau_D \, \circ\, \theta_\xi \, +\, \xi}) + \int_{0}^{\tau_D\, \circ\, \theta_\xi} \int g (\lambda, X_{s + \xi}) \, \mathsf{m}_{s + \xi} (d \lambda) \, ds
			\\&= f (X_{\tau_D}) + \int_{\xi}^{\tau_D} \int g (\lambda, X_s) \, \mathsf{m}_s (d \lambda) \, ds.
		\end{split}
	\end{equation*} 
	Thus, we get that 
	\begin{align*}
		E^{P_x} \big[ \Gamma_\xi \big] &= E^{P_x} \Big[ E^{P_{x}} \big[ \Gamma^f \circ \theta_\xi \mid \cG_\xi \big] + \int_{0}^\xi \int  g (\lambda, X_s) \, \mathsf{m}_s (d \lambda) \,ds \Big] 
		\\&= E^{P_x} \big[ E^{P_x} \big[ \Gamma^f \mid \cG_\xi \big] \big]
		= E^{P_x} \big[ \Gamma^f \big] 
		= v (x). \phantom \int 
	\end{align*} 
	This proves that \(\Gamma_{\cdot \wedge \tau_D}\) is a \(P_x\)-\((\cG_t)_{t \geq 0}\)-martingale.
	For \(y \in D\), we set 
	\begin{align*}
		g^* (y) := \begin{cases} \liminf_{n \to \infty} n \big( v (y) - E^{P_y} \big[ v (X_{\tau_D \wedge 1/n}) \big] \big), & \text{if the \(\liminf\) is in the}
			\\&\text{ball \(\{z \colon \|z\| \leq \|g\|_\infty \}\)},
			\\ 0, &\text{otherwise},\end{cases} 
	\end{align*}
	Clearly, this \(g^*\) is a bounded Borel function.
	Using the Markov property of \((P_z \circ X_{\cdot \wedge \tau_D})_{z \in \oD}\), the \(P_x\)-\((\cG_t)_{t \geq 0}\)-martingale property of \(\Gamma_{\cdot \wedge \tau_D}\), and Fubini's theorem (which we may apply because \(g\) is bounded), we obtain, \(P_x\)-a.s. for all \(t < \tau_D\),
	\begin{align*}
		n \big( v (X_t) -   E^{P_{X_t}} \big[ v (X_{\tau_D \wedge 1/n})\big] \big) 
		&= n \big( v (X_t) - E^{P_x} \big[ v (X_{\tau_D \wedge (t + 1/n)}) \mid X^{-1} (\cF_t) \big] \big) \phantom \int
		\\&= n \Big( v (X_t) - E^{P_x} \big[ \Gamma_{\tau_D \wedge (t + 1/n)} \mid X^{-1} (\cF_t) \big]
		\\&\hspace{1cm}+  E^{P_x} \Big[ \int_0^{\tau_D \wedge (t + 1/n)}\int g (\lambda, X_s) \, \mathsf{m}_s (d \lambda) \, ds \mid X^{-1} (\cF_t) \Big] \Big)
		\\&= n \Big( v (X_t) - E^{P_x} \big[ E^{P_x} \big[ \Gamma_{\tau_D \wedge (t + 1/n)} \mid \cG_t \big] \mid X^{-1} (\cF_t) \big] \phantom \int 
		\\&\hspace{1cm}+ E^{P_x} \Big[ \int_0^{\tau_D \wedge (t + 1/n)} \int g (\lambda, X_s) \, \mathsf{m}_s (d \lambda) \, ds \mid X^{-1} (\cF_t) \Big] \Big)
		\\&= n \Big( v (X_t) - E^{P_x} \big[ \Gamma_{t} \mid X^{-1} (\cF_t) \big] \phantom \int 
		\\&\hspace{1cm}+ E^{P_x} \Big[ \int_0^{\tau_D \wedge (t + 1/n)} \int g (\lambda, X_s) \,  \mathsf{m}_s (d \lambda) \, ds \mid X^{-1} (\cF_t) \Big] \Big)
		\\&=  E^{P_x} \Big[ \, n \int_t^{\tau_D \wedge (t + 1/n)} \int g (\lambda, X_s) \, \mathsf{m}_s (d \lambda) \, ds \mid X^{-1} (\cF_t) \Big]
		\\&= n \int_t^{t + 1/n} E^{P_x} \Big[ \int g (\lambda, X_s) \, \mathsf{m}_s (d \lambda) \1_{\{s \leq \tau_D\}} \mid X^{-1} (\cF_t) \Big] \, ds
		\\&\xrightarrow{\quad} E^{P_x} \Big[ \int g (\lambda, X_t) \, \mathsf{m}_t (d \lambda)\mid X^{-1} (\cF_t) \Big], \quad n \to \infty.
	\end{align*}
	We conclude that, \(P_x\)-a.s. for Lebesgue a.e. \(t < \tau_D\), 
	\[
	g^* (X_t) = E^{P_x} \Big[ \int g (\lambda, X_t) \, \mathsf{m}_t (d \lambda) \mid X^{-1} (\cF_t) \Big].
	\] 
	As in Step~1 of the proof for Theorem~\ref{theo: krylov interior}, we obtain that \(P_x\)-a.s. 
	\begin{align*}
		\Big( \int_0^{\cdot \wedge \tau_D} \int g (\lambda, X_s) \, \mathsf{m}_s (d \lambda) \, ds \Big)^p = \int_0^{\cdot \wedge \tau_D} g^* (X_s) \, ds, 
	\end{align*}
	and, using \cite[Theorem~1.38]{jacod79}, 
	\begin{align*}
		E^{P_x} \Big[ \int_0^{\tau_D}\int g (\lambda, X_s) \, \mathsf{m}_s (d \lambda) \, ds \Big] 
		&= E^{P_x} \Big[ \int_0^{\tau_D} g^* (X_s) \, ds \Big].
	\end{align*}
	{\em Step 2:} Summarizing Step 1, we have 
	\[
	v (x) = E^{P_x} \Big[ f (X_{\tau_D}) + \int_0^{\tau_D} g^* (X_s) \, ds \Big]
	\]
	for a bounded Borel function \(g^*\). In other words, \(v\) has the form \eqref{eq: u in proof}. Consequently, Theorem~\ref{theo: krylov interior} yields that \(v\) is locally \(\alpha\)-H\"older continuous in the interior \(D\) with exponent \(\alpha = \alpha (d, \delta, \|\b\|_{L_d(\bR^d)}) \in (0, 1]\). 
	If \(f\) is even continuous (and not only lower semicontinuous), \(v\) is continuous on \(\oD\) by the Theorems~\ref{theo: krylov interior} and \ref{theo: krylov boundary}, which completes the proof of (a). 
	
	\smallskip
	{\em Step 3:} Finally, we assume that \(D\) satisfies the uniform exterior ball condition and that \(f\) is continuous with concave, increasing modulus of continuity \(w_f\). 
	We adapt an argument from \cite[Theorem 9.7.1]{krylov_18}.
	Take \(x_i \in \oD\), set \(\rho := \| x_1 - x_2\|\) and \(\rho_i := \on{dist} (x_i, \partial D)\). We assume that \(\rho < 1/4\). 
	Let \(y_i\) be the closest point to \(x_i\) on \(\partial D\). 
	We now distinguish two case: 
	\begin{enumerate}
		\item[(i)] \(\rho \geq \rho^2_1 \wedge \rho^2_2\); 
		\item[(ii)] \(\rho < \rho^2_1 \wedge \rho^2_2\).
	\end{enumerate}  
For (i), notice that \(| \rho_1 - \rho_2 | \leq \| x_1 - x_2 \| = \rho < \sqrt{\rho}\) and \(\rho_i \leq \rho_1 \wedge \rho_2 + | \rho_1 - \rho_2 | < 2 \sqrt{\rho}\). 
Using Theorem~\ref{theo: krylov boundary}, we obtain that
\begin{align*}
	| v (x_1) - v (x_2) | &\leq | v (x_1) - v (y_1) | + | v (y_1) - v (y_2) | + | v (y_2) - v (x_2) | \phantom \int
	\\&= | v (x_1) - v (y_1) | + | f (y_1) - f (y_2) | + | v (y_2) - v (x_2) | \phantom \int 
	\\&\leq \sum_{i = 1}^2 \, ( C \| x_i - y_i \|^\alpha  + w_f ( C \|x_i - y_i\|^{\alpha/2} ) ) + w_f ( \| y_1 - y_2 \| ) 
	\\&\leq  \sum_{i = 1}^2 \, ( C \rho_i^\alpha + w_f (C \rho_i^{\alpha / 2}) ) +  w_f ( \rho_1 + \rho_2 + \rho) 
	\\&\leq  C \rho^{\alpha/2} + w_f (C \rho^{\alpha / 4} )  + w_f ( 5 \rho^{1/2} ), \phantom \int 
\end{align*}
where \(C\) only depends on \(d, \delta, \|\b\|_{L_d (\bR^d)}, \on{diam} (D), \| g \|_\infty\), and the radius from the uniform exterior ball condition. 

\noindent
For (ii), set  \(R = \sqrt{\rho} / 2\) and \(x_0 = x_1\). Then, \(B_{2R} (x_0) \subset \subset D\), since \(2 R < \rho_1\), and 
\[
\| x_1 - x_2 \| = \rho <\frac{\sqrt{\rho}}{2} = R, 
\] 
since \(\rho <1 / 4\). Hence, we may apply  Theorem~\ref{theo: krylov interior} and obtain that 
\begin{align*}
	| v (x_1) - v (x_2) | &\leq C R^{- \alpha} \rho^{\alpha} \Big( \sup_{\oD} | v | + \|g\|_\infty \Big)
	= C \rho^{\alpha / 2} \Big( \sup_{\oD} | v | + \| g \|_\infty \Big),  
\end{align*}
where the constant \(C\) depends on \(d, \delta, \|\b\|_{L_d(\bR^d)}\) and \(\on{diam}(D)\). This proves the claim from (b). The claim of (c) follows by taking \(w_f (x) = C x^\beta\). 
	\end{proof}
	
	We now make a step into the direction of the proof for Theorem~\ref{theo: main continuity}.
	As usual, the key ingredient for the viscosity property is the dynamic programming principle, which we provide for our setting in the following lemma.
	
	\begin{lemma}[Dynamic programming principle] \label{lem: DPP}
		For all stopping times \(\rho \leq \tau_D\), 
		\begin{align} \label{eq: DPP}
			v (x) = \inf_{P \in \cK (x)} E^P \Big[ v (X_\rho) + \int_0^\rho \int g (\lambda, X_s) \, M (ds, d \lambda) \Big], \quad x \in \oD.
		\end{align}
	\end{lemma} 
	\begin{proof}
		Recall that \(\theta\) denotes the shift from \eqref{eq: shift} and that \(\gamma\) denotes the right-inverse shift from~\eqref{eq: inverse shift}.
		
		First, by the stability under conditioning property (see Definition~\ref{def: stability}~(ii)), for every \(P \in \cK (x)\), we have \(P\)-a.s. \(P ( \theta^{-1}_\rho (\, \cdot \,) \mid \cG_\rho) \in \cK (X_\rho)\). Hence, 
		\begin{align*}
			 \inf_{P \in \cK (x)} E^P \Big[ &v (X_\rho) + \int_0^\rho \int g (\lambda, X_s) \, M (ds, d \lambda) \Big] 
			 \\&\leq \inf_{P \in \cK(x)} E^P \Big[ E^P \Big[ f (X_{\tau_D}) + \int_\rho^{\tau_D} \int g (\lambda, X_s) \, M (ds, d\lambda) \mid \cG_\rho \Big] 
			 \\&\hspace{4.05cm}+ \int_0^\rho \int g (\lambda, X_s) \, M(ds, d\lambda) \, \Big] 
			 \\&= v (x).
		\end{align*}
		For the converse inequality, by Lemma~\ref{lem: K initial}~(i) and a measurable selection theorem (see \cite[Theorem~18.13]{charalambos2013infinite} or \cite[Theorem~12.1.10]{SV}), there exists a measurable map \(y \mapsto P_y\) with \(P_y \in \cK (y)\) and
		\[
		E^{P_y} \big[ \Gamma^f \big] = v (y), \quad y \in \oD.
		\] 
		Setting
		\[
		Q_{\overline{\omega}} := P_{X_{\rho} (\overline{\omega})} \circ \gamma^{-1}_{\rho (\overline{\omega})}, \qquad \overline{\omega} \in \Omega \times \m,
		\]
		we deduce from Lemmata~\ref{lem: stability} and~\ref{lem: shift} that, for every \(P \in \cK(x)\), \(P \otimes_{\rho} Q \in \cR (x)\), where the pasting notation is taken from \eqref{eq: pasting measure}.
		Using that \(\theta \circ \gamma = \on{id}\), 
		we obtain that 
		\begin{align*}
			\inf_{P \in \cR (x)} &E^P \Big[ v (X_{\rho}) + \int_0^{\rho} \int g (\lambda, X_s) \, M (ds, d\lambda) \Big]
			\\&\quad = \inf_{P \in \cR (x)} E^P \Big[ E^{P_{X_{\rho}}} \big[ \Gamma^f\big]  + \int_0^\rho \int g (\lambda, X_s) \, M (ds, d \lambda) \Big]
			\\&\quad = \inf_{P \in \cR (x)} E^P \Big[ E^{Q} \big[ \Gamma^f \circ \theta_{\rho (\, \cdot \,)} \big ] + \int_0^\rho \int g (\lambda, X_s) \, M (ds, d \lambda) \Big]
			\\&\quad = \inf_{P \in \cR (x)} E^P \Big[ E^Q \Big[ f (X_{\tau_D}) + \int_{\rho (\, \cdot\,)}^{\tau_D \, \circ\, \theta_{\rho (\, \cdot\,)} + \rho (\, \cdot \,)} \int g (\lambda, X_s) \, M (ds, d \lambda) \Big] \\&\hspace{4.4cm}+ \int_0^\rho \int g (\lambda, X_s) \, M (ds, d \lambda) \Big]
			\\&\quad= \inf_{P \in \cR (x)} E^{P \otimes_{\rho} Q} \big[ \Gamma^f \big] \geq v (x). \phantom \int
		\end{align*}
		The proof is complete.
	\end{proof}

\begin{proof}[Proof of Theorem~\ref{theo: main continuity}]
As \(f\) is assumed to be continuous, we showed in the proof for Theorem~\ref{theo: main Holder continuity} that \(v\) is continuous. It remains to prove that \(v\) satisfies the viscosity properties. By \cite[Theorem~1.3]{krylov_AOP_21}, there exists a constant \(d_0 = d_0 (d, \delta, \|\b\|_{L_d (\bR^d)}) \in (d/2, d)\) such that an It\^o formula holds for all functions from \(W^{2}_{p, \textup{loc}} (D)\) with \(p \geq d_0\). From now on, take \(p \geq d_0\).

\smallskip
{\em Step 1:} We start with the \(L_p\)-subsolution property. Take \(x_0 \in D\), let \(x_0 \in U \subset\subset D\) and \(\varphi \in W^{2}_{p, \textup{loc}} (D)\) such that \(v (x_0) = \varphi (x_0)\) and \(v <\varphi\) on \(\partial U\). 
By part (b) of our Standing Assumption~\ref{SA: main1},
\[
(\lambda, x) \mapsto L (\lambda, x, \nabla \varphi (x), \nabla^2 \varphi (x)) + g (\lambda, x)
\] 
is continuous in the first and measurable in the second variable. Hence, as \(\Lambda\) is compact, the measurable maximum theorem \cite[Theorem~18.19]{charalambos2013infinite} yields the existence of a measurable map \(x \mapsto \lambda (x)\), from \(D\) into \(\Lambda\), such that 
\[ 
L (\lambda (x), x, \nabla \varphi (x), \nabla^2 \varphi (x)) + g (\lambda (x), x) = H (x, \nabla \varphi (x), \nabla^2 \varphi (x)), \quad x \in D. 
\] 
By \cite[Theorem~1.1]{krylov_PTRF_21}, there exists an \(\bR^d \)-valued continuous It\^o process \((Y_t)_{t \geq 0}\) and a \(d\)-dimensional Brownian motion \((W_t)_{t \geq 0}\) such that \(Y_0 = x_0\) and 
\begin{align*}
d Y_t = ( b (\lambda (Y_t), Y_t) \1_{\{Y_t \in D\}} &+ b (\lambda_0, Y_t) \1_{\{Y_t \not \in D\}}) \, dt 
\\&+ ( \sqrt{a (\lambda (Y_t), Y_t)} \1_{\{Y_t \in D\}} + \sqrt{a (\lambda_0, Y_t)} \1_{\{Y_t \not \in D\}} ) \, d W_t,
\end{align*} 
where \(\lambda_0 \in \Lambda\) is fixed but arbitrary.
Let \(P'\) be the law of \[(Y, \delta_{\lambda(Y_{t}) \1_{\{Y_t \in D\}} + \lambda_0 \1_{\{Y_t \not \in D\}}} (d \lambda) \, dt),\] seen as a probability measure on the canonical product space \((\Omega \times \m, \cF\otimes \mathcal{M})\). Clearly, we have \(P' \in \cK(x_0)\).
By the dynamic programming principle (Lemma~\ref{lem: DPP}), the It\^o formula for Sobolev functions \cite[Theorem~1.3]{krylov_AOP_21}, and \cite[Theorem~2.8]{krylov_AOP_21}, 
\begin{align*}
	0 &= \inf_{Q \in \cK (x_0)} E^Q \Big[ v (X_{\tau_U}) + \int_0^{\tau_U} \int g (\lambda, X_s) \, M (ds, d \lambda) \Big] - v (x_0)
	\\&< E^{P'} \Big[ \varphi (X_{\tau_U}) - \varphi (x_0) + \int_0^{\tau_U} \int g (\lambda, X_s) \, M (ds, d \lambda) \Big]
	\\&= E^{P'} \Big[ \int_0^{\tau_U} \big( L (\lambda (X_s), X_s, \nabla \varphi (X_s), \nabla^2 \varphi (X_s)) + g (\lambda (X_s), X_s) \big) \, ds \Big] 
	\\&= E^{P'} \Big[ \int_0^{\tau_U} H (X_s, \nabla \varphi (X_s), \nabla^2 \varphi (X_s)) \, ds \Big]
	\\&\leq C\, \| H (\, \cdot \,, \nabla \varphi, \nabla^2 \varphi)_+ \|_{L_{d_0} (U)}, \quad y_+ := \max \{y, 0\}, \phantom \int 
	\\&\leq C\, \underset{U}{\on{ess \, sup}} \, H (\, \cdot \, , \nabla \varphi, \nabla^2 \varphi)_+, \phantom \int 
\end{align*}
where the constant \(C > 0\) depends on \(d, \delta, \|\b\|_{L^d (\bR^d)}\) and \(\on{diam}\, (D)\), and it changed from the second to last line.
As a consequence, 
\[
\underset{U}{\on{ess \, sup}} \, H (\, \cdot \, , \nabla \varphi, \nabla^2 \varphi) > 0. 
\] 
This proves that \(v\) is an \(L_d\)-subsolution (see Remark~\ref{rem: strict maximum}).

\smallskip
{\em Step 2:} Next, we prove that \(v\) is also an \(L_p\)-supersolution. 
Take \(x_0 \in D\), let \(U := B_R (x_0) \subset\subset D\) and \(\varphi \in W^{2}_{p, \textup{loc}} (D)\) such that \(v (x_0) = \varphi (x_0)\) and \(v > \varphi\) on \(\partial U\). 
Recall from Lemma~\ref{lem: upper hemi} that \(\cK(x_0)\) is compact. Further, using Lemma~\ref{lem: lower expectation} with \(U\) instead of \(D\) and \(v\) instead of \(f\), we obtain that 
\[
P \mapsto E^P \Big[ v (X_{\tau_U}) + \int_0^{\tau_U} \int g (\lambda, X_s) \, M(ds, d \lambda) \Big] 
\] 
is lower semicontinuous. Here, we use that \(v\) is continuous on \(\overline{U}\) by Theorem~\ref{theo: main continuity}. As real-valued lower semicontinuous functions attain their infima on compact sets, there exists a measure \(P' \in \cK(x_0)\) such that 
\begin{equation*} \begin{split}
\inf_{Q \in \cK (x_0)} E^Q \Big[ &v (X_{\tau_U}) + \int_0^{\tau_U} \int g (\lambda, X_s) \, M (ds, d \lambda) \Big] 
\\&=
E^{P'} \Big[ v(X_{\tau_U}) + \int_0^{\tau_U} \int g (\lambda, X_s) \, M (ds, d \lambda) \Big].
\end{split}
\end{equation*} 
Now, as in Step 1, we obtain 
\begin{align*}
	0 &= v (x_0) - \inf_{Q \in \cK (x_0)} E^Q \Big[ v (X_{\tau_U}) + \int_0^{\tau_U} \int g (\lambda, X_s) \, M (ds, d \lambda) \Big]
	\\&= \varphi (x_0) - E^{P} \Big[ v (X_{\tau_U}) + \int_0^{\tau_U} \int g (\lambda, X_s) \, M (ds, d \lambda) \Big]
	\\&< - E^{P} \Big[ \int_0^{\tau_U} \int \big( L (\lambda , X_s, \nabla \varphi (X_s), \nabla^2 \varphi (X_s)) + g (\lambda, X_s) \big) \, M (ds, d \lambda) \Big] 
	\\&\leq - E^{P} \Big[ \int_0^{\tau_U} H (X_s, \nabla \varphi (X_s), \nabla^2 \varphi (X_s)) \, ds \Big]
	\\&\leq E^{P} \Big[ \int_0^{\tau_U} H (X_s, \nabla \varphi (X_s), \nabla^2 \varphi (X_s))_- \, ds \Big], \quad y_- := \max \{- y, 0\}, 
	\\&\leq C\, \| H (\, \cdot \, , \nabla \varphi, \nabla^2 \varphi )_- \|_{L_{d_0} (U)} \phantom \int 
	\\ &\leq C\, \underset{U}{\on{ess \, sup}} \, H (\, \cdot \, , \nabla \varphi, \nabla^2 \varphi)_-, \phantom \int 
\end{align*}
where the constant has changed in the second to last line. 
As a consequence, 
\[
\underset{U}{\on{ess \, sup}} \, ( - H (\, \cdot \, , \nabla \varphi, \nabla^2 \varphi)) > 0, 
\] 
and hence, 
\[
\underset{U}{\on{ess \, inf}} \, H (\, \cdot \, , \nabla \varphi, \nabla^2 \varphi) = -\, \underset{U}{\on{ess \, sup}} \, ( - H (\, \cdot \, , \nabla \varphi, \nabla^2 \varphi)) < 0. 
\] 
This shows that \(v\) is an \(L_p\)-supersolution (see Remark~\ref{rem: strict maximum}). The proof is complete. 
\end{proof} 

\section{Proofs of the Theorems~\ref{theo: semigroup}, \ref{theo: strong Feller} and \ref{theo: joint regu semigroup}} \label{sec: pf main semigroup}
\subsection{Proofs of the Theorems~\ref{theo: semigroup} and \ref{theo: strong Feller}}
In the following we prove the Theorems~\ref{theo: semigroup} and \ref{theo: strong Feller}, leaving out some details that are similar to the proof of Theorem~\ref{theo: main continuity}. 

The semigroup property from Theorem~\ref{theo: semigroup} is a consequence of the dynamic programming principle for \(u (t, x) := S_t (f) (x)\), i.e., for all stopping times \(\rho \leq t\),  
\begin{align} \label{eq: para DPP}
u(t, x) = \inf_{P \in \cR(x)} E^P \Big[ u (t - \rho, X_{\rho}) + \int_0^\rho \int g (\lambda, X_s) \, M (ds, d \lambda) \Big]. 
\end{align} 
This identity can be proved similar to Lemma~\ref{lem: DPP} and we omit the proof for brevity. 
As a consequence, we obtain that 
\begin{align*}
	S_{t + s} (f) (x) =  \inf_{P \in \cR(x)} E^P \Big[ S_t (f) (X_{s}) + \int_0^s \int g (\lambda, X_v) \, M (dv,d \lambda) \Big] = S_s (S_t (f)) (x). 
\end{align*}
To conclude Theorem~\ref{theo: semigroup}, it remains to establish the \(C_b\)-Feller property, i.e., \(S_t (f) \in C_b (\bR^d)\) for all \(f \in C_b (\bR^d)\) and \(t \geq 0\). Of course, \(S_0 = \on{id}\), so that we only have to consider \(t > 0\). 
For these cases the property follows similar to the regularity of the value function \eqref{eq: value function D} as proved in Theorem~\ref{theo: main continuity}. 
The key tools are again a strong Markov selection and analytic estimates.

To give the idea, take a deterministic time \(\xi > 0\) and a bounded lower semicontinuous function \(f \colon \bR^d \to \bR\).
In the spirit of Theorem~\ref{theo: SMS}, there exists a time-inhomogeneous family \((P_{t, x})_{(t, x) \in \bR_+ \times \bR^d}\) such that \(P_{t, x} \in \cK (t, x)\),  \((P_{t, x} \circ X^{-1})_{(t, x) \in \bR_+ \times \bR^d}\) is strongly Markov, and
\begin{equation} \label{eq: SMSP parabolic} \begin{split}
v (t, x) := \inf_{P \in \cK (t, x)} E^P \Big[ f & (X_\xi) + \int_t^\xi \int g (\lambda, X_s) \, M(ds, d\lambda) \Big] 
\\&= E^{P_{t, x}} \Big[ f (X_\xi) + \int_t^\xi \int g (\lambda, X_s) \, M(ds, d\lambda) \Big].
\end{split}
\end{equation} 
This strong Markov selection principle can be proved similar to Theorem~\ref{theo: SMS}. In fact, the proof is slightly less technical, as the cost function does not depend on the stopping time \(\tau_D\). A result in the same spirit is given in \cite[Theorem~7.4]{nicole1987compactification} for a bounded drift coefficient \(b\). We omit a detailed proof for brevity. 
Given \eqref{eq: SMSP parabolic}, we may conclude \(S_\xi (f) \in C_b (\bR^d)\) once we prove that
\[
x \mapsto E^{P_{0, x}} \Big[ f (X_\xi) + \int_0^\xi \int g (\lambda, X_s) \, M(ds, d\lambda) \Big]
\]
is continuous. As in the proof of Theorem~\ref{theo: krylov interior}, one can show that there are Borel functions \(b^* \colon \bR_+ \times \bR^d \to \bR^d\) and \(a^* \colon \bR_+ \times \bR^d \to \mathbb{S}_\delta\) with \(\| b^* (t, x) \| \leq \b (x)\) and, under \(P_{t, x}\), 
\[
X_{t + s} = x + \int_0^s b^* (t + r, X_{t + r}) \, dr + \int_0^s \sqrt{a^* (t + r, X_{t + r})} \, d W_r,
\] 
for a \(d\)-dimensional Brownian motion \((W_t)_{t \geq 0}\). Moreover, as in the proof of Theorem~\ref{theo: main continuity}, there exists a bounded Borel function \(g^* \colon [0, \xi] \times \bR^d \to \bR\) such that 
\begin{align} \label{eq: g^' time}
E^{P_{t, x}} \Big[ f (X_\xi) + \int_t^\xi \int g (\lambda, X_s) \, M(ds, d\lambda) \Big] = E^{P_{t, x}} \Big[ f (X_\xi) + \int_t^\xi g^* (s, X_s) \, ds \Big].
\end{align} 
Let us sketch the idea of this step in some detail. For each \((t, x) \in [0, \xi) \times \bR^d\), the process
\[
\Gamma_s := v (s, X_s) + \int_t^s \int g (\lambda, X_v) \, M (dv, d \lambda), \quad s \in [t, \xi], 
\] 
is a \(P_{t, x}\)-martingale. To see this, take a stopping time \(\rho\) with values in \([t, \xi]\), notice that, by an analog of Lemma~\ref{lem: K initial}~(ii), \(P_{t, x}\)-a.s. 
\[
v (\rho, X_\rho) = E^{P_{t, x}} \Big[ f (X_\xi) + \int_{\rho (\, \cdot \,)}^\xi \int g (\lambda, X_v) \, M (dv, d \lambda) \mid \cG_\rho \Big],
\] 
and consequently,
\begin{align*}
E^{P_{t, x}} \big[ \Gamma_\rho \big] &= E^{P_{t, x}} \Big[ E^{P_{t, x}} \Big[ f (X_\xi) + \int_{\rho (\, \cdot \,)}^\xi \int g (\lambda, X_v) \, M (dv, d \lambda) \mid \cG_\rho \Big] + \int_t^\rho \int g (\lambda, X_v) \, M (dv, d \lambda) \Big] 
\\&= v (t, x), 
\end{align*} 
where \(\rho (\, \cdot \,)\) means that the integration refers to the outer expectation. This proves the claimed martingale property. Using this martingale property, the Markov property of the family \((P_{s, y} \circ X^{-1})_{(s, y) \in \bR_+ \times \bR^d}\) and Fubini's theorem, we obtain, for all \(t, n\) such that \(t + 1/n \leq \xi\), \(P_{t, x}\)-a.s. 
\begin{align*}
E^{P_{t, X_t}} \big[ v (t + 1/n, X_{t + 1/n}) \big] &= E^{P_{t, x}} \big[ v (t + 1/n, X_{t + 1/n}) \mid X^{-1} (\cF_t) \big] 
\\&= v (t, X_t) - \int_t^{t + 1/n} E^{P_{t,x}} \Big[ \int g (\lambda, X_s) \, \mathsf{m}_s (d \lambda) \mid X^{-1} (\cF_t) \Big] \, ds.
\end{align*} 
Now, similar to the proof of Theorem~\ref{theo: main continuity}, it follows that the equation \eqref{eq: g^' time} holds with
\[
g^* (t, x) := \begin{cases}\liminf_{n \to \infty} n \big( v (t, x) - E^{P_{t, x}} \big[ v (t + 1/n, X_{t + 1/n}) \big] \big), & \text{if the \(\liminf\) exists}\\ &\text{in \(\{z \colon \|z\| \leq \|g\|_\infty\}\)}, \\ 0, & \text{otherwise}. \end{cases} 
\] 
We are ready to finish the proofs of the Theorems~\ref{theo: semigroup} and \ref{theo: strong Feller}.
Fix \(\widehat{x} \in \bR^d\) and \(R < \sqrt{\xi}/2\).
For \((t, x) \in C_{2R} (\widehat{x}) := [0, 4R^2) \times B_{2R} (\widehat{x})\), set 
\begin{align*}
\widehat{f} (t, x) &:= E^{P_{t, x}} \Big[ f (X_\xi) + \int_t^\xi g^* (s, X_s) \, ds \Big], \\
\widehat{\tau}_{t, x} &:= \inf \{ s \geq 0 \colon (t + s, X_{t + s}) \not \in C_{2R} (\widehat{x}) \},
\end{align*}
and 
\[
\widehat{v} (t, x) := E^{P_{t, x}} \Big[ \widehat{f} (t + \widehat{\tau}_{t, x}, X_{t + \widehat{\tau}_{t, x}} ) + \int_0^{\widehat{\tau}_{t, x}} g^* (t + s, X_{t + s}) \, ds \Big]. 
\] 
By the strong Markov property of \((P_{s, y} \circ X^{-1})_{(s, y) \in \bR_+ \times \bR^d}\) and the fact that \(t + \widehat{\tau}_{t, x} \leq 4 R^2 < \xi\), we obtain 
\begin{align*}
	\widehat{v} (t, x) &= E^{P_{t, x}} \Big[ E^{P_{t + \widehat{\tau}_{t, x}, X_{t + \widehat{\tau}_{t, x}}}} \Big[ f (X_\xi) + \int_{t + \widehat{\tau}_{t, x} (\, \cdot \,)}^\xi g^* (s, X_s) \, ds \Big] + \int_0^{\widehat{\tau}_{t, x}} g^* (t + s, X_{t + s}) \, ds  \Big] 
	\\&= E^{P_{t, x}} \Big[ E^{P_{t, x}} \Big[ f (X_\xi) + \int_{t + \widehat{\tau}_{t, x}}^\xi g^* (s, X_s) \, ds \mid X^{-1}(\cF_{t + \widehat{\tau}_{t, x}}) \Big] + \int_t^{t + \widehat{\tau}_{t, x}} g^* (s, X_s) \, ds\Big] 
	\\&= E^{P_{t, x}} \Big[ f (X_\xi) + \int_t^\xi g^* (s, X_s) \, ds \Big], 
\end{align*} 
where \(\widehat{\tau}_{t, x} (\, \cdot \,)\) means that the integration is taken w.r.t. the outer expectation. Hence, we have \(\widehat{v} (0, x) = S_\xi (f) (x)\), meaning that continuity properties of \(\widehat{v}\) propagate to \(S_\xi (f)\).
The proof of the following lemma is delegated to the end of this section.
\begin{lemma} \label{lem: parabolic Holder}
	There are constants \(\alpha = \alpha (d, \delta, \|\b\|_{L_d(\bR^d)}) \in (0, 1)\) and \(C = C (d, \delta, \|\b\|_{L_d (\bR^d)}) > 0\) such that, for every \(R > 0\), 
	\[
	|\widehat{v} (t, x) - \widehat{v} (s, y) | \leq C \, R^{- \alpha} \, \Big( |t - s|^{1/2} + \|x - y\| \Big)^\alpha \, \Big( \sup_{\overline{C}_R (\widehat{x})} | \widehat{v} | + R^2 \sup_{C_{2R} (\widehat{x})} | g^* | \Big),
	\] 
	for all \((t, x), (s, y) \in C_R (\widehat{x})\). 
\end{lemma}

Lemma~\ref{lem: parabolic Holder} yields that the function \(\widehat{v}\) is H\"older continuous on \(C_{R} (\widehat{x})\).
As \(\widehat{x}\) was arbitrary, we conclude that \(S_\xi (f) \in C_b (\bR^d)\), finishing the proof of Theorem~\ref{theo: semigroup}.
As we have only assumed that \(f\) is lower semicontinuous, the argument also proves Theorem~\ref{theo: strong Feller}.
\qed

\medskip 

We now discuss Lemma~\ref{lem: parabolic Holder}. It follows from the following two lemmata verbatim as \cite[Theorem~6.6]{krylov_PTRF_21} follows from \cite[Corollary~4.9, Lemma~6.4]{krylov_PTRF_21}, see also \cite[Theorem~6.5]{krylov_PTRF_21}. We do not repeat these arguments here and focus on the key lemmata. 
The following is a version of \cite[Corollary~4.9]{krylov_PTRF_21} tailored to our setting. It will be used again in Section~\ref{sec: pf parabolic viscosity} below. 

\begin{lemma} \label{lem: main estimate ito}
	Let \(Y\) be an \(\bR^d\)-valued It\^o process of the form
	\begin{align} \label{eq: Y dynamics} 
		Y = \int_0^\cdot b_s \, ds + \int_0^\cdot \sigma_s \, d W_s, \quad \|b_s\| \leq \b (Y_s), \quad \sigma_s \sigma^*_s \in \mathbb{S}_\delta,
	\end{align} 
	for a \(d\)-dimensional Brownian motion \(W\).
	Then, there are constants \(C = C (d, \delta, \|\b\|_{L_d(\bR^d)}) > 0\)  and \(\mu = \mu (d, \|\b\|_{L_d (\bR^d)}) > 0\) such that, for every Borel function \(f \colon \bR_+ \times \bR^d \to \bR_+\) and \(T > 0\),
	\[
	E \Big[ \int_0^T f (s, Y_s) \, ds \Big] \leq C T^{d / (2 d + 2)} \| \Phi_T f \|_{L_{d + 1} ([0, T] \times \bR^d)}, 
	\]
	where \(\Phi_T(x) = \exp \{ - \mu \|x\| / \sqrt{T}\}\). 
\end{lemma}
\begin{proof}
	Using \cite[Theorem~4.7]{krylov_21}, we obtain that 
	\begin{align*}
		E \Big[ \int_0^T f (s, Y_s) \, ds \Big] &\leq C \int_0^T e^{- \frac{1}{T} \int_0^t \on{tr} [ \sigma_s \sigma^*_s ] \, ds } f (t, Y_t) \on{det} [ \sigma_t \sigma^*_t]^{1/(d + 1)} \, dt 
		\\&\leq C T^{d / (2 d + 2)} \| \Phi_T f \|_{L_{d+1} ([0, T] \times \bR^d)}. \qedhere 
	\end{align*}
\end{proof}

The next lemma is a tailored version of \cite[Lemma~6.4]{krylov_PTRF_21} for our purpose, see also \cite[Lemma~4.3]{krylov_PA_23}.  

\begin{lemma} 
Take \(R > 0\) and a bounded Borel function \(k \colon \overline{C}_{2R} (\widehat{x}) \to \bR\). 
There are constants \(\alpha = \alpha (d, \delta, \|\b\|_{L_d(\bR^d)}) \in (0, 1)\) and \(C = C (d, \delta, \|\b\|_{L_d(\bR^d)}) > 0\) such that
\[
\big| E^{P_{t, x}} \big[ k (t + \widehat{\tau}_{t, x}, X_{t + \widehat{\tau}_{t, x}}) \big] - E^{P_{s, y}} \big[ k (s + \widehat{\tau}_{s, y}, X_{s + \widehat{\tau}_{s, y}}) \big] \big| \leq C R^{- \alpha}\Big( |t - s|^{1/2} + \|x - y\| \Big)^\alpha \|k\|_\infty
\] 
for all \((t, x), (s, y) \in C_R (\widehat{x})\). 
\end{lemma} 
\begin{proof} 
The proof follows similar to \cite[Lemma~4.3]{krylov_PA_23}, see also \cite[Lemma~6.4]{krylov_PTRF_21}, once we have an estimate as in \cite[Theorem~3.10]{krylov_PA_23}. We now explain a path in the literature that leads to this estimate. 
Its proof relies on \cite[Theorem~2.1]{krylov_PA_23} and \cite[Corollary~3.6]{krylov_PA_23}. Instead of \cite[Theorem~2.1]{krylov_PA_23}, we can use  \cite[Theorem~4.17]{krylov_AOP_21}, which applies to our framework. \cite[Corollary~3.6]{krylov_PA_23} follows from \cite[Theorem~3.5]{krylov_PA_23}, which relies on \cite[Lemmata~3.1--3.4]{krylov_PA_23}. The first lemma is an analytic result that requires no modification. The \cite[Lemmata~3.2--3.4]{krylov_PA_23} can be replaced by appropriate versions of \cite[Lemmata~5.2, 5.3]{krylov_PTRF_21}. Their proofs carry over to our setting: To start with \cite[Lemma~5.2]{krylov_PTRF_21}, the proof relies on \cite[Lemma~2.13]{krylov_21}, which also applies to our setting, and \cite[Corollary~4.9]{krylov_PTRF_21}, which can be replaced by Lemma~\ref{lem: parabolic Holder}. For \cite[Lemma~5.3]{krylov_PTRF_21}, beside the previous \cite[Lemma~5.2]{krylov_PTRF_21}, only \cite[Theorem~3.9]{krylov_PTRF_21} is needed and this restatement of \cite[Theorem~4.17]{krylov_AOP_21} also holds for our setting.
\end{proof} 

\subsection{Proof of Theorem~\ref{theo: joint regu semigroup}}
(a) 
This result follows from good estimates in time uniformly in space, similar to the proof of \cite[Theorem~2.36]{CN22b}. 
By the continuity in space that is provided by Theorem~\ref{theo: semigroup}, it suffices to prove that, for every compact set \(K \subset \bR^d\), 
\[
\lim_{s \to t} \sup_{x \in K}| S_s (f) (x) - S_t (f) (x) | = 0.
\] 
We have 
\begin{align*}
	\sup_{x \in K}| S_s (f) (x) - S_t (f) (x) | &\leq \sup_{x \in K} \sup_{P \in \cK (x)} E^P \big[ | f (X_s) - f (X_t) | \big] + \|g\|_\infty |s - t|.
\end{align*}
Let us investigate the first term. In the following, \(C\) denotes a generic constant that only depends on \(d, \delta\) and \(\|\b\|_{L_d (\bR^d)}\). As usual, the constant may change from line to line.
Take a large time horizon \(T > 0\) such that \(s, t \leq T\). By \cite[Corollary~1.2]{krylov_21} and Chebyshev's inequality, for all \(R > 0\), 
\[
\sup_{x \in K} \sup_{P \in \cK(x)} P \Big( \sup_{v \in [0, T]} \| X_v \| > R \Big) \leq \frac{C}{R}.
\] 
For every \(\varepsilon > 0\), using once again \cite[Corollary~1.2]{krylov_21} and Chebyshev's inequality, we obtain that 
\begin{align*}
	\sup_{x \in K} &\sup_{P \in \cK (x)} E^P \big[ | f (X_s) - f (X_t) | \big] 
	\\&\leq \frac{2 C \|f\|_\infty}{R} + \sup_{x \in K} \sup_{P \in \cK (x)} E^P \big[ | f (X_s) - f (X_t) | \1_{\{\sup_{v \in [0, T]}\|X_v\| \leq R\}} \big]
	\\&\leq \frac{2 C \|f\|_\infty}{R} + \sup\, \{ | f (x) - f(y)| \colon x, y \in B_R, \|x -y\| \leq \varepsilon \} + \sup_{x \in K} \sup_{P \in \cR(x)} P ( \|X_s - X_t\| > \varepsilon )
	\\&\leq \frac{2 C \|f\|_\infty}{R} + \sup\, \{ | f (x) - f(y)| \colon x, y \in B_R, \|x -y\| \leq \varepsilon \} + \frac{C |t - s|^{1/2}}{\varepsilon}.
\end{align*}
Now, choosing first \(R > 0\) large and then \(\varepsilon > 0\) small yields that 
\[
\sup_{x \in K} \sup_{P \in \cK (x)} E^P \big[ | f (X_s) - f (X_t) | \big] 
\]
can be may arbitrarily small as \(s \to t\). This completes the proof of (a).

\smallskip 
(b), (c) Suppose now that \(f\) is bounded and uniformly continuous with concave, increasing modulus of continuity \(w_f\). Then, we deduce from \cite[Corollary~1.2]{krylov_21} and Jensen's inequality that 
\begin{equation} \label{eq: time holder}
	\begin{split} 
	\sup_{x \in \bR^d}| S_s (f) (x) - S_t (f) (x) | &\leq \sup_{x \in \bR^d} \sup_{P \in \cK (x)} E^P \big[ | f (X_s) - f (X_t) | \big] + \|g\|_\infty |s - t|
	\\&\leq \sup_{x \in \bR^d} \sup_{P \in \cK (x)} w_f \Big( E^P \big[  \| X_s - X_t \| \big] \Big) + \|g\|_\infty |s - t|
	\\&\leq w_f ( C |s - t|^{1/2}) + \|g\|_\infty |s - t|, 
\end{split}
\end{equation} 
where \(C\) only depends on \(d, \delta\) and \(\|\b\|_{L_d (\bR^d)}\). 
This gives us good control over the time domain. 
To complete the proof, we also need good control over the space domain. 
Take a time horizon \(T > 0\), \(0 \leq t_1 \leq t_2 \leq T\), \(x_1, x_2 \in \bR^d\) and set \(\rho := |t_1 - t_2| + \| x_1 - x_2 \|\) and \(R := \sqrt{\rho}\). 
We assume that \(\rho < 1\) and distinguish two cases:
\begin{enumerate}
\item[(i)] \(t_2 > 4 R^2\); 
\item[(ii)] \(t_2 \leq 4 R^2\).
\end{enumerate}
For (i), we have \(R < \sqrt{t_2} / 2\) and 
\(
\| x_1 - x_2 \| \leq \rho < \sqrt{\rho} = R.
\) 
Hence, \eqref{eq: time holder} and Lemma~\ref{lem: parabolic Holder} yield
\begin{align*}
	| S_{t_1} (f) (x_1) - S_{t_2} (f) (x_2) | &\leq | S_{t_1} (f) (x_1) - S_{t_2} (f) (x_1) | + | S_{t_2} (f) (x_1) - S_{t_2} (f) (x_2) | 
	\\&\leq w_f (C \rho^{1/2}) + C \rho + C R^{- \alpha} \rho^{\alpha} \leq w_f (C \rho^{1 /2 }) + C \rho^{\alpha / 2}, 
\end{align*}
where the constant \(C\) depends on \(d, \delta, \|\b\|_{L_d(\bR^d)}, \|f\|_\infty, \|g\|_\infty\) and the time horizon \(T\).

\noindent
For (ii), \eqref{eq: time holder} yields that 
\begin{align*}
	| S_{t_1} (f) (x_1) - S_{t_2} (f) (x_2) | 
	&\leq | S_{t_1} (f) (x_1) - f (x_1) | + | f (x_1) - f (x_2) | + | f (x_2) - S_{t_2} (x_2) | 
	\\&\leq w_f (C t_1^{1 / 2}) + C t_1 + w_f (\rho ) + w_f (Ct_2^{1 / 2}) + C t_2
	\\&\leq C w_f (C \rho^{1 / 2}) + C \rho^{\alpha / 2}, 
\end{align*}
where the constant \(C\) depends on \(d, \delta\) and \(\|\b\|_{L_d(\bR^d)}\).

Putting these estimates together yields the claim of (b). Moreover, taking \(w_f (x) = C x^\beta\) yields the claim of (c).
\qed 

\section{Proof of Theorem \ref{theo: parabolic PDE}} \label{sec: pf parabolic viscosity}
Similar to Theorem~\ref{theo: main continuity}, the main tools for the proof are an It\^o formula for Sobolev functions and a weak existence theorem for certain stochastic differential equations with time-inhomogeneous coefficients. Since we could not locate suitable results in the literature, we provide them here.

\subsection{It\^o formula} \label{sec: ito fomula}
The following It\^o formula can be proved similar to \cite[Theorem~1.3]{krylov_AOP_21}, using Lemma~\ref{lem: main estimate ito} in the same way as \cite[Theorem~2.7]{krylov_AOP_21}, see \cite[Theorem~4.3]{krylov_AOP_21_2} for a more general result in a time-homogeneous Markovian framework. 

\begin{theorem} \label{theo: ito formula}
	Let \(Y\) be as in \eqref{eq: Y dynamics} and take \(u \in W^{1, 2}_{d + 1, \textup{loc}} ([0, T] \times \bR^d)\). Then, 
	\[
	d u (t, Y_t) = \Big( \partial_t u (t, Y_t) + \tfrac{1}{2} \on{tr} \big[ \sigma_t \sigma^*_t \nabla^2 u (t, Y_t) \big] + \langle b_t, \nabla u (t, Y_t) \rangle \Big) \, dt + \sum_{i, k = 1}^d \sigma^{ik}_t \partial_{x_i}  u (t, Y_t) \, d W^k_t,  
	\]
	where the stochastic It\^o integral is a square-integrable martingale on \([0, T \wedge \tau_U]\) for every bounded domain \(U \subset \bR^d\).
\end{theorem} 

\vspace{-0.5cm}

\subsection{Weak existence for SDEs with time-dependent coefficients} \label{sec: weak existence} 
The following weak existence result can be proved similar to \cite[Theorem~1.1]{krylov_AOP_21}, using Lemma~\ref{lem: main estimate ito} in the same way as \cite[Theorem~2.7]{krylov_AOP_21}.

\begin{theorem} \label{theo: weak existence} 
Let \(\mu \colon \bR_+ \times \bR^d \to \bR^d\) and \(\sigma \colon \bR_+ \times \bR^d \to \bR^{d \times d}\) be Borel functions such that 
\[
\| \mu (t, x) \| \leq \b (x), \quad \sigma (t, x) \sigma^* (t, x) \in \mathbb{S}_\delta, 
\]
for all \((t, x) \in \bR_+ \times \bR^d\). Then, for every \(x \in \bR^d\), there exists a weak solution to the SDE 
\[
d Y_t = \mu (t, Y_t) \, dt + \sigma (t, Y_t) \, d W_t, \quad Y_0 = x.
\]
\end{theorem} 
\smallskip 
\subsection{Proof of Theorem~\ref{theo: parabolic PDE}}
Continuity is implied by Theorem~\ref{theo: joint regu semigroup}. Thus, it suffices to prove the \(L_{d + 1}\)-viscosity sub- and supersolution properties. We restrict our attention to the subsolution property, the supersolution property can be established as in Step 2 from the proof of Theorem~\ref{theo: main continuity}. 
Fix \((t_0, x_0) \in [0, T) \times \bR^d, r \in (0, T - t_0)\) and take \(\varphi \in W^{1, 2}_{d + 1, \textup{loc}} ([0, T] \times \bR^{d})\) such that \(u (t_0, x_0) = \varphi (t_0, x_0)\) and \(u < \varphi\) on \((t_0, t_0 + r) \times \overline{B}_{r} (x_0)\). 
By part (b) of our Standing Assumption~\ref{SA: main1},
\[
(\lambda, t, x) \mapsto L (\lambda, x, \nabla \varphi (t_0 + t, x), \nabla^2 \varphi (t_0 + t, x)) + g (\lambda, x)
\] 
is continuous in the first and measurable in the last two variable. Hence, as \(\Lambda\) is compact, the measurable maximum theorem \cite[Theorem~18.19]{charalambos2013infinite} yields the existence of a measurable map \((t, x) \mapsto \lambda (t, x)\in \Lambda\) such that 
\begin{align*} 
L (\lambda (t, x), x, \nabla \varphi (t_0 + t, x), \nabla^2 \varphi (t_0 + t, x)) + g (\lambda (t, x), x) = H (x, \nabla \varphi (t_0 + t, x), \nabla^2 \varphi (t_0 + t, x))
\end{align*} 
for all \((t, x) \in [0, T - t_0] \times \bR^d\). 
Recalling Theorem~\ref{theo: weak existence}, let \(Y\) be a solution process for the SDE  
\begin{align*}
	d Y_t = b (\lambda (t \wedge (T - t_0), Y_t), Y_t) \, dt + \sqrt{a (\lambda (t \wedge (T - t_0), Y_t), Y_t)}  \, d W_t, \quad Y_0 = x_0, 
\end{align*} 
and let \(P'\) be the law of 
\((Y, \delta_{\lambda(t \wedge (T - t_0), Y_{t})} (d \lambda) \, dt),\) seen as a probability measure on the canonical product space \((\Omega \times \m, \cF\otimes \mathcal{M})\), which is clearly an element of \(\cK(x_0)\).
By the dynamic programming principle \eqref{eq: para DPP}, Theorem~\ref{theo: ito formula} and \cite[Theorem~4.1]{krylov_21}, with \(T'_r := \inf \{ s \geq 0 \colon X_s \not \in B_r (x_0)\} \wedge r\),  
\begin{align*}
	0 &= \inf_{Q \in \cK (x_0)} E^Q \Big[ u (t_0 + T'_r, X_{T'_r}) + \int_0^{T'_r} \int g (\lambda, X_s) \, M (ds, d \lambda) \Big] - u (t_0, x_0)
	\\&< E^{P'} \Big[ \varphi (t_0 + T'_r, X_{T'_r}) - \varphi (t_0, x_0) + \int_0^{T'_r} \int g (\lambda, X_s) \, M (ds, d \lambda) \Big] 
	\\&= E^{P'} \Big[ \int_0^{T'_r}  \big(L (\lambda (s, X_s), X_s, \nabla \varphi (t_0 + s, X_s), \nabla^2 \varphi (t_0 + s, X_s)) \\&\hspace{5cm} + \partial_t \varphi (t_0 + s, X_s) + g (\lambda (s, X_s), X_s) \big) \, ds \Big]
	\\&= E^{P'} \Big[ \int_0^{T'_r} \big( H (X_s, \nabla \varphi (t_0 + s, X_s), \nabla^2 \varphi (t_0+s, X_s)) + \partial_t \varphi (t_0 + s, X_s) \big) \, ds \Big]
	\\&\leq C\, \| (H (\, \cdot \,, \nabla \varphi, \nabla^2 \varphi) + \partial_t \varphi)_+ \|_{L_{d + 1} ([t_0, t_0 + r) \times B_r (x_0))}, \quad y_+ := \max \{y, 0\}, \phantom \int 
	\\&\leq C\, \underset{[t_0, t_0 + r) \times B_r (x_0)}{\on{ess \, sup}} \, (H (\, \cdot \,, \nabla \varphi, \nabla^2 \varphi) + \partial_t \varphi)_+, \phantom \int 
\end{align*}
where the constant \(C > 0\) depends on \(d, \delta, \|\b\|_{L^d (\bR^d)}\) and \(T\), and it changed from the second to last line.
As a consequence, 
\[
\underset{[t_0, t_0 + r) \times B_r (x_0)}{\on{ess \, sup}} \, (H (\, \cdot \,, \nabla \varphi, \nabla^2 \varphi) + \partial_t \varphi) > 0. 
\] 
This proves that \(u\) is an \(L_{d + 1}\)-subsolution.

\smallskip
As mentioned above, the \(L_{d + 1}\)-supersolution property follows similar to Step 2 from the proof of Theorem~\ref{theo: main continuity}. We omit the details here for brevity. \qed

\bibliographystyle{abbrv}
\bibliography{references}
\end{document}